\documentclass[twoside,reqno,a4paper,10pt]{amsart}
\usepackage[T1]{fontenc}
\usepackage[utf8]{inputenc}
\usepackage{color}
\usepackage{verbatim,mathrsfs,hyperref,cite}
\usepackage{amstext}
\usepackage{amsthm,amsfonts}
\usepackage{stmaryrd}
\usepackage{setspace}
\usepackage{epsfig}
\usepackage{esint}

\makeatletter


\numberwithin{equation}{section}
\numberwithin{figure}{section}
\theoremstyle{plain}
\newtheorem{thm}{\protect\theoremname}[section]
  \newtheorem{rem}{\protect\remarkname}[section]
  \newtheorem{lem}{\protect\lemmaname}[section]
  \newtheorem{defn}{\protect\definitionname}[section]


\@ifundefined{definecolor}
 {\usepackage{color}}{}


\textwidth=15.0cm
\textheight=23cm
\hoffset=-1.5cm
\voffset=0cm


\makeatletter
\newcommand{\Rmnum}[1]{\expandafter\@slowromancap\romannumeral#1@}\makeatother


\numberwithin{equation}{section}

\allowdisplaybreaks


\newcommand{\set}[1]{\left\{#1\right\}}

\newcommand{\pr}[1]{\left(#1\right)}

\newcommand{\defs}{:=}



\DeclareMathOperator*{\dist}{dist}


\newcommand{\dif}{\mathrm{d}}

\DeclareSymbolFont{lettersA}{U}{pxmia}{m}{it}
\DeclareMathSymbol{\piup}{\mathord}{lettersA}{"19}


\makeatother

\makeatother

\usepackage{babel}
  \providecommand{\definitionname}{Definition}
  \providecommand{\lemmaname}{Lemma}
  \providecommand{\remarkname}{Remark}
\providecommand{\theoremname}{Theorem}

 \begin{document}
\title[Transonic shocks with large gravity]
{Transonic Shocks for 2-D Steady Euler Flows with Large Gravity in a Nozzle for Polytropic Gases}

\author{Beixiang Fang}
\author{Xin Gao}
\author{Wei Xiang}
\author{Qin Zhao}

\address{B.X. Fang: School of Mathematical Sciences, MOE-LSC, and SHL-MAC, Shanghai Jiao Tong University, Shanghai 200240, China }
\email{\texttt{bxfang@sjtu.edu.cn}}

\address{X. Gao: Department of Applied Mathematics, The Hong Kong Polytechnic University, Hong Kong, China}
\email{\texttt{xingao@polyu.edu.hk}}

\address{W. Xiang: Department of Mathematics, City University of Hong Kong, Kowloon, Hong Kong, China}
\email{\texttt{weixiang@cityu.edu.hk }}

\address{Q. Zhao: School of Mathematics and Statistics, Wuhan University of Technology, Wuhan 430070, China}
\email{\texttt{qzhao@whut.edu.cn}}

 \keywords{steady Euler system; transonic shocks; polytropic gases; large gravity; receiver pressure; existence.}
\subjclass[2010]{35J56, 35L65, 35L67, 35M30, 35M32, 76L05, 76N10}

\date{\today}

\begin{abstract}
In this paper, we are concerned with the existence of transonic shock solutions for two-dimensional (2-d) steady Euler flows of polytropic gases with the vertical gravity in a horizontal nozzle under a pressure condition imposed at the exit of the nozzle.
The acceleration of the gravity $g$ is assumed to take a generic value.
We first show that the existence of special transonic shock solutions with the flow states depending only on the variable in the gravity direction can be established if and only if the Mach number of the incoming flow satisfies certain conditions.
However, the shock position of the special solutions is arbitrary in the nozzle.
We determine the shock position and establish the existence of transonic shock solution when the boundary data are small perturbations of the special shock solutions under certain conditions.
Mathematically, the perturbation problem can be formulated as a free boundary problem of a nonlinear system of hyperbolic-elliptic mixed type and composite. Key difficulties in the analysis mainly comes from the vertical gravity. Methods and techniques are developed in this paper to deal with these key difficulties.
Finally, it turns out that the vertical gravity plays a dominant role in the mechanism determining the shock position.
\end{abstract}

\maketitle


\section{Introduction}
In this paper, we are concerned with the existence of transonic shocks, especially position of the shock front for 2-d steady Euler flows with a vertical large gravity in an almost flat nozzle for polytropic gases (see Figure \ref{fig1}).
The steady flow satisfies the following equations 
\begin{align}
&\partial_x (\rho u) + \partial_y (\rho v)=0,\label{E1}\\
&\partial_x (\rho u^2 + p) + \partial_y (\rho u v)=0,\\
&\partial_x (\rho u v ) + \partial_y (\rho v^2 + p) = - \rho g,\label{E3}\\
&\partial_x \big(\rho u(\frac12 (u^2+v^2) +i) \big) + \partial_y \big(\rho v(\frac12 (u^2+v^2) +i) \big) = -\rho g v,\label{E4}
\end{align}
where $u$, $v$, $\rho$, $p$, and $i$ are the horizontal velocity, vertical velocity, density, pressure, and enthalpy respectively. The constant $g>0$ is the acceleration of gravity. Moreover, the flow is supposed to be a polytropic gas with the thermal relation
\begin{align}\label{stateP}
p=Ae^{\frac{S}{c_v}}{\rho}^{\gamma}
\qquad\mbox{and}\qquad
i = \displaystyle\frac{\gamma p}{(\gamma -1)\rho},
 \end{align}
where $S$ is the entropy, $c_v$ is the specific heat at constant volume, $\gamma>1$ is the adiabatic exponent, and $A$ is a positive constant. 
The sound speed is given by $c=\sqrt{{\gamma p}/{\rho}}$. The flow is supersonic if the Mach number $M\defs \sqrt{u^2+v^2}/c>1$, and subsonic if $M<1$.
\begin{figure}[!ht]
	\centering
	\includegraphics[width=0.45\textwidth]{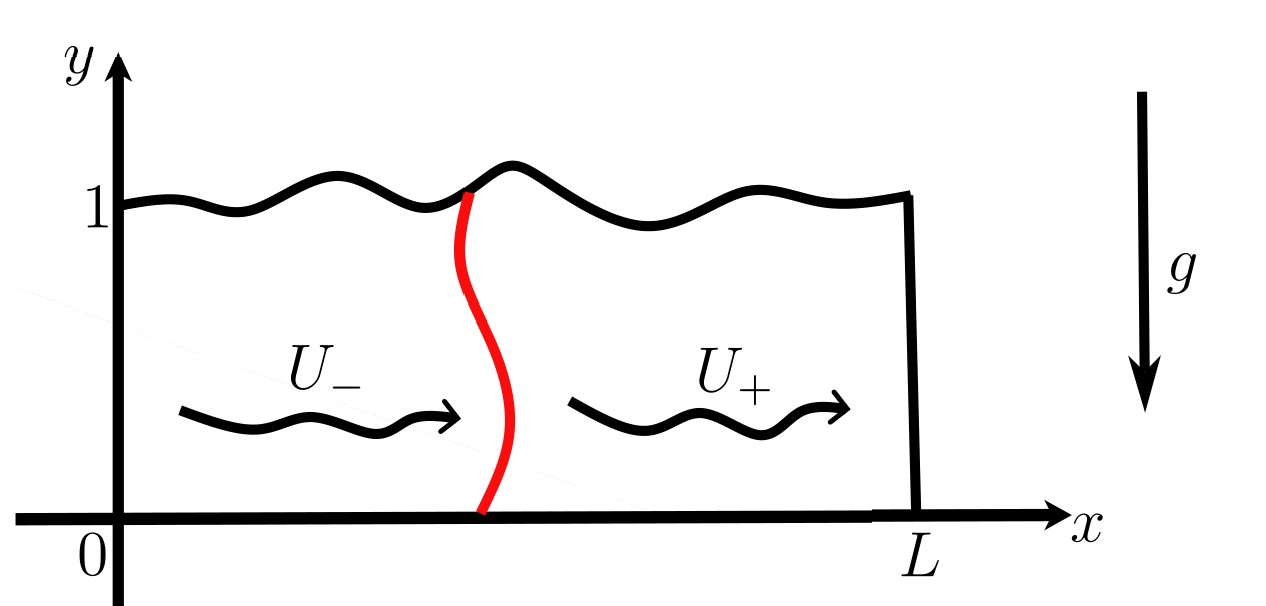}
	\caption{Transonic flow with a vertical gravity in a nozzle.\label{fig1}}
\end{figure}

In this paper, we will show the existence of the wave pattern with a single shock front in a finite nozzle for polytropic gases (see Figure \ref{fig1}) and determine the single shock location, for a generic value of $g$.
As proposed by Courant-Friedrichs in \cite{CR}, the pressure condition will be imposed at the exit of the nozzle.
In \cite{FG2022}, Fang-Gao have dealt with the problem for isothermal gases with sufficiently small gravity acceleration $g$. Under these two assumptions, the special shock solution is trivial and the zero-order terms in the Euler system can be treated simply as perturbation terms.
Then a transonic shock solution with the determined shock front position can be established when the boundary data at the entry and the exit of the nozzle are slightly perturbed from a given special solution under certain conditions.

However, for polytropic gases, special transonic shock solutions cannot be simply established as done in \cite{FG2022} . Moreover, as $ g $ takes a generic value, the zero-order terms in the Euler system \eqref{E1}-\eqref{E4} cannot be treated as perturbation terms.
Hence, the ideas in \cite{FG2022} fail to deal with the problem in this paper for polytropic gases and a generic value of $ g $.
New ideas and methods are developed to establish the existence of transonic shock solutions with a determined position of the shock front.
Different from isothermal gases, for which special shock solutions can be trivially established so that no additional conditions are needed to investigate on the Mach number of the incoming flow entering, for the polytropic gases, the special transonic shock solutions can be established if and only if the Mach number of the incoming flow ahead of the shock front satisfies certain conditions.
Such necessary and sufficient conditions will be given in Theorem \ref{BGthmxy}.

Based on the special transonic shock solutions, we will  determine the location of the shock front and establish the existence of shock solutions provided that the states at the entry, the pressure at the exit, and the nozzle walls are slightly perturbed.
Mathematically, it can be formulated as a free boundary value problem in Definition \ref{def1} with the shock front being the free boundary for a nonlinear system which changes type across the shock front: it is hyperbolic ahead of it, while it is hyperbolic-elliptic composite, with strongly coupled lower order terms, behind it. Since $g$ is no longer small, the elliptic part and the hyperbolic part of the Euler system for the subsonic flow behind the shock front are strongly coupled with the lower-order terms involved with the gravity. They cannot be simply decoupled.
Thus, new difficulties arise in solving the nonlinear free boundary problem as well as determining the position of the shock front.

Different from \cite{FGXZ2025}, gravity is an external force, which leads to a different plausible mechanism for determining the shock position. Mathematically, in this paper the $C^{0,\alpha}$-regularity for the subsonic solution behind the shock front is optimal at the intersection point between the shock front and the nozzle wall, since the nozzle wall is a small perturbation of a straight line. This low regularity leads to the non-uniqueness of the streamline for transport equations in the subsonic domain behind the shock front in Eulerian coordinates.
Therefore, different from \cite{FGXZ2025}, this problem needs to be solved in the Lagrangian coordinates. New techniques need to be developed in the Lagrangian coordinates. 

In order to establish the existence of transonic shock solutions to the free boundary problem in Lagrangian coordinates, one of the key difficulties is solving the nonlinear free boundary problem in a weighted H\"{o}lder norm space for elliptic equations with non-constant coefficients and non-local terms.
To achieve this, a specific free boundary problem for the linearized elliptic-hyperbolic composite system with non-constant coefficients and lower-order terms is formulated.
To solve this linearized problem,
one of the key points is to introduce an auxiliary problem, which provides an effective method for dealing with the lower-order terms involving with the generic value of $g$. If the linearized problem admits a solution, the boundary condition must satisfy a solvability condition, related to the large gravity and the special normal shock solution. A plausible mechanism for determining the admissible shock position is observed from the analysis of the solvability condition, which shows the large gravity plays a dominant role. Based on the above arguments, a delicate nonlinear iteration scheme is designed to the nonlinear problem.

Thanks to steady efforts made by many mathematicians, there have been plenty of results on gas flows with shocks in nozzles, for instance, see \cite{BF,GCM2007,GK2006,GM2003,GM2004,
GM2007,SC2005,Chen_S2008TAMS,SC2009,CY2008,FG2021,FG2022,FGXZ2025,FHXX2023,FLY2013,FX2021,LXY2009CMP,
LiXinYin2010PJM,LXY2013,LiuYuan2009SIAMJMA,LiuXuYuan2016AdM,
ParkRyu_2019arxiv,Park2021,PeiXiang2022,WXX2021,
XinYanYin2011ARMA,XinYin2005CPAM,XinYin2008JDE,
Yuan2012,YZ2020} and references therein. In \cite{CR}, Courant and Friedrichs first gave a systematic study of this problem from the viewpoint of nonlinear partial differential equations.
They pointed out that the position of the shock front cannot be determined unless additional conditions are imposed at the exit and the pressure condition was suggested (see \cite[P.373-P.374]{CR}). Since then, many nonlinear PDE models and different kinds of boundary conditions have been proposed to establish a rigorous mathematical analysis for this problem. This has led to the development of fruitful ideas and methods, resulting in significant progress in the field. There have a relatively systematic and complete theory for the quasi-one-dimensional model, see, for instance \cite{Liu1982ARMA,Liu1982CMP,EmbidGoodmanMajda1984,Lax,RauchXieXin2013JMPA} and references therein.
For the steady multi-dimensional case without exterior forces, two typical kinds of nozzles are studied. One is a perturbation of an expanding nozzle of an angular sector or a diverging cone, and the other one is a perturbation of a flat nozzle with two parallel walls. See \cite{SC2009,GK2006,GM2003,LXY2009CMP,LXY2013,XinYanYin2011ARMA,XinYin2005CPAM,
XinYin2008JDE,FX2021} and references therein.
See also \cite{FG2022,FSZ2023,GLY2020,YZ2020} for the stability of transonic shocks with small gravity, exothermic, heat transfer, or friction in nozzles.

The rest of the paper is organized as follows.
In Section 2, the free boundary problem will be formulated, special normal transonic shock solutions will be constructed, and the main theorem will be stated. In Section 3, the nonlinear boundary value problem in the subsonic domain behind the shock front will be reformulated.
In Section 4, a well-posedness theorem for a linearized boundary value problem will be established. This theorem plays a crucial role in solving the nonlinear boundary value problem introduced in Section 3. Based on Section 4, a nonlinear iteration scheme will be designed in Section 5, and Theorem \ref{mainthm3} will be proved.

\section{Free boundary problem and main theorem}
\subsection{Transonic shock solution}
Let $ \varphi_w(x) $ be a given function with $ \varphi_w(0) =1$ and the nozzle is the domain
\begin{equation*}
\mathcal{D} = \{ (x,y): 0 < x < L, 0 < y < \varphi_w(x) \}.
\end{equation*}
Denote the entrance, the exit, the lower boundary and the upper boundary of the nozzle by
\begin{align}
E_0 &= \{(x,y): x=0,\,y \in [0,1]\},\label{E0entrance}\\	
E_L &= \{(x,y): x=L,\, y \in [0, \varphi_w (L)] \},\\
W_0 &= \{(x,y): x \in [0,L], \, y=0 \},\\
W_1 &= \{(x,y): x \in [0,L],\, y=\varphi_w(x) \}.\label{W1Wall}
\end{align}
Let the position of the shock front be
\begin{equation}\label{shockEuler}
E_s\defs\set{(x, y)\in \mathbb{R}^2 :  x = \varphi_s(y),\, 0 < y < y_s},
\end{equation}
where $(y_s, \varphi_s (y_s))$ is the intersection of the shock-front and the upper boundary, i.e. $y_s=\varphi_w(\varphi_s(y_s))$. Then the domain $\mathcal{D}$ is divided into two parts by $E_s$:
\begin{align}
& \mathcal{D}_- = \{ (x, y)\in \mathbb{R}^2 : 0 < x < \varphi_s(y),\, 0 < y < \varphi_w(x)\},\label{mathcalD-}\\
& \mathcal{D}_+ = \{ (x, y)\in \mathbb{R}^2 : \varphi_s(y) < x < L,\, 0 < y < \varphi_w(x) \}.\label{mathcalD+}
\end{align}
$ \mathcal{D}_- $ is the region of the supersonic flow ahead of the shock front, and $ \mathcal{D}_+ $ is the region of the subsonic flow behind it.
It is well known that the following Rankine-Hugoniot (R.-H.) conditions hold on $E_s$.
\begin{align}
&[\rho u] - \varphi_s'[\rho v]=0,\label{RH1}\\
&[\rho u^2 + p] - \varphi_s'[\rho u v]=0,\\
&[\rho u v ] - \varphi_s'[\rho v^2 + p]=0,\\
&[\frac12 (u^2+v^2) + i] = 0,\label{RH4}
\end{align}
where $[\cdot]$ stands for the jump of the corresponding quantity across the shock front.

We consider a supersonic flow with a large gravity enters the nozzle $\mathcal{D}$ , and leaves it with a relatively high pressure on $E_L$. It is expected that a transonic shock front appears in $\mathcal{D}$. Let $\theta = \arctan \displaystyle\frac{v}{u}$ be the flow angle, and let $q= \sqrt{u^{2} + v^{2}}$ be the flow speed. Denote $U\defs (p,\theta,q,S)^\top$.
Hence, in this paper, we consider the existence of transonic shock solutions of the following free boundary value problem and try to determine the shock location function $\varphi_s(y)$.
\begin{defn}[Transonic shock solution]\label{def1}
For given functions $U_{-}^{0}(y) = (p_-^0, \theta_-^0, q_-^0, S_-^0)^{\top}(y)$ and $p_{+}^{L}(y)$,
$ \pr{ U_-(x,y);\ {U}_+(x,y);\ \varphi_s(y)}$ is a transonic shock solution of the following free boundary value problem:
\begin{enumerate}
\item $U_-(x,y) = (p_-,\theta_-,q_-,S_-)^\top(x,y)$ satisfies the Euler equations \eqref{E1}-\eqref{E4} in $\mathcal{D}_-$, and the boundary conditions
\begin{align}\label{eq11}
U_-=& U_{-}^{0}(y) \qquad &\text{on }&  E_0,\\
\theta_- =& 0 \qquad &\text{on }& W_0\cap \overline{\mathcal{D}_-},\label{eq11-}\\
\tan \theta_{-} =& \varphi_w'(x) \qquad &\text{on }& W_1\cap \overline{\mathcal{D}_-};
\end{align}

\item $U_+(x,y)=(p_+,\theta_+,q_+,S_+)^\top(x,y) $ satisfies the Euler equations \eqref{E1}-\eqref{E4} in $ \mathcal{D}_+$, and the boundary conditions
\begin{align}\label{eq14}
\theta_+ =& 0 \qquad &\text{on } & W_0\cap \overline{\mathcal{D}_+},\\
\tan \theta_{+}  =& \varphi_w'(x) \qquad &\text{on } & W_1\cap \overline{\mathcal{D}_+},\\
\label{eq14+}
p_+ =&p_{+}^{L}(y)\qquad &\text{on } & E_L;
\end{align}
	
\item $(U_-; U_+; \varphi_s)$ satisfies the R.-H. conditions \eqref{RH1}-\eqref{RH4} on $E_s $.
\end{enumerate}
\end{defn}

\subsection{Special normal transonic shock solutions}
The objective of this paper is to find a transonic shock solution in the sense of Definition \ref{def1} and determine the shock position. To achieve this, one of the key points is to find special normal shock solutions with large gravity for polytropic gas and $\varphi_w(x) \equiv 1$.
Let  $\mathcal{D}^{\bar{x}_s}=\{ (x,y): 0 < x < L, 0 < y < 1\}$. Let $\bar{x}_s \in (0,L)$ be an arbitrary normal shock location, the domain $\mathcal{D}^{\bar{x}_s}$ is then divided into two parts: the supersonic domain $\mathcal{D}_-^{\bar{x}_s}$ and the subsonic domain $\mathcal{D}_+^{\bar{x}_s}$:
\begin{align}
&\mathcal{D}_-^{\bar{x}_s} \defs \{ (x, y)\in \mathbb{R}^2 : 0 < x < \bar{x}_s,\, 0 < y < 1\},\\
&\mathcal{D}_+^{\bar{x}_s} \defs \{ (x, y)\in \mathbb{R}^2 : \bar{x}_s < x < L,\, 0 < y < 1\}.
\end{align}
In this subsection, we will construct special normal transonic shock solutions with new observations
of the form 
\begin{align}
&\overline{U}_-(y) = (\bar{p}_-, \bar{\theta}_-, \bar{q}_-, \bar{S}_-)^\top(y) \defs
(\bar{p}_-(y), 0, \bar{q}_-(y), \bar{S}_-(y))^\top\qquad\mbox{in }\mathcal{D}_-^{\bar{x}_s} ,\label{BS--thm}\\
&\overline{U}_+(y) =  (\bar{p}_+, \bar{\theta}_+, \bar{q}_+, \bar{S}_+)^\top (y)\defs (\bar{p}_+(y), 0, \bar{q}_+(y), \bar{S}_+(y))^\top\qquad\mbox{in }\mathcal{D}_+^{\bar{x}_s}.\label{BS++thm}
\end{align}
The existence of such special normal transonic shock solutions in the form of \eqref{BS--thm}-\eqref{BS++thm} can be established by proving the following theorem.
\begin{thm}\label{BGthmxy}
For a given $C^3$-function $\bar{q}_-(y)>0$ in $\mathcal{D}_-^{\bar{x}_s}$, the special normal transonic shock solutions of the form $(\overline{U}_- (y);\ \overline{U}_+ (y);\ \bar{x}_s)$ as described in \eqref{BS--thm}-\eqref{BS++thm} with $\overline{M}_-(1)>1$ and $\bar{p}_-(1)>0$, satisfy \eqref{E1}-\eqref{E4} and \eqref{RH1}-\eqref{RH4}, if and only if $\overline{M}_-^2$ satisfies
\begin{align}\label{BS+8}
\frac{d}{dy}\big(\frac{1}{\overline{M}_-^2(y)}\big) = -\frac{g}{\bar{q}_-^2(y)} \frac{\gamma-1}{4}\big(1-\gamma
-\frac{2}{\overline{M}_-^2(y)}+\frac{(\gamma+1)^2\overline{M}_-^2(y)}{2+(\gamma-1)\overline{M}_-^2(y)}
\big)
\end{align}
and
\begin{equation}\label{eq:2.23}
\frac{\dif\bar{p}_-(y)}{\dif y} = -\frac{\gamma \bar{p}_-(y)\overline{M}_-^2(y)}{\bar{q}_-^2(y)}g,
\end{equation}
where $\overline{M}_-^2(y)=\frac{\bar{q}_-^2(y)\bar{\rho}_-(y)}{\gamma \bar{p}_-(y)}$.
Then such solutions $(\overline{U}_- (y);\ \overline{U}_+ (y);\ \bar{x}_s)$ exist.
\end{thm}

\begin{proof}
This proof is divided into two steps.

\emph{Step 1:}
For the solutions of the form $(\overline{U}_- (y);\ \overline{U}_+ (y);\ \bar{x}_s)$ as described in \eqref{BS--thm}-\eqref{BS++thm}, they satisfy \eqref{E1}-\eqref{E4} and $\eqref{RH1}$-$\eqref{RH4}$ if and only if they satisfy
\begin{align}
\frac{\dif\bar{p}_-(y)}{\dif y} =& -\bar{\rho}_-(y)g,
\quad \text{in} \quad \mathcal{D}_-^{\bar{x}_s},
\label{BS-}\\
\frac{\dif\bar{p}_+(y)}{\dif y} =& -\bar{\rho}_+(y)g,
\quad \text{in} \quad \mathcal{D}_+^{\bar{x}_s},\label{BS+}\\
\bar{p}_+=&((1+{\mu}^2)\overline{M}_-^2-{\mu}^2)\bar{p}_-,\label{RH5}\\
\bar{q}_+  =& {\mu}^2(\bar{q}_- + \frac{2}{\gamma-1}\frac{\bar{c}_{-}^2}{\bar{q}_-}),\label{RH6}\\
\bar{\rho}_+ =& \frac{{\bar{\rho}_- \bar{q}_-}}{\bar{q}_+}\label{RH7},
\end{align}
where $\bar{c}_{-}= \sqrt{\displaystyle\frac{\gamma \bar{p}_-}{\bar{\rho}_-}}$, $\overline{M}_- = \displaystyle\frac{\bar{q}_-}{\bar{c}_{-}}>1$, and $ {\mu}^2=\displaystyle\frac{\gamma-1}{\gamma+1}$.

It is easy to see that \eqref{eq:2.23} is equivalent to \eqref{BS-}.

Next, by applying \eqref{RH5}-\eqref{RH7}, further calculations yield that
  \begin{align}\label{BS+1}
   \bar{p}_+=\frac{2\bar{\rho}_-\bar{q}_-^2}{\gamma+1}   - \frac{\gamma-1}{\gamma+1}\bar{p}_-,\quad
  \bar{\rho}_+ = \frac{(\gamma+1)\bar{\rho}_- \overline{M}_-^2}{(\gamma-1) (\overline{M}_-^2 + \frac{2}{\gamma-1})}.
 \end{align}
Employing \eqref{BS-}-\eqref{BS+}, then \eqref{BS+1} yields that
 \begin{align}\label{rhoqq}
 \frac{\dif(\bar{\rho}_-\bar{q}_-^2)}{\dif y} = -\frac{\gamma-1}{2}\big( 1 +  \frac{(\gamma+1)^2}{(\gamma -1)^2}\frac{1}{ 1 + \frac{2}{(\gamma-1)\overline{M}_-^2}} \big)\bar{\rho}_- g.
 \end{align}
 Note
 \begin{align}\label{frac1M2}
   \frac{\dif}{\dif y}\big(\frac{1}{\overline{M}_-^2(y)}\big) =  \frac{\dif}{\dif y}\big(\frac{\gamma \bar{p}_-}{\bar{\rho}_- \bar{q}_-^2}\big) = \frac{\gamma }{\bar{\rho}_- \bar{q}_-^2} \frac{\dif\bar{p}_-}{\dif y} - \frac{\gamma \bar{p}_-}{\bar{\rho}_-^2 \bar{q}_-^4}  \frac{\dif (\bar{\rho}_-\bar{q}_-^2)}{\dif y}.
 \end{align}
So \eqref{BS+8} follows from \eqref{BS-}, \eqref{rhoqq}, and \eqref{frac1M2}.

On the other hand, by
 \eqref{BS+1}, one has
 \begin{align}
&\text{left hand side of }  \eqref{BS+} = \frac{\dif\bar{p}_+}{\dif y} = \frac{2}{\gamma+1} \frac{\dif(\bar{\rho}_-\bar{q}_-^2)}{\dif y}- \frac{\gamma-1}{\gamma+1}\frac{\dif \bar{p}_-}{\dif y},\label{LHS=}\\
&\text{right hand side of }  \eqref{BS+} = - \bar{\rho}_+ g = -\frac{(\gamma+1)\bar{\rho}_- \overline{M}_-^2}{(\gamma-1) (\overline{M}_-^2 + \frac{2}{\gamma-1})} g.\label{RHS=}
 \end{align}
 By \eqref{frac1M2}, one has
 \begin{align}\label{rhoq2M}
   \frac{\dif(\bar{\rho}_-\bar{q}_-^2) }{\dif y}= \frac{\bar{\rho}_-\bar{q}_-^2}{\bar{p}_-} \frac{\dif\bar{p}_-}{\dif y} - \frac{\bar{\rho}_-^2 \bar{q}_-^4}{\gamma \bar{p}_-} \frac{\dif}{\dif y}\big(\frac{1}{\overline{M}_-^2}\big).
 \end{align}
Substituting \eqref{BS+8} into \eqref{rhoq2M} and applying \eqref{BS-}, \eqref{LHS=}, \eqref{RHS=}, it follows that
\eqref{BS+} holds if \eqref{BS+8} holds.

\emph{Step 2:}
In this step, we prove the existence of solutions of the form $(\overline{U}_- (y);\ \overline{U}_+ (y);\ \bar{x}_s)$.

Let $t(y)\defs  \frac{1}{\overline{M}_-^2(y)}$, then \eqref{BS+8} becomes:
\begin{align}\label{dtyeq}
\frac{\dif t(y) }{\dif y} =& -\frac{g}{\bar{q}_-^2(y)} \frac{\gamma-1}{4}\big(1-\gamma
-2t(y) +\frac{(\gamma+1)^2}{\gamma -1 + 2t(y)}
\big)\notag\\
=& -\frac{g}{\bar{q}_-^2(y)} \frac{(\gamma-1)^2}{4}\big(   -\big(1+\frac{2t(y)}{\gamma-1} \big)+\frac{1}{\mu^4}\frac{1}{1+\frac{2t(y)}{\gamma-1}} \big).
\end{align}
Let $T(y) \defs 1+\frac{2t(y)}{\gamma-1}$. If $t(y)\in(0,1)$ for $y\in [0,1]$, then
\begin{align}
  -\big(1+\frac{2t(y)}{\gamma-1} \big)+\frac{1}{\mu^4}\frac{1}{1+\frac{2t(y)}{\gamma-1}} =  \frac{(1- \mu^2 T(y))(1+\mu^2 T(y))}{\mu^4 T(y)} >0,
\end{align}
Thus, \eqref{dtyeq} yields that for $y\in [0, 1]$,
$\frac{\dif t(y) }{\dif y} < 0$ for $g>0$.
So
\begin{align}
t(y) > t(1) >0, \quad \text{for}\quad g>0.
\end{align}
Applying \eqref{dtyeq}, further calculations yield that
\begin{align}\label{t(y)}
  t(y) = \frac{\gamma+1}{2} \sqrt{ 1 - \big( 1 - \mu^4\big(  1+\frac{2t(1)}{\gamma-1} \big)^2 \big) e^{-(\gamma-1)g\int_y^1  \frac{\dif\tau}{\bar{q}_-^2(\tau)}}}-\frac{\gamma-1}{2}.
\end{align}
Employing $t(1) =  \frac{1}{\overline{M}_-^2(1)} \in (0,1)$, \eqref{t(y)} yields that
\begin{align}
  t(y) < 1, \quad \text{for}\quad y\in[0,1], \,\, g>0.
\end{align}
By \eqref{eq:2.23},
\begin{align}\label{barp-y}
\bar{p}_- (y) = \bar{p}_-(1) e^{\gamma g\int_y^{1} \frac{1}{t(\tau)\bar{q}_-^2(\tau)} \dif\tau}.
\end{align}
Substituting \eqref{barp-y} into the formula
\begin{align}\label{barrhog-}
 \bar{\rho}_-(y) = \frac{\gamma \bar{p}_-(y)}{t(y) \bar{q}_-^2(y)},
\end{align}
we have $\bar{\rho}_-(y)$. Then, by the state equation \eqref{stateP}, we find the expression of $\overline{S}_-(y)$. Finally, the expression of $\overline{U}_+(y)$ can be derived from \eqref{RH5}-\eqref{RH7}.
\end{proof}
\vspace{-0.25cm}
\begin{rem}
Following the proof of Theorem \ref{BGthmxy}, it is easy to see that for a given $C^3$-function $\bar{q}_-(y)>0$ and a constant $g<0$ with conditions $\overline{M}_-(0)>1$ and $\bar{p}_-(1)>0$,
there exist unique functions $\overline{U}_-(y)$ in $\mathcal{D}_-^{\bar{x}_s}$
satisfying \eqref{BS+8} and \eqref{eq:2.23}.
\end{rem}
\subsection{Main theorem}
Based on the constructed special normal transonic shock solutions, we consider the following free boundary problem in the sense of Definition \ref{def1}.
Let
\begin{align}
U_{-}^{0}(y) =& \overline{U}_-(y) +  ( \sigma p_{en}^{\sharp}(y), 0, 0,0)^{\top},\\
  \varphi_w (x) =& 1 + \int_0^{x} \tan (\sigma\Theta(s)) \dif s,\\
  p_{+}^{L}(y) =& \bar{p}_+ (y) + \sigma p_{ex}(y),
\end{align}
where $\sigma>0$ is a small constant, and $p_{en}^{\sharp} (y), \Theta(x), p_{ex}(y)$ are given functions satisfying the following conditions:
\begin{align}
&p_{en}^{\sharp}(y) \in C^{2,\alpha}([0,1]), \quad  p_{ex}(y)\in C^{2,\alpha}([0,\varphi_w(L)]), \quad \Theta(x)\in C^{2,\alpha}([0,L]),\label{assumexy}\\
&\Theta(0)=\Theta'(0) = \Theta''(0) =0, \\
&\sigma \frac{\dif}{\dif y} p_{en}^{\sharp}(y) = -g\big( \big(\frac{\bar{p}_- + \sigma p_{en}^{\sharp}}{A e^{\frac{\bar{S}_-}{c_v}}}\big)^{\frac{1}{\gamma}} - \bar{\rho}_-\big)(y),\,\, \text{at} \,\, y=0,1. \label{CC01=}
\end{align}
Let
\begin{align}
\wp_1\defs  \frac{\gamma-1+2t(1)}{\gamma+1}\min\limits_{y\in[0,1]} \bar{q}_-(y),\quad \wp_2\defs \frac{\gamma \bar{p}_-(1)}{ t(0)\max\limits_{y\in[0,1]}\bar{q}_-(y)},
\end{align}
where $t(1) =  \frac{1}{\overline{M}_-^2(1)} \in (0,1)$ and $\bar{p}_-(1)>0$ are given constants. Let
\begin{align}
\bar{p}_- (0) =& \bar{p}_-(1) e^{\gamma g\int_0^1 \frac{1}{t(y)\bar{q}_-^2(y)} \dif y},\\
 t(0) =& \frac{\gamma+1}{2} \sqrt{ 1 - \Big( 1 - \mu^4\big(  1+\frac{2t(1)}{\gamma-1} \big)^2 \Big) e^{- (\gamma-1)g\int_0^1  \frac{\dif y}{\bar{q}_-^2(y)}}}-\frac{\gamma-1}{2}.
\end{align}
In this paper, we will establish the following theorem.
\vspace{-0.35cm}
\begin{thm}\label{mainthm3}
  Assume that \eqref{assumexy}-\eqref{CC01=} and the assumptions \eqref{assump1}-\eqref{eq:<J2<} in Lemma \ref{determinemathcalK0} hold. Let
\begin{align}\label{assumpthm}
0< L \leq \min\Big\{\sqrt{\frac{\alpha(1-\alpha)}{2}}\frac{1}{g} \wp_1^2,\,\, \frac{\sqrt{2}\wp_1^2}{2g} \min\{1, \wp_2\}\Big\}.
\end{align}
Then there exists a sufficiently small positive constant $\sigma_0$, which depends only on $\overline{U}_{\pm}$ and $L$.
For any $0< \sigma \leq \sigma_0$, there exists a solution $(U_-, U_+ ; \psi)$ for the free boundary problem in the sense of Definition \ref{def1} satisfying the following estimates:
\begin{align}
&\| U_- - \overline{U}_- \|_{C^{2,\alpha}(\mathcal{D}_-)}  \leq C_-\sigma,\\
&\| (p_+ - \bar{p}_+, \theta_+ )\circ \pounds_{\mathcal{K}_0}^{-1} \|_{1,\alpha;\Omega_+^{\mathcal{K}_0}}^{(-\alpha;\{Q_i\})}
+ \| (q_+ - \bar{q}_+, S_+ - \bar{S}_+ )\circ \pounds_{\mathcal{K}_0}^{-1} \|_{1,\alpha;\Omega_+^{\mathcal{K}_0}}^{(-\alpha;\{\overline{\Gamma_0^{\mathcal{K}_0}}\cup \overline{\Gamma_{\overline{m}}^{\mathcal{K}_0}}\})}\notag\\
& +\| {\psi}'\|_{1,\alpha;\Gamma_s^{\mathcal{K}_0}}^{(-\alpha;\{Q_1,Q_4\})}
+ |\psi(\overline{m}) - \mathcal{K}_0| \leq C_+ \sigma,\label{U+esthm}
\end{align}
where the positive constant $C_-$ depends on $\overline{U}_-$, $L$ and $\alpha$, the positive constant $C_+$ depends on $\overline{U}_{\pm}$, $L$ and $\alpha$. $\mathcal{K}_0$ is the approximate shock position, which is obtained in Lemma \ref{determinemathcalK0}.
In addition, $\pounds_{\mathcal{K}_0}^{-1}$, $\Omega_+^{\mathcal{K}_0}$, $Q_i,(i=1,2,3,4)$ and $\Gamma_0^{\mathcal{K}_0}, \Gamma_{\overline{m}}^{\mathcal{K}_0}, \Gamma_s^{\mathcal{K}_0}$ are defined in \eqref{poundsK=}, \eqref{Omega+defs}, \eqref{Qidefs=}-\eqref{Gammadefs=} and \eqref{GammasK0=} with $\mathcal{K}\defs \mathcal{K}_0$.
\end{thm}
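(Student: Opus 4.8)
\emph{Overview.} The plan is to decouple the supersonic flow ahead of the shock, which is a standard hyperbolic problem, from the subsonic flow behind it, to reformulate the subsonic problem in Lagrangian coordinates, and then to run a nonlinear iteration in which the shock position is fixed at each step by the solvability condition of the linearized problem of Section~4.

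\emph{Step 1: the supersonic flow.} First I would solve \eqref{E1}--\eqref{E4} with entry data \eqref{eq11} and the slip conditions \eqref{eq11-} on the walls, on a domain slightly larger than $\mathcal{D}_-$ so the solution is defined on a full neighborhood of any admissible shock curve. The background $\overline{U}_-$ solves the ODE system from Theorem~\ref{BGthmxy}; since the data are an $O(\sigma)$ perturbation and the corner compatibility condition \eqref{CC01=} holds, standard estimates for this quasilinear hyperbolic initial-boundary value problem give a unique $C^{2,\alpha}$ solution $U_-$ with $\|U_- - \overline{U}_-\|_{C^{2,\alpha}(\mathcal{D}_-)} \le C_-\sigma$. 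This step is independent of the free boundary and is done once.

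\emph{Step 2: Lagrangian reformulation and the linearized problem.} Because the optimal regularity of the subsonic solution at the corners $Q_1,Q_4$ is only $C^{0,\alpha}$, streamlines in Eulerian coordinates need not be unique, so I would pass to Lagrangian coordinates, mapping $\mathcal{D}_+$ with a candidate constant shock position $\mathcal{K}$ onto $\Omega_+^{\mathcal{K}}$ via $\pounds_{\mathcal{K}}$; there the streamlines are straight and the Euler system becomes an elliptic-hyperbolic composite system, the Bernoulli quantity and entropy being transported along streamlines while $(p_+,\theta_+)$ satisfy a first-order elliptic system whose lower-order terms carry the non-small factor $g$. Given an approximate subsonic state $\widehat U_+$ and the parameter $\mathcal{K}$, the R.-H. conditions \eqref{RH1}--\eqref{RH4} together with the known $U_-$ produce the shock slope $\varphi_s'$ (hence the shock curve $\psi$ up to the constant $\psi(\overline m)=\mathcal{K}$), Dirichlet/oblique data for $(p_+,\theta_+)$, and Cauchy data for $(q_+,S_+)$ on $\Gamma_s^{\mathcal{K}}$; together with the slip conditions on $\Gamma_0^{\mathcal{K}},\Gamma_{\overline m}^{\mathcal{K}}$ and $p_+ = p_+^{L}$ at the exit this is precisely the linearized free boundary problem of Section~4, whose auxiliary problem controls the $g$-order terms in the weighted H\"older norm adapted to the corners.

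\emph{Step 3: determining the shock position, then iteration.} The linearized elliptic-hyperbolic problem is solvable in the weighted H\"older space only if its data satisfy a single scalar solvability condition, an integral identity obtained by pairing the elliptic system with the kernel of the homogeneous adjoint problem; this identity involves $\overline U_\pm$, the shock location $\mathcal{K}$, and the large parameter $g$. Using Lemma~\ref{determinemathcalK0} and the smallness of $L$ in \eqref{assumpthm}, I would show the solvability condition is strictly monotone in $\mathcal{K}$ with the $g$-contribution dominating the $O(L)$ errors, so it has a unique root $\mathcal{K}=\mathcal{K}(\widehat U_+)$ near $\mathcal{K}_0$ depending Lipschitz-continuously on $\widehat U_+$ with a small constant; this is where gravity plays the dominant role in selecting the admissible shock position. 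I would then define $\mathcal{T}:\widehat U_+\mapsto U_+$ by choosing $\mathcal{K}=\mathcal{K}(\widehat U_+)$, solving the linearized problem for that $\mathcal{K}$, and updating $\psi$ from the R.-H. relations; combining the linear estimates of Section~4 with the Lipschitz dependence of $\mathcal{K}$, for $\sigma\le\sigma_0(\overline U_\pm,L)$ the map $\mathcal{T}$ sends the ball of radius $C_+\sigma$ in the weighted norm \eqref{U+esthm} into itself and is a contraction. Its fixed point, with $U_-$ from Step~1 and the reconstructed $\varphi_s$, is a transonic shock solution in the sense of Definition~\ref{def1}, and transforming back to Eulerian coordinates and collecting the bounds gives the asserted estimates.

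\emph{Main obstacle.} The technical heart is Steps~2--3: since $g$ is generic rather than small, the elliptic and transport parts of the subsonic system cannot be decoupled as in \cite{FG2022}, so the strongly coupled, non-local, non-constant-coefficient lower-order terms must be controlled in a weighted H\"older norm tailored to the corner singularities at $Q_1,Q_4$; extracting from the resulting solvability condition a well-posed equation for the shock position, in which the smallness of $L$ (not of $g$) is what makes the implicit-function argument close, is the most delicate part of the argument.
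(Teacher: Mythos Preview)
Your proposal is essentially correct and follows the same architecture as the paper: solve the supersonic part once, pass to Lagrangian coordinates with the shock fixed by $\pounds_{\mathcal{K}}$, invoke the linear theory of Section~4 with its scalar solvability condition, use that condition to pin down the shock position, and close by a contraction argument.

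Two points of clarification where your description drifts slightly from the paper's actual implementation. First, the smallness condition \eqref{assumpthm} on $L$ is \emph{not} what drives the monotonicity of the solvability condition in the shock position; in the paper \eqref{assumpthm} is used only inside the proof of Theorem~\ref{Rthm}, to make the bilinear form $\mathcal{B}[\widehat\Psi,\widehat\Psi]$ coercive despite the positive zero-order coefficient $g^2/(\bar q_+^3 A_+^2)$ and to build the $L^\infty$ barrier $\vartheta$. The monotonicity that fixes $\mathcal{K}_0$ comes entirely from the separate hypotheses \eqref{assump1}--\eqref{eq:<J2<} of Lemma~\ref{determinemathcalK0}, via $J_1'(\xi)>0$ on $(0,L_0)$; gravity enters there through the coefficient $K(\eta)=K_1(\eta)+K_2'(\eta)$.

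Second, and more substantively for the iteration, the paper does \emph{not} let the reference position $\mathcal{K}$ vary with $\widehat U_+$ as your ``$\mathcal{K}=\mathcal{K}(\widehat U_+)$'' suggests. Doing so would change the domain $\Omega_+^{\mathcal{K}}$ at every step and force you to compare functions living in different spaces. Instead the paper fixes $\mathcal{K}=\mathcal{K}_0$ once and for all (from the zeroth-order linearization, Lemma~\ref{lem(0)=}) and iterates in the fixed space over $\Omega_+^{\mathcal{K}_0}$; the \emph{actual} shock endpoint $\psi^*(\overline m)$ is a scalar unknown updated at each step by the implicit function theorem applied to $J_*(\cdot)=0$, using $\partial_{\psi^*(\overline m)}J_*|_{\mathcal{K}_0}=\sigma J_1'(\mathcal{K}_0)\neq 0$. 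The iteration set $\mathscr{N}$ also carries a second, finer constraint (closeness of order $\sigma^{3/2}$ to the first approximation $\delta U_+^{(0)}$) that is needed to get the contraction; this is a detail your outline omits but is necessary to make the argument close.
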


\begin{rem}
The assumptions \eqref{assump1}-\eqref{eq:<J2<} are sufficient conditions for determining of the approximate shock position $\mathcal{K}_0$, and can also be replaced by the assumptions \eqref{assump1rem}-\eqref{<J2<eq}. See Lemma \ref{determinemathcalK0} and Lemma \ref{2suffxi0=} for more details.
\end{rem}
\begin{rem}[Definition of the weighted H\"{o}lder norms]
Let $\Gamma$ be an open portion of $\partial\Omega$, for any $\mathbf{x}$, $\mathbf{y}$ $\in$ $\Omega$, define
\begin{equation}
d_{\mathbf{x}}: = \dist(\mathbf{x}, \Gamma),\,\,\, \text{and} \,\,\,
d_{\mathbf{x},\mathbf{y}}:= \min (d_{\mathbf{x}},d_{\mathbf{y}}).
\end{equation}
Let $\alpha\in(0,1)$ and $\delta\in \mathbb{R}$, we define:
\begin{align}
&[u]_{k ,0;\Omega}^{(\delta;\Gamma)} : = \sup\limits_{\mathbf{x}\in\Omega, |\mathbf{m}| = k}\big(d_{\mathbf{x}}^{\max(k + \delta,0)}|D^{\mathbf{m}} u(\mathbf{x})| \big),\\
&[u]_{k,\alpha;\Omega}^{(\delta;\Gamma)} : = \sup\limits_{{\mathbf{x},\mathbf{y}}\in\Omega, \mathbf{x}\neq\mathbf{y}, |\mathbf{m}| = k}\big(d_{\mathbf{x},\mathbf{y}}^{\max( k +\alpha+\delta,0)}\displaystyle\frac
{|D^{\mathbf{m}} u(\mathbf{x})- D^{\mathbf{m}} u(\mathbf{y})|}{|\mathbf{x} - \mathbf{y}|^\alpha}\big),\\
& \| u \|_{k,\alpha;\Omega}^{(\delta;\Gamma)} :  =\sum_{i =0}^{k} [u]_{i,0;\Omega}^{(\delta;\Gamma)} + [u]_{k,\alpha;\Omega}^{(\delta;\Gamma)}.
\end{align}
\end{rem}

\vspace{-0.65cm}
\section{Reformulation of the free boundary value problem behind the shock front}
To prove Theorem \ref{mainthm3}, it suffices to establish the well-posedness of the subsonic solution behind the shock front for the nonlinear free boundary value problem.
In this section, we will introduce coordinates transformations to straighten the nozzle wall and to fix the shock front. Based on this, we will propose a reformulated problem in the subsonic domain.
\subsection{Flatten the nozzle wall and fix the shock front}
To flatten the nozzle wall, we introduce the following Lagrangian coordinate:
\begin{align}\label{Lagrangexieta=}
({\xi}, {\eta}) = (x, \frac{\overline{m}}{m} \int_{(0,0)}^{(x,y)}\rho u(s,t) \dif t - \rho v(s,t) \dif s),
\end{align}
where $\overline{m} \defs\int_{0}^{1}\bar{\rho}_-(y)\bar{q}_-(y) \dif y$ and $m\defs\int_{0}^{1}\big( 1+\frac{\sigma p_{en}^{\sharp}(y)}{\bar{p}_-(y)}\big)^{\frac{1}{\gamma}}\bar{\rho}_-(y)\bar{q}_-(y) \dif y$. In the coordinate \eqref{Lagrangexieta=}, the shock front
\eqref{shockEuler} becomes
\begin{align}\label{Gammas==}
\Gamma_s\defs\set{(\xi, \eta)\in \mathbb{R}^2 :  \xi =\psi(\eta),\, 0 < \eta < \overline{m}}.
\end{align}
To fix the shock front $\Gamma_s$ in \eqref{Gammas==}, we introduce the following coordinate transformation: Let $\mathcal{K}\in(0,L)$ be a constant and
\begin{align}\label{tildexieta}
\pounds_\mathcal{K}: (\tilde{\xi}, \tilde{\eta}) = \big( L + \frac{L - \mathcal{K}}{L - {\psi}(\eta)}(\xi - L), \,\eta \big),
\end{align}
with the inverse
\begin{align}\label{poundsK=}
 \pounds_{\mathcal{K}}^{-1}: (\xi,\, \eta) = \big(L + \frac{L -  {\psi}(\tilde{\eta})}{L - \mathcal{K}}(\tilde{\xi} - L),\,
\tilde{\eta}\big).
\end{align}
Under the transformations \eqref{Lagrangexieta=} and \eqref{tildexieta}, equations \eqref{E1}-\eqref{E4} become
\begin{align}
&\frac{L - \mathcal{K}}{L - {\psi}(\tilde{\eta})}\partial_{\tilde{\xi}} \big(\frac{1}{\tilde{\rho} \tilde{q} \cos\tilde{\theta}}\big) - \frac{m}{\overline{m}} \big(\frac{(\tilde{\xi} -L)\psi'(\tilde{\eta})}{L - \psi(\tilde{\eta})} \partial_{\tilde{\xi}} + \partial_{\tilde{\eta} }\big)(\tan\tilde{\theta}) = 0,\label{LCmass}\\
&\frac{L - \mathcal{K}}{L - {\psi}(\tilde{\eta})}\partial_{\tilde{\xi}}  (\tilde{q} \sin\tilde{\theta}) + \frac{m}{\overline{m}} \big(\frac{(\tilde{\xi} -L)\psi'(\tilde{\eta})}{L - \psi(\tilde{\eta})} \partial_{\tilde{\xi}} + \partial_{\tilde{\eta} }\big) \tilde{p} + \frac{g}{\tilde{q}\cos\tilde{\theta}} =
0,\label{LCmo}\\
&\frac{L - \mathcal{K}}{L - {\psi}(\tilde{\eta})}\partial_{\tilde{\xi}} \big(\frac12 \tilde{q}^2 + \tilde{i} \big)+ g\tan\tilde{\theta} =0,\label{LCB}
\\
& \partial_{\tilde{\xi}} \tilde{S} = 0,\label{LCS}
\end{align}
where $(\tilde{p}, \tilde{\theta}, \tilde{q}, \tilde{S})(\tilde{\xi}, \tilde{\eta})\defs (p,\theta,q,S) \circ \pounds_{\mathcal{K}}^{-1} (\tilde{\xi}, \tilde{\eta})$.
In addition, 
the background solutions $\overline{U}_{\pm}(y)$ become the vector function of variable $\tilde{\eta}$, \emph{i.e.},
$\overline{U}_-(\tilde{\eta})\defs (\bar{p}_-(\tilde{\eta}),0 ,\bar{q}_-(\tilde{\eta}),\bar{S}_-(\tilde{\eta}))^\top$ and $\overline{U}_+(\tilde{\eta})\defs (\bar{p}_+(\tilde{\eta}),0 ,\bar{q}_+(\tilde{\eta}),\bar{S}_+(\tilde{\eta}))^\top$
with satisfying
\begin{align}\label{L-+}
\frac{\dif\bar{p}_-(\tilde{\eta})}{\dif \tilde{\eta}} = -\frac{g}{\bar{q}_-(\tilde{\eta})},\quad
\frac{\dif\bar{p}_+(\tilde{\eta})}{\dif \tilde{\eta}} = -\frac{g}{\bar{q}_+(\tilde{\eta})}.
\end{align}
The given quantity $p_{en}^{\sharp}(y)$ becomes
\begin{align}\label{penetadef}
p_{en}(\tilde{\eta}) \defs p_{en}^{\sharp}(y(0,\tilde{\eta})) = p_{en}^{\sharp}\big( \frac{m}{\overline{m}} \int_0^{\eta}\frac{1}{\rho u(0,t)}\dif t  \big).
\end{align}
The domains \eqref{mathcalD-}-\eqref{mathcalD+} and boundaries \eqref{E0entrance}-\eqref{W1Wall} become
\begin{align}
&\Omega_-^{\mathcal{K}} \defs  \{(\tilde{\xi},\tilde{\eta}): 0<\tilde{\xi} < \mathcal{K},\,\, 0<\tilde{\eta} < \overline{m}\},\label{Omega-defs}\\
& {\Omega}_+^{\mathcal{K}} = \{ (\tilde{\xi}, \tilde{\eta})\in \mathbb{R}^2 : \mathcal{K} < \tilde{\xi} < L,\,\,  0 < \tilde{\eta} < \overline{m} \},\label{Omega+defs}\\
&\Gamma_{en} = \{(\tilde{\xi},\tilde{\eta}): \tilde{\xi}=0, \, 0<\tilde{\eta}< \overline{m} \}, \\
&\Gamma_{ex} = \{(\tilde{\xi},\tilde{\eta}): \tilde{\xi}=L,\, 0<\tilde{\eta}< \overline{m} \},\\
&\Gamma_{0} = \{(\tilde{\xi},\tilde{\eta}): 0<\tilde{\xi}<L,\, \tilde{\eta}=0 \}, \\
&\Gamma_{\overline{m}} = \{(\tilde{\xi},\tilde{\eta}): 0<\tilde{\xi}<L, \,\tilde{\eta}= \overline{m} \},\\
&\Gamma_{s}^{\mathcal{K}} = \{ (\tilde{\xi},\tilde{\eta})\in \mathbb{R}^2 : \tilde{\xi} = \mathcal{K},\,\, 0 <\tilde{\eta} < \overline{m} \}.\label{GammasK0=}
\end{align}
For notational simplicity, we drop `` $\widetilde{}$ '' in the following arguments.

\vspace{-0.20cm}
\subsection{Reformulated problem in the subsonic domain}
The unique existence of the supersonic solution $U_-$ ahead of the shock front can be easily established by employing the method of characteristic line (cf. Section 2 - Section 3 of the book \cite{LY1985}).
Therefore, to show  Theorem \ref{mainthm3}, we only need to consider the solution in the subsonic domain.
Let $U_+(\xi,\eta) =\overline{U}_+(\eta)  + \delta U_+(\xi,\eta)$ be the state of the subsonic flow behind the shock front, then \eqref{LCmass}-\eqref{LCS} yield that $\delta U_+ (\xi,\eta)\defs (\delta p_+,\, \delta \theta_+,\, \delta q_+,\, \delta S_+)^{\top}(\xi,\eta)$ satisfies the following equations:
\begin{align}
  &\partial_{\xi} \big( \frac{1-\overline{M}_+^2(\eta)}{\bar{\rho}_+^2(\eta) \bar{q}_+^3(\eta)} A_+(\eta) \delta{p}_+\big) - \partial_{\eta}\big( A_+(\eta) \delta{\theta}_+\big)= F_1,\label{Rrefeq1}\\
 &\partial_{\xi}\big(\frac{\bar{q}_+(\eta)}{ A_+(\eta) } \delta{\theta}_+\big) + \partial_{\eta} \big(\frac{1}{A_+(\eta)}\delta{p}_+\big) + \frac{g^2}{ \bar{q}_+^3(\eta)A_+(\eta)}\int_{\mathcal{K}}^{\xi} \delta{\theta}_+ (\tau, \eta)\dif \tau  = F_2,\label{Rrefeq2}\\
&\partial_\xi \big( \frac{1}{\bar{\rho}_+(\eta)}\delta p_+ +  \bar{q}_+ (\eta)\delta{q}_+ + \bar{T}_+ (\eta)\delta S_+\big) + g \delta \theta_+ = F_3,\label{Rrefeq3}\\
&\partial_\xi \delta {S}_+ = 0,\label{Rrefeq4}
\end{align}
where $A_+(\eta)\defs e^{-g\int_0^\eta
\frac{1}{\bar{\rho}_+(s)\bar{q}_+^3(s)}\dif s}$ and
\begin{align}
 F_1 \defs &
\partial_{\xi} \big(A_+(\eta)f_{11} + \frac{A_+(\eta)}{\bar{\rho}_+(\eta)\bar{q}_+^3(\eta)}f_{31}\big)+ \partial_{\eta} (A_+(\eta)f_{12})\notag\\
& - A_+'(\eta)f_{12} +A_+(\eta) f_{13}+ \frac{A_+(\eta)}{\bar{\rho}_+(\eta)\bar{q}_+^3(\eta)}f_{32},\label{defsF1}\\
F_2 \defs &
\partial_{\xi} \big(\frac{1}{A_+(\eta)}f_{21}\big) + \partial_{\eta} \big(\frac{1}{A_+(\eta)}f_{22} \big) + \frac{A_+'(\eta)}{A_+^2(\eta)} f_{22} + \frac{1}{A_+(\eta)}f_{23}\notag\\
& +\frac{g}{\bar{\rho}_+(\eta)\bar{q}_+^3(\eta)A_+(\eta)}\big(\delta p_+(\mathcal{K}, \eta) + \bar{\rho}_+(\eta)\bar{q}_+(\eta)\delta{q}_+(\mathcal{K}, \eta) \big)+ \frac{g}{\bar{q}_+^3(\eta)A_+(\eta)} f_{31}(\xi,\eta)\notag\\
&  -  \frac{g}{\bar{q}_+^3(\eta)A_+(\eta)} f_{31}(\mathcal{K},\eta) + \frac{g}{\bar{q}_+^3(\eta)A_+(\eta)}\int_{\mathcal{K}}^{\xi} f_{32} (s,\eta)\dif s,\\
F_3\defs & \partial_{\xi} f_{31} +  f_{32},\\
f_{11}\defs & - \frac{1}{\rho_+ q_+\cos\theta_+}+ \frac{1}{\bar{\rho}_+(\eta)\bar{q}_+(\eta)} - \frac{1}{\bar{\rho}_+^2(\eta)\bar{q}_+(\eta)\bar{c}_+^2(\eta)}
   \delta{p}_+ - \frac{1}{\bar{\rho}_+(\eta)\bar{q}_+^2(\eta)}\delta{q}_+\notag\\
   &- \frac{\psi(\overline{m})-\mathcal{K} - \int_{\eta}^{\overline{m}} \psi'(s)\dif s}{L -\psi(\eta)}\frac{1}{\rho_+ q_+ \cos\theta_+} +\frac{m}{\overline{m}} \frac{(\xi -L)\psi'}{L-\psi}\tan\theta_+,\label{Lf11=}\\
   f_{12}\defs& (\frac{m}{\overline{m}} -1) \delta \theta_+ + \frac{m}{\overline{m}} (\tan\theta_+ - \delta \theta_+), \notag\\
   f_{13}\defs & - \frac{m}{\overline{m}} \frac{\psi'}{L-\psi}\tan\theta_+,\\
   f_{21}\defs &  - q_+\sin\theta_+ + \bar{q}_+(\eta)\delta\theta_+ -\frac{\psi(\overline{m})- \mathcal{K}- \int_{\eta}^{\overline{m}} \psi'(s)\dif s}{L -\psi(\eta)}q_+ \sin\theta_+ \notag\\
    &- \frac{m}{\overline{m}} \frac{(\xi -L)\psi'}{L-\psi}\delta p_+,\\
  f_{22}\defs & - \frac{m-\overline{m}}{\overline{m}}\delta p_+,\\
 f_{23}\defs &\frac{m-\overline{m}}{\overline{m}}\frac{g}{\bar{q}_+(\eta)}
  -g\big(\frac{1}{q_+\cos\theta_+}-\frac{1}{\bar{q}_+(\eta)}+ \frac{1}{\bar{q}_+^2(\eta)} \delta q_+ \big)  + \frac{m}{\overline{m}} \frac{\psi'}{L-\psi}\delta p_+,\\
  f_{31}\defs & \frac{1}{\bar{\rho}_+(\eta)}\delta p_+ +  \bar{q}_+(\eta)\delta{q}_+ + \bar{T}_+(\eta) \delta S_+ - \big(\frac12 q_+^2 + i_+ - (\frac12 \bar{q}_+^2(\eta) + \bar{i}_+(\eta)) \big)\notag\\
  & - \frac{\psi(\overline{m}) - \mathcal{K} -  \int_{\eta}^{\overline{m}} \psi'(s)\dif s}{L -\psi(\eta)} (\frac12 q_+^2 + i_+),\\
  f_{32}\defs & g(\delta \theta_+ - \tan\theta_+ ).\label{Lf32=}
\end{align}

The R.-H. conditions \eqref{RH1}-\eqref{RH4} are reformulated as
\begin{align}
&A_s (\delta{p}_+, \delta{q}_+,\delta{S}_+)^{\top} = (\mathcal{H}_1^{\sharp},\, \mathcal{H}_2^{\sharp},\, \mathcal{H}_3^{\sharp})^\top \defs \overrightarrow{\mathcal{H}},\label{reformulationRH123}\\
&\alpha_4^+ \cdot \delta U_+ - \frac{m}{\overline{m}} [\bar{p}]\delta \psi' = \mathcal{H}_4^{\sharp} (\delta U_+,\delta U_-; \delta \psi'),\label{reformulationRH4}
\end{align}
where \[ A_s\defs  \displaystyle\frac{[\bar{p}]}{\bar{\rho}_+\bar{q}_+}\begin{pmatrix}
	-\displaystyle\frac{1}{\bar{\rho}_+ \bar{c}_+^2}&-\displaystyle\frac{1}{\bar{q}_+} & \displaystyle\frac{1}{\gamma c_v}\\
1 - \displaystyle\frac{\bar{p}_+}{\bar{\rho}_+ \bar{c}_+^2}& \bar{\rho}_+\bar{q}_+ - \displaystyle\frac{\bar{p}_+}{\bar{q}_+}& \displaystyle\frac{\bar{p}_+}{\gamma c_v}\\
	\displaystyle\frac{\bar{q}_+}{[\bar{p}]}& \displaystyle\frac{\bar{\rho}_+\bar{q}_+^2}{[\bar{p}]}& \displaystyle\frac{\bar{p}_+}{(\gamma - 1)c_v}\displaystyle\frac{\bar{q}_+}{[\bar{p}]}
	\end{pmatrix},\]
and
\begin{align}
&\mathcal{H}_j^{\sharp}(\delta U_+,\delta U_-): = \alpha_j^+\cdot \delta U_+ - \mathcal{H}_j(U_+, U_-),\quad {j = 1,2,3},\label{2.73}\\
&\mathcal{H}_4^{\sharp} (\delta U_+,\delta U_-; \delta\psi'): = \alpha_4^+ \cdot \delta U_+ - \frac{m}{\overline{m}} [\bar{p}]\delta \psi' - \mathcal{H}_4(U_+, U_-;\delta \psi')\label{xxl},\\
&\mathcal{H}_1(U_+, U_-) : = \Big[\displaystyle\frac{1}{\rho u}\Big][p] + \Big[\displaystyle\frac{v}{u}\Big][v]= 0,\label{LG1}\\
&\mathcal{H}_2(U_+, U_-) : = \Big[u+ \displaystyle\frac{p}{\rho u}\Big] [p]  + \Big[\displaystyle\frac{p v}{u}\Big][v]=0,\label{LG2}\\
&\mathcal{H}_3(U_+, U_-): = \Big[\frac12 q^2+i\Big]=0,\label{LG3}\\
&\mathcal{H}_4(U_+, U_-;\delta \psi'): = [v] - \frac{m}{\overline{m}}\delta \psi'[p]=0,\\
&{\mathbf{\alpha}}_4^{+} = {{\nabla_{{U}_+}}}\mathcal{H}_4(\overline{U}_+, \overline{U}_-;0)= (\bar{q}_+,\, 0,\,0 ,\, 0)^{\top}.\label{alpha4+}
\end{align}
Direct calculations yield that
\begin{align}\label{AS}
  \det A_s = \frac{1}{(\gamma -1)c_v} \frac{[\bar{p}]^2 \bar{p}_+}{(\bar{\rho}_+\bar{q}_+)^3}(1 - \overline{M}_+^2)\neq 0,\quad \text{as}\quad \overline{M}_+\neq 1.
\end{align}
Thus, \eqref{reformulationRH123} and \eqref{reformulationRH4} imply that
\begin{align}
&(\delta{p}_+,\,\delta{q}_+,\, \delta{S}_+) \defs (\frac{\bar{\rho}_+\bar{q}_+^2 }{1 - \overline{M}_+^2}{h}_1,\, \frac{1}{\bar{\rho}_+ \bar{q}_+}{h}_2,\,{h}_3) =A_s^{-1} \overrightarrow{\mathcal{H}}, \label{As-1H}\\
&\delta \psi' = {h}_4\defs \frac{\overline{m}}{m} \frac{\alpha_4^+ \delta U_+ - \mathcal{H}_4^{\sharp} (\delta U_+,\delta U_-; \delta \psi')}{ [\bar{p}]}.\label{h4==}
\end{align}

Then in the subsonic domain, we consider the following problem.
\begin{defn}[Subsonic solution behind the shock front]\label{RSP}
We say $\pr{ \delta{U}_+(\xi,\eta);\ \psi(\eta)}$
is a subsonic solution behind the shock front to the following boundary value problem:
\begin{enumerate}
\item $\delta U_+$ satisfies the Euler system \eqref{Rrefeq1}-\eqref{Rrefeq4} in
 $\Omega_+^{\mathcal{K}}$;

\item On the exit of the nozzle, the pressure is prescribed by:
\begin{align}\label{expre}
\delta p_+ (L, \eta) = \sigma p_{ex}(y(L,\eta;\overline{U}_+ +\delta U_+)), \quad  \text{on} \quad \Gamma_{ex},
\end{align}
where
\vspace{-0.3cm}
\begin{align}
y(L,\eta;\overline{U}_+ +\delta U_+) = \frac{m}{\overline{m}}\int_{0}^{\eta}\displaystyle\frac{1}{\rho_+ q_+\cos\theta_+ (L,s)} \dif s;
\end{align}

\item On the nozzle walls, the slip boundary conditions are as below:
\begin{align}
&\delta \theta_+(\xi) = 0,  &\text{on}&\quad \Gamma_0 \cap \overline{\Omega_+^{\mathcal{K}}},
\label{eq:theta2+re}\\
&\delta \theta_+(\xi) = \sigma\Theta\big( \frac{L(\psi(\overline{m}) - \mathcal{K} ) + (L-\psi(\overline{m}))\xi }{L-\mathcal{K}} \big),  &\text{on} &\quad \Gamma_{\overline{m}} \cap \overline{\Omega_+^{\mathcal{K}}};
\label{eq:theta4+re}
\end{align}

\item On the fixed shock front $\Gamma_s^{\mathcal{K}}$, \eqref{As-1H} and \eqref{h4==} hold.
\end{enumerate}
\end{defn}

\section{Linearized problem in the subsonic domain}
In order to solve the nonlinear free boundary value problem in the sense of Definition \ref{RSP}, we first consider a linearized problem of equations \eqref{Rrefeq1}-\eqref{Rrefeq4} with boundary conditions \eqref{As-1H}-\eqref{eq:theta4+re} and establish a well-posedness theorem for it in this section.
Let
\begin{align}
&Q_1\defs (\mathcal{K},0),\,\, Q_2\defs (L,0),\,\, Q_3\defs (L,\overline{m}),\,\, Q_4\defs (\mathcal{K},\overline{m}),\label{Qidefs=}\\
&\Gamma_0^{\mathcal{K}} \defs \Gamma_0 \cap \overline{\Omega_+^{\mathcal{K}}}, \,\, \Gamma_{\overline{m}}^{\mathcal{K}} \defs \Gamma_{\overline{m}} \cap \overline{\Omega_+^{\mathcal{K}}}.\label{Gammadefs=}
\end{align}
Assume that
\begin{align}
&f_{ij} \in C_{1,\alpha}^{(-\alpha; \{ \overline{\Gamma_0^{\mathcal{K}}}\cup \overline{\Gamma_{\overline{m}}^{\mathcal{K}}}\})}(\Omega_+^{\mathcal{K}}), \quad i=1,2,3,\, j=1,2,3, \,\, \text{and}\,\, f_{33} =0,\label{Assumpf}\\
& h_j\in C_{1,\alpha}^{(-\alpha; Q_1\cup Q_4)}(\Gamma_s^{\mathcal{K}}), \, j=1,2,3,4,\label{Assumph}\\
&p_{ex}(y(L,\eta;\overline{U}_+ +\delta U_+))\in C_{1,\alpha}^{(-\alpha; Q_2\cup Q_3)}(\Gamma_{ex}), \quad \Theta(\xi)\in C^{2,\alpha}(\Gamma_{\overline{m}}^{\mathcal{K}}).\label{assumebry}
\end{align}

The well-posedness of a linearized problem under assumptions \eqref{Assumpf}-\eqref{assumebry} will be established by proving the following theorem.
\begin{thm}\label{Rthm}
Assume that \eqref{assumpthm} and \eqref{Assumpf}-\eqref{assumebry} hold, then there exists a unique solution $(\delta p_+,\, \delta \theta_+,\, \delta q_+,\, \delta S_+; \delta \psi')$ to equations \eqref{Rrefeq1}-\eqref{Rrefeq4} with boundary conditions \eqref{As-1H}-\eqref{eq:theta4+re} if and only if
  \begin{align}\label{solvabilityhat}
 &\int_0^{\overline{m}}\frac{A_+(\eta)}{\bar{\rho}_+(\eta) \bar{q}_+(\eta)} {h}_1(\eta)\dif \eta + A_+(\eta) \int_{\mathcal{K}}^L\sigma \Theta \big( \frac{L(\psi(\overline{m}) - \mathcal{K}) + (L-\psi(\overline{m}))\xi }{L-\mathcal{K}} \big)\dif \xi\notag\\
 =& \int_0^{\overline{m}}\frac{1-\overline{M}_+^2(\eta)}{\bar{\rho}_+^2(\eta) \bar{q}_+^3(\eta)} A_+(\eta) \sigma p_{ex}
   \big(\frac{m}{\overline{m}}\int_{0}^{\eta}\displaystyle\frac{1}{\rho_+ q_+\cos\theta_+ (L,t)} \dif t\big) \dif\eta\notag\\
    &-\int_{0}^{\overline{m}}\int_{\mathcal{K}}^{L} F_1\dif \xi \dif \eta.
\end{align}
Moreover, the solution, if it exists, satisfies
  \begin{align}\label{pthetaqses}
&\|(\delta p_+,\delta \theta_+) \|_{1,\alpha;\Omega_+^{\mathcal{K}}}^{(-\alpha;\{Q_i\})} +  \|(\delta q_+, \delta S_+)\|_{1,\alpha;\Omega_+^{\mathcal{K}}}^{(-\alpha;\{\overline{\Gamma_0^{\mathcal{K}}}\cup \overline{\Gamma_{\overline{m}}^{\mathcal{K}}}\})} +\| {\psi}'\|_{1,\alpha;\Gamma_s^{\mathcal{K}}}^{(-\alpha;\{Q_1,Q_4\})}\notag\\
\leq & C\Big(\sum_{k=1}^3 \|F_k \|_{0,\alpha;\Omega_+^{\mathcal{K}}}^{(1-\alpha;\{\overline{\Gamma_0^{\mathcal{K}}}\cup \overline{\Gamma_{\overline{m}}^{\mathcal{K}}}\})}  + \sigma \|p_{ex}\|_{1,\alpha;\Gamma_{ex}}^{(-\alpha;\{Q_2, Q_3\})} \notag\\
&\qquad + \sigma \|\Theta\|_{C^{2,\alpha}(\Gamma_{\overline{m}}^{\mathcal{K}})} +\sum_{j=1}^4 \|h_j\|_{1,\alpha;\Gamma_s^{\mathcal{K}}}^{(-\alpha;\{Q_1, Q_4\})} \Big),
\end{align}
where the constant $C$ depends on $\overline{U}_{\pm}$, $L$ and $\alpha$.

\end{thm}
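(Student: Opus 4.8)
The plan is to exploit the block-triangular structure of the linearized system. First, \eqref{Rrefeq4} together with the third component of the shock relation \eqref{As-1H} gives $\delta S_+(\xi,\eta)=h_3(\eta)$ outright, and \eqref{h4==} gives $\delta\psi'=h_4$; in the linearized problem both are prescribed data, and the first two components of \eqref{As-1H} likewise prescribe the traces $\delta p_+(\mathcal K,\eta)$ and $\delta q_+(\mathcal K,\eta)$ on the fixed shock front, so \eqref{Assumph} immediately accounts for the $\delta S_+$, $\psi'$ and shock-trace contributions to \eqref{pthetaqses}. The interior value of $\delta q_+$ will be recovered at the very end by integrating \eqref{Rrefeq3} in $\xi$ from $\mathcal K$ (using $\partial_\xi\delta S_+=0$); since its $\eta$-derivative then inherits the $\xi$-antiderivative of $\partial_\eta\delta\theta_+$, which is only of order $\dist^{\alpha-1}$ along the whole of $\Gamma_0\cup\Gamma_{\overline{m}}$, this explains why $\delta q_+$ and $\delta S_+$ must be estimated in the norm weighted at $\overline{\Gamma_0^{\mathcal K}}\cup\overline{\Gamma_{\overline{m}}^{\mathcal K}}$ rather than at the corners.

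The heart of the matter is the two-dimensional subsystem \eqref{Rrefeq1}--\eqref{Rrefeq2} for $(\delta p_+,\delta\theta_+)$. Since $1-\overline{M}_+^2>0$ behind the shock, I introduce a potential $\Psi$ by setting $A_+\delta\theta_+=\partial_\xi\Psi$ and $\frac{1-\overline{M}_+^2}{\bar\rho_+^2\bar q_+^3}A_+\,\delta p_+=\partial_\eta\Psi+\int_{\mathcal K}^{\xi}F_1\,d\tau$, so that \eqref{Rrefeq1} is satisfied identically; substitution into \eqref{Rrefeq2} turns it into a uniformly elliptic divergence-form equation for $\Psi$ with coefficients depending only on $\eta$, the nonlocal term of \eqref{Rrefeq2} contributing a zeroth-order term proportional to $g^2\Psi$ together with a known function of $\eta$ (built from the already-known trace $\Psi(\mathcal K,\cdot)$). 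Because $g$ is generic, this $g^2$-term is not a perturbation by virtue of small $g$; it is handled instead by the auxiliary-problem device announced in the introduction — one freezes the $g^2\Psi$ term at the previous iterate and solves the resulting pure principal-part elliptic problem, the length restriction \eqref{assumpthm}, through a Poincar\'e-type inequality in the thin $\xi$-direction, furnishing the contraction. The four boundary conditions all become conditions on the \emph{tangential} derivative of $\Psi$: $\delta\theta_+=0$ on $\Gamma_0$ and $\delta\theta_+=\sigma\Theta$ on $\Gamma_{\overline{m}}$ prescribe $\partial_\xi\Psi$ there, while the shock condition $\delta p_+=\frac{\bar\rho_+\bar q_+^2}{1-\overline{M}_+^2}h_1$ on $\Gamma_s^{\mathcal K}$ and the exit condition $\delta p_+=\sigma p_{ex}$ on $\Gamma_{ex}$ prescribe $\partial_\eta\Psi$ there. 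After one integration along each edge, $\Psi$ is thus prescribed on $\partial\Omega_+^{\mathcal K}$ up to four additive edge-constants which are linked at three of the four corners $Q_1,\dots,Q_4$, leaving one harmless global constant (invisible to $\delta p_+,\delta\theta_+$) and one remaining closing condition at the fourth corner.

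That closing condition is exactly the solvability condition \eqref{solvabilityhat}: equivalently, integrating \eqref{Rrefeq1} over $\Omega_+^{\mathcal K}$, applying the divergence theorem and inserting the four boundary conditions, the boundary terms evaluate to $\int_0^{\overline{m}}\frac{1-\overline{M}_+^2}{\bar\rho_+^2\bar q_+^3}A_+\sigma p_{ex}$ (exit), $-\int_0^{\overline{m}}\frac{A_+}{\bar\rho_+\bar q_+}h_1$ (shock), $-\int_{\mathcal K}^{L}A_+(\overline{m})\sigma\Theta$ (upper wall) and $0$ (lower wall), which together with $\int_{\Omega_+^{\mathcal K}}F_1$ rearrange into precisely \eqref{solvabilityhat}. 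This proves necessity; conversely, once \eqref{solvabilityhat} holds the boundary data for $\Psi$ is consistent, so after fixing the global constant (say $\Psi\equiv 0$ on $\Gamma_0$) the Dirichlet problem for the elliptic equation has a unique solution. Near each $Q_i$ two straight edges meet at a right angle carrying boundary conditions of different type or different data, so $\Psi$ is only $C^{0,\alpha}$ up to the corners and $C^{1,\alpha}$ in the interior — which is exactly what the weighted norm $\|\cdot\|_{1,\alpha;\Omega_+^{\mathcal K}}^{(-\alpha;\{Q_i\})}$ records — and the bound on $(\delta p_+,\delta\theta_+)$ in \eqref{pthetaqses} follows from the weighted Schauder theory for such plane corner problems; the bounds for $\delta q_+$, $\delta S_+$ and $\psi'$ then come from the explicit recovery formulas. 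Uniqueness is immediate: for the homogeneous problem \eqref{solvabilityhat} reads $0=0$ and the homogeneous Dirichlet problem for $\Psi$ has only the zero solution, forcing $\delta p_+=\delta\theta_+=0$ and hence $\delta q_+=\delta S_+=\delta\psi'=0$.

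\textbf{Main obstacle.} The delicate point is the reduction above: controlling the gravity-induced zeroth-order/nonlocal term \emph{uniformly in the large parameter $g$} while keeping the Fredholm structure intact, so that the single scalar constraint \eqref{solvabilityhat} — and nothing more complicated — is what emerges. The auxiliary problem and the precise threshold on $L$ in \eqref{assumpthm} are designed for exactly this, and the corner analysis must be run with care because the four corners carry four different combinations of boundary data, and it is there that the $(-\alpha)$-weight — reflecting the optimal $C^{0,\alpha}$ regularity at the shock--wall intersection — originates.
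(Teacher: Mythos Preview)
Your derivation of the solvability condition \eqref{solvabilityhat} by integrating \eqref{Rrefeq1} over $\Omega_+^{\mathcal K}$ and reading off the four boundary contributions is correct and in fact more direct than the paper's route (which obtains it as the corner-matching condition $\Psi(\mathcal K,\overline{m})$ from the shock side equals $\Psi(\mathcal K,\overline{m})$ from the wall side). Your reduction of $\delta S_+,\delta q_+,\delta\psi'$ to data and your explanation of why $(\delta q_+,\delta S_+)$ carry the wall-weighted norm are also right.

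Where you diverge from the paper is in the elliptic core. You propose a \emph{single} potential $\Psi$ absorbing $F_1$ through an $\int_{\mathcal K}^{\xi}F_1$ correction in the $\partial_\eta\Psi$--$\delta p_+$ relation, and then an iteration that freezes $g^2\Psi$. The paper instead performs a \emph{two-potential split}: an auxiliary problem \eqref{sharpeq1}--\eqref{deltaptheta2} for $(\delta p_+^\sharp,\delta\theta_+^\sharp)$, carrying a tunable constant $H$ in its shock datum chosen precisely so that the Neumann problem for a potential $\Phi$ (built from \eqref{sharpeq2}, so that $F_1$ sits undifferentiated as source) is solvable; then a Dirichlet problem for $\Psi$ built from the now source-free first equation of the hatted remainder. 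This split is what the introduction calls the auxiliary-problem device; it is \emph{not} the freeze-and-iterate scheme you describe. The payoff of the split is that the $g^2$ term lands only in the Dirichlet problem, where \eqref{assumpthm} makes the bilinear form coercive \emph{directly} via Poincar\'e (see \eqref{coercive5D}); existence is then Lax--Milgram, $L^\infty$ and corner behaviour come from explicit comparison barriers (the functions $\vartheta$ and $r^{1+\alpha}\sin(\cdot)$), and the weighted $C^{2,\alpha}$ bound from scaling---no iteration is used. Your iteration route is in principle workable, but you would have to run the contraction in the weighted H\"older norm rather than $L^2$, and your remark that ``the homogeneous Dirichlet problem for $\Psi$ has only the zero solution'' is not automatic: the $+g^2$ zeroth-order term has the wrong sign for the maximum principle, and uniqueness genuinely relies on \eqref{assumpthm} keeping $g^2$ below the first Dirichlet eigenvalue.

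One small slip: $\Psi$ is globally $C^{1,\alpha}$ up to the corners (the paper proves $\|\widehat\Psi\|_{C^{1,\alpha}(\overline{\Omega_+^{\mathcal K}})}$ in \eqref{widehatPsi1alpha} and then $\|\widehat\Psi\|_{2,\alpha}^{(-1-\alpha;\{Q_i\})}$ in \eqref{hatPsiWN}); it is $\nabla\Psi\sim(\delta p_+,\delta\theta_+)$ that is only $C^{0,\alpha}$ at the corners, which is what the $(-\alpha)$-weighted $C^{1,\alpha}$ norm records.
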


\begin{proof}
This proof is divided into three steps.

\emph{Step 1:} By \eqref{Rrefeq3}-\eqref{Rrefeq4} and \eqref{As-1H}, one has
\begin{align}
  \delta {S}_+(\xi,\eta) =& \delta{S}_+(\mathcal{K}, \eta) = {h}_3(\eta) ,\label{S+=eq}\\
  \delta{q}_+(\xi,\eta)=&\frac{\bar{q}_+(\eta)}{1-\overline{M}_+^2(\eta)}{h}_1(\eta)+ \frac{1}{\bar{\rho}_+(\eta) \bar{q}_+(\eta)}{h}_2(\eta) -
  \frac{1}{\bar{\rho}_+(\eta)\bar{q}_+(\eta)}\delta{p}_+(\xi,\eta)\notag\\
  & - \frac{g}{\bar{q}_+(\eta)} \int_{\mathcal{K}}^{\xi} \delta{\theta}_+(\tau,\eta)\dif \tau + \frac{1}{\bar{q}_+(\eta)}\int_{\mathcal{K}}^{\xi}F_3(\delta U_+, \delta\psi', \psi(\overline{m}); \mathcal{K}) \dif \tau.\label{q+=eq}
\end{align}
Thus, it suffices to solve $(\delta p_+, \delta \theta_+)$ to equations \eqref{Rrefeq1}-\eqref{Rrefeq2} with boundary conditions \eqref{expre}-\eqref{eq:theta4+re} and \eqref{As-1H}.
An auxiliary problem is introduced for $(\delta{p}_+^{\sharp}, \delta{\theta}_+^{\sharp})$ as  
\begin{align}
  &\partial_{\xi} \big( \frac{1-\overline{M}_+^2(\eta)}{\bar{\rho}_+^2(\eta) \bar{q}_+^3(\eta)} A_+(\eta) \delta{p}_+^{\sharp}\big) - \partial_{\eta}\big( A_+(\eta) \delta{\theta}_+^{\sharp}  \big)= F_1,\label{sharpeq1}\\
& \partial_{\xi}\big(\frac{\bar{q}_+(\eta)}{ A_+(\eta) } \delta{\theta}_+^{\sharp} \big) + \partial_{\eta} \big(\frac{1}{A_+(\eta)}\delta{p}_+^{\sharp}\big)=0,\label{sharpeq2}
\end{align}
with the boundary conditions:
\begin{align}
   \delta{p}_+^{\sharp}(\mathcal{K}, \eta) =& \frac{\bar{\rho}_+(\eta)\bar{q}_+^2(\eta)}{1 - \overline{M}_+^2(\eta)}\sigma  H ,\quad \delta{p}_+^{\sharp}(L,\eta) = 0,\label{deltaptheta1}\\
  \delta{\theta}_+^{\sharp}(\xi,0) =& 0,\quad \delta{\theta}_+^{\sharp}(\xi,\overline{m}) = 0,\label{deltaptheta2}
\end{align}
where $H$ is a constant and satisfies
\begin{align}\label{NeumanSolvability}
  \int_{0}^{\overline{m}}\int_{\mathcal{K}}^{L} F_1\dif \xi \dif \eta =-\sigma H \int_0^{\overline{m}}\frac{ A_+(\eta) }{\bar{\rho}_+(\eta) \bar{q}_+(\eta)} \dif \eta.
\end{align}
Let $(\widehat{\delta{p}_+},\, \widehat{\delta{\theta}_+}) \defs (\delta{p}_+ - \delta{p}_+^{\sharp}, \, \delta{\theta}_+ - \delta{\theta}_+^{\sharp})$. By applying \eqref{Rrefeq1}-\eqref{Rrefeq2}, \eqref{expre}-\eqref{eq:theta4+re}, \eqref{As-1H} and (\eqref{sharpeq1}-\eqref{deltaptheta2}), we find that $(\widehat{\delta{p}_+},\, \widehat{\delta{\theta}_+})$ satisfies the following equations:
\begin{align}
  &\partial_{\xi} \big( \frac{1-\overline{M}_+^2(\eta)}{\bar{\rho}_+^2(\eta) \bar{q}_+^3(\eta)} A_+(\eta) \widehat{\delta{p}_+}\big) - \partial_{\eta}\big( A_+(\eta) \widehat{\delta{\theta}_+} \big)= 0,\label{NLdoteq1}\\
 &\partial_{\xi}\big(\frac{\bar{q}_+(\eta)}{ A_+(\eta) } \widehat{\delta{\theta}_+} \big) + \partial_{\eta} \big(\frac{1}{A_+(\eta)}\widehat{\delta{p}_+}\big) + \frac{g^2}{ \bar{q}_+^3(\eta)A_+(\eta)}\int_{\mathcal{K}}^{\xi} \widehat{\delta{\theta}_+}(s, \eta)\dif s \notag\\
 =&F_2 -  \frac{g^2}{ \bar{q}_+^3(\eta)A_+(\eta)}\int_{\mathcal{K}}^{\xi} \delta{\theta}_+^{\sharp}(s, \eta)\dif s,\label{NLdoteq2}
\end{align}
with the boundary conditions:
\begin{align}
&\widehat{\delta p_+} (\mathcal{K}, \eta) = \frac{\bar{\rho}_+(\eta)\bar{q}_+^2(\eta)}{1 - \overline{M}_+^2(\eta)}\big(h_1(\eta) -\sigma  H\big),\quad  \text{on} \quad \Gamma_s^{\mathcal{K}},\label{NRHg1+}\\
&\widehat{\delta p_+} (L, \eta) = \sigma p_{ex}\big(\frac{m}{\overline{m}}\int_{0}^{\eta}\displaystyle\frac{1}{\rho_+ q_+\cos\theta_+ (L, \tau)} \dif \tau\big), \quad  \text{on} \quad \Gamma_{ex},\\
&\widehat{\delta \theta_+} (\xi,0) = 0,\quad \text{on}\quad \Gamma_0^{\mathcal{K}},\label{Neq:theta2+re}\\
&\widehat{\delta \theta_+} (\xi,\overline{m}) = \sigma\Theta\big( \frac{L(\psi(\overline{m}) - \mathcal{K}) + (L-\psi(\overline{m}))\xi }{L-\mathcal{K}} \big),\quad \text{on} \quad \Gamma_{\overline{m}}^{\mathcal{K}}.
\label{Neq:theta4+re}
\end{align}
Then we will establish the well-posedness of problems \eqref{sharpeq1}-\eqref{deltaptheta2} and \eqref{NLdoteq1}-\eqref{Neq:theta4+re}, respectively.

\emph{Step 2:} In this step, we will establish the well-posedness of problem \eqref{sharpeq1}-\eqref{deltaptheta2}.

Notice that \eqref{sharpeq2} yields that there exists a function ${\Phi}$ such that
\begin{align}\label{deltPhi}
  \nabla {\Phi} = (\partial_{\xi} {\Phi}, \, \partial_{\eta} {\Phi}) = \big( \frac{1}{A_+(\eta)}\delta{p}_+^{\sharp},\, - \frac{\bar{q}_+(\eta)}{ A_+(\eta) } \delta{\theta}_+^{\sharp}\big).
\end{align}
Substituting \eqref{deltPhi} into \eqref{sharpeq1}, one has
\begin{align}\label{ellipticPhi}
  \partial_{\xi} \big( \frac{1-\overline{M}_+^2(\eta)}{\bar{\rho}_+^2(\eta) \bar{q}_+^3(\eta)} A_+^2(\eta)\partial_{\xi} {\Phi}\big) + \partial_{\eta}\big( \frac{A_+^2(\eta)}{\bar{q}_+(\eta)} \partial_{\eta} {\Phi}\big)= F_1,\quad \text{in}\quad \Omega_+^{\mathcal{K}}.
\end{align}
The boundary conditions \eqref{deltaptheta1}-\eqref{deltaptheta2} become
\begin{align}
  \partial_{\xi} \Phi(\mathcal{K}, \eta) =&\frac{1}{A_+(\eta)}\frac{\bar{\rho}_+(\eta)\bar{q}_+^2(\eta)}{1 - \overline{M}_+^2(\eta)} \sigma H ,\quad \partial_{\xi} \Phi(L,\eta) = 0,\label{Phibry1}\\
  \partial_{\eta}\Phi(\xi,0) =& 0,\quad \partial_{\eta} \Phi(\xi,\overline{m}) = 0.\label{Phibry2}
\end{align}

First, we prove the unique existence of the weak solution $\Phi$ to the problem \eqref{ellipticPhi}-\eqref{Phibry2}.
Notice that $\Phi$ is a weak solution to the problem \eqref{ellipticPhi}-\eqref{Phibry2} if and only if for any test function $\zeta \in H^1 (\Omega_+^{\mathcal{K}})$,
\begin{align}\label{zetaweak}
&-\sigma H \int_{0}^{\overline{m}} \frac{ A_+(\eta)}{\bar{\rho}_+(\eta) \bar{q}_+(\eta)} \zeta(\mathcal{K}, \eta)\dif \eta\notag\\
&-\int_{\Omega_+^{\mathcal{K}}}\big(\frac{1-\overline{M}_+^2(\eta)}{\bar{\rho}_+^2(\eta) \bar{q}_+^3(\eta)} A_+^2(\eta)\partial_{\xi} {\Phi} \partial_{\xi}\zeta + \frac{A_+^2(\eta)}{\bar{q}_+} \partial_{\eta} {\Phi} \partial_{\eta}\zeta \big)\dif\xi \dif \eta \notag\\
= &\int_{\Omega_+^{\mathcal{K}}} F_1 \zeta(\xi,\eta)\dif\xi \dif \eta.
\end{align}
Choosing $\zeta=1$ in \eqref{zetaweak}, it follows that \eqref{NeumanSolvability}.

Next, we will prove that if \eqref{NeumanSolvability} holds, there exists a unique weak solution $\Phi$ to problem \eqref{ellipticPhi}-\eqref{Phibry2}.
Applying trace theorem (see \cite[Theorem 1.5.2.1]{PG}), there exists a function $\mathcal{G}(\xi,\eta)\in H^2( \Omega_+^{\mathcal{K}})$ such that
\begin{align}
  \widehat{\Phi}\defs \Phi - \mathcal{G}
\end{align}
satisfies the following boundary value problem:
\begin{align}\label{hateq=}
&\partial_{\xi} \big( \frac{1-\overline{M}_+^2(\eta)}{\bar{\rho}_+^2(\eta) \bar{q}_+^3(\eta)} A_+^2(\eta)\partial_{\xi} \widehat{\Phi}\big) + \partial_{\eta}\big( \frac{A_+^2(\eta)}{\bar{q}_+(\eta)} \partial_{\eta} \widehat{\Phi}\big)\notag\\
=& F_1 - \partial_{\xi} \big( \frac{1-\overline{M}_+^2(\eta)}{\bar{\rho}_+^2(\eta) \bar{q}_+^3(\eta)} A_+^2(\eta)\partial_{\xi} \mathcal{G}\big) + \partial_{\eta}\big( \frac{A_+^2(\eta)}{\bar{q}_+(\eta)} \partial_{\eta} \mathcal{G}\big) \defs \widehat{F_1},\quad \text{in}\quad \Omega_+^{\mathcal{K}},\\
&\partial_{\xi}\widehat{\Phi}(\mathcal{K}, \eta) =0 ,\, \partial_{\xi} \widehat{\Phi}(L,\eta) = 0,\,
\partial_{\eta}\widehat{\Phi}(\xi,0) = 0,\, \partial_{\eta} \widehat{\Phi}(\xi,\overline{m}) = 0.
\end{align}
\eqref{NeumanSolvability} yields that
\begin{align}\label{intwihatF=0}
 \int_{\Omega_+^{\mathcal{K}}} \widehat{F_1} \dif \xi \dif\eta =0.
\end{align}
Define a subspace of $H^1(\Omega_+^{\mathcal{K}})$ as
$
\mathcal{S} \defs \big\{\Phi_* \in H^1 (\Omega_+^{\mathcal{K}})\big| \int_{\Omega_+^{\mathcal{K}}} \Phi_* \dif \xi \dif \eta =0\big\}.
$
Let
\begin{align*}
\mathcal{B}[\widehat{\Phi}, \zeta] \defs -\int_{\Omega_+^{\mathcal{K}}}
\big(\frac{1-\overline{M}_+^2(\eta)}{\bar{\rho}_+^2(\eta) \bar{q}_+^3(\eta)} A_+^2(\eta)\partial_{\xi} \widehat{\Phi}\partial_{\xi}\zeta + \frac{A_+^2(\eta)}{\bar{q}_+(\eta)} \partial_{\eta} \widehat{\Phi} \partial_{\eta}\zeta \big)\dif\xi \dif \eta.
  \end{align*}
It is easy to see that for any $\widehat{\Phi}\in\mathcal{S}$ and $\zeta\in H^1(\Omega_+^{\mathcal{K}})$
\begin{align}
  | \mathcal{B}[\widehat{\Phi}, \zeta]|\leq& \big( \big\|\frac{1-\overline{M}_+^2(\eta)}{\bar{\rho}_+^2(\eta) \bar{q}_+^3(\eta)} A_+^2(\eta)\big\|_{L^\infty([0,\overline{m}])} +  \big\|\frac{A_+^2(\eta)}{\bar{q}_+} \big\|_{L^\infty([0,\overline{m}])} \big)  \int_{\Omega_+^{\mathcal{K}}} |D \widehat{\Phi}| |D \zeta|\dif \xi\dif \eta\notag\\
  \leq& C\| \widehat{\Phi}\|_{H^1(\Omega_+^{\mathcal{K}})} \|\zeta\|_{H^1(\Omega_+^{\mathcal{K}})},
\end{align}
where the positive constant $C$ only depends on $\overline{U}_+$.
By the Poincar\'{e} inequality, we have for any $\widehat{\Phi}\in\mathcal{S}$
\begin{align}
\|\widehat{\Phi}\|_{L^2(\Omega_+^{\mathcal{K}})}^2 = \int_{\Omega_+^{\mathcal{K}}}|\widehat{\Phi} - \int_{\Omega_+^{\mathcal{K}}} \widehat{\Phi} \dif \xi \dif \eta|^2 \dif\xi\dif\eta\leq C\int_{\Omega_+^{\mathcal{K}}}|D\widehat{\Phi}|^2 \dif \xi\dif\eta.
\end{align}
Thus for any $\widehat{\Phi} \in\mathcal{S}$
\begin{align}
  |\mathcal{B} [\widehat{\Phi},\widehat{\Phi}]| \geq C \|\widehat{\Phi}\|_{H^1(\Omega_+^{\mathcal{K}})}^2.
\end{align}
By applying the Lax-Milgram theorem, for all $\widehat{F_1}\in L^2$ with \eqref{intwihatF=0}, there exists a unique $\Phi_* \in \mathcal{S}$ such that for any $\zeta\in H^1(\Omega_+^{\mathcal{K}})$
\begin{align*}
(\widehat{F_1}, \zeta)_{L^2(\Omega_+^{\mathcal{K}})} = &(\widehat{F_1}, \int_{\Omega_+^{\mathcal{K}}} \zeta \dif \xi\dif\eta)_{L^2(\Omega_+^{\mathcal{K}})} + (\widehat{F_1}, \zeta - \int_{\Omega_+^{\mathcal{K}}} \zeta \dif \xi\dif\eta)_{L^2(\Omega_+^{\mathcal{K}})} \notag\\
= &B[\Phi_*, \zeta - \int_{\Omega_+^{\mathcal{K}}} \zeta \dif \xi\dif\eta] =B[\Phi_*, \zeta].
\end{align*}
Therefore, $\Phi_* \in \mathcal{S} \subset H^1(\Omega_+^{\mathcal{K}})$ is a weak solution. Notice that $(\Phi_* + \text{constant.})$ is also a weak solution and the solution is unique in $\mathcal{S}$. Thus, $\Phi_* \in H^1(\Omega_+^{\mathcal{K}})$ is unique up to a constant.
Moreover, choose the solution $\widehat{\Phi}$ with the additional condition $\int_{\Omega_+^{\mathcal{K}}} \widehat{\Phi} \dif \xi \dif\eta=0$, then the following estimate holds
\begin{align}
\|\widehat{\Phi}\|_{H^1(\Omega_+^{\mathcal{K}})} \leq C \| \widehat{F_1} \|_{L^2(\Omega_+^{\mathcal{K}})}.
\end{align}
Applying \eqref{NeumanSolvability} and \eqref{hateq=}, it follows that
\begin{align}\label{H1es=}
\|{\Phi}\|_{H^1(\Omega_+^{\mathcal{K}})} \leq C \| F_1 \|_{0,\alpha;\Omega_+^{\mathcal{K}}}^{(1-\alpha;\{\overline{\Gamma_0^{\mathcal{K}}}\cup \overline{\Gamma_{\overline{m}}^{\mathcal{K}}}\})}.
\end{align}
By \eqref{H1es=} and \cite[Theorem 5.36]{GM13}, one has
\begin{align}\label{Linftyes=}
  \|{\Phi}\|_{L^\infty(\Omega_+^{\mathcal{K}})} \leq C \| F_1 \|_{0,\alpha;\Omega_+^{\mathcal{K}}}^{(1-\alpha;\{\overline{\Gamma_0^{\mathcal{K}}}\cup \overline{\Gamma_{\overline{m}}^{\mathcal{K}}}\})}.
\end{align}

Now we estimate $\Phi$ near the corner $(\mathcal{K}, 0)$, and the other corners can be treated similarly.
Let $\mathcal{P}$ be a small domain including the point $(\mathcal{K}, 0)$, whose boundary is
smooth except the point $(\mathcal{K}, 0)$. By applying $\partial_{\eta}\Phi(\xi,0) = 0$ in \eqref{Phibry2}, one can extend the solution across the nozzle wall evenly by defining
\begin{align*}
\widetilde{\Phi}(\xi,\eta) \defs \begin{cases}
\Phi(\xi,\eta),\quad 0<\eta<\overline{m},\\
 \Phi (\xi, -\eta),\quad \eta\leq 0.
\end{cases}
\end{align*}
By \eqref{defsF1}, we also define
\begin{align*}
  \widetilde{f_{kj}}(\xi,\eta) \defs \begin{cases}
f_{kj}(\xi,\eta),\quad 0<\eta<\overline{m},\\
 f_{kj} (\xi, -\eta),\quad \eta\leq 0,
\end{cases}
\widetilde{H^{\sharp}}(\mathcal{K}, \eta) \defs \begin{cases}
H^{\sharp}(\mathcal{K}, \eta),\quad 0<\eta<\overline{m},\\
H^{\sharp}(\mathcal{K}, -\eta),\quad \eta \leq 0,
\end{cases}
\end{align*}
where $k=1, j=1,2,3$ and $k=3, j=1,2$, $H^{\sharp}(\mathcal{K}, \eta)\defs \frac{1}{A_+(\eta)}\frac{\bar{\rho}_+(\eta)\bar{q}_+^2(\eta)}{1 - \overline{M}_+^2(\eta)} \sigma H$.
Let $\widetilde{\mathcal{P}} \defs \mathcal{P} \cup \{(\xi, -\eta): (\xi,\eta) \in \mathcal{P}  \}$. Then $\widetilde{\Phi}$ is the solution to the following boundary value problem
\begin{align}
&\partial_{\xi} \big( \frac{1-\overline{M}_+^2(\eta)}{\bar{\rho}_+^2(\eta) \bar{q}_+^3(\eta)} A_+^2(\eta)\partial_{\xi}\widetilde{\Phi}\big) + \partial_{\eta}\big( \frac{A_+^2(\eta)}{\bar{q}_+(\eta)} \partial_{\eta} \widetilde{\Phi}\big)= \widetilde{F_1},\quad \text{in}\quad \widetilde{\mathcal{P}},\label{tildePhi=}\\
&\partial_{\xi} \widetilde{\Phi}(\mathcal{K}, \eta) =\widetilde{H^{\sharp}}(\mathcal{K}, \eta), \quad \text{on}\quad \partial\widetilde{\mathcal{P}}\cap \{\xi=\mathcal{K}\},\\
& \widetilde{\Phi} =  \widetilde{\Phi}, \quad \text{on}\quad \partial\widetilde{\mathcal{P}}\backslash \{\xi=\mathcal{K}\}.
\end{align}
By applying the Schauder interior and boundary estimates in \cite[Theorem 8.32, Corollary 8.36]{GT} and \eqref{Linftyes=},
for any smooth domain $\widetilde{\mathcal{P}}^{\sharp} \subset\widetilde{\mathcal{P}}$ and \eqref{defsF1}, \eqref{Assumpf}, one has
\begin{align}\label{pPes}
\|\widetilde{\Phi}\|_{C^{1,\alpha}(\widetilde{\mathcal{P}}^{\sharp})} \leq& C \big(\sum_{j=1}^3 \| \widetilde{f_{1j}}\|_{C^{0,\alpha}(\widetilde{\mathcal{P}})} + \sum_{j=1}^2\|\widetilde{f_{3j}}\|_{C^{0,\alpha}(\widetilde{\mathcal{P}})} + \|\widetilde{H^\sharp}\|_{C^{0,\alpha}(\partial\widetilde{\mathcal{P}}\cap \{\xi=\mathcal{K}\})} + \|\widetilde{\Phi}\|_{C^{0,\alpha}(\partial \widetilde{\mathcal{P}} \backslash \{\xi = \mathcal{K}\})} \big)\notag\\
\leq & C \big(\| \widetilde{F_1} \|_{0,\alpha;\widetilde{\mathcal{P}}}^{(1-\alpha;\partial\widetilde{\mathcal{P}}\cap \{\xi=\mathcal{K}\})} + \|\widetilde{H^\sharp}\|_{C^{0,\alpha}(\partial\widetilde{\mathcal{P}}\cap \{\xi=\mathcal{K}\})} + \|\widetilde{\Phi}\|_{C^{0,\alpha}(\partial \widetilde{\mathcal{P}} \backslash \{\xi = \mathcal{K}\})} \big).
\end{align}
It yields that
\begin{align}
\|\Phi\|_{C^{1,\alpha}(\widetilde{\mathcal{P}}^{\sharp}\cap \overline{\mathcal{P}})} \leq \|\widetilde{\Phi}\|_{C^{1,\alpha}(\widetilde{\mathcal{P}}^{\sharp})} \leq C \| F_1 \|_{0,\alpha;\Omega_+^{\mathcal{K}}}^{(1-\alpha;\{\overline{\Gamma_0^{\mathcal{K}}}\cup \overline{\Gamma_{\overline{m}}^{\mathcal{K}}}\})}.
\end{align}
Then by Schauder
estimates in \cite[Chapter 8]{GT} and the standard scaling argument, one has
\begin{align}\label{Phies==}
\|\Phi\|_{2,\alpha;\Omega_+^{\mathcal{K}}}^{(-1-\alpha;\partial\Omega_+^{\mathcal{K}})} \leq C \| F_1 \|_{0,\alpha;\Omega_+^{\mathcal{K}}}^{(1-\alpha;\{\overline{\Gamma_0^{\mathcal{K}}}\cup \overline{\Gamma_{\overline{m}}^{\mathcal{K}}}\})}.
\end{align}
Applying \eqref{deltPhi} and \eqref{Phies==}, it yields that \eqref{pthetasharp}.
Thus, there exists a unique solution $(\delta{p}_+^{\sharp}, \delta{\theta}_+^{\sharp})$ to the problem \eqref{sharpeq1}-\eqref{deltaptheta2} if and only if \eqref{NeumanSolvability} holds.
In addition,
\begin{align}\label{pthetasharp}
\|\delta{p}_+^{\sharp}\|_{1,\alpha;\Omega_+^{\mathcal{K}}}^{(-\alpha;\{Q_i\})} + \|\delta{\theta}_+^{\sharp}\|_{1,\alpha;\Omega_+^{\mathcal{K}}}^{(-\alpha;\{Q_i\})} \leq C\| F_1 \|_{0,\alpha;\Omega_+^{\mathcal{K}}}^{(1-\alpha;\{\overline{\Gamma_0^{\mathcal{K}}}\cup \overline{\Gamma_{\overline{m}}^{\mathcal{K}}}\})},
\end{align}
where the constant $C$ depends on $\overline{U}_{\pm}$, $L$ and $\alpha$.

\emph{Step 3:} In this step, we will establish the well-posedness of the problem \eqref{NLdoteq1}-\eqref{Neq:theta4+re}.

Notice that \eqref{NLdoteq1} yields that there exists a function $\Psi$ such that
\begin{align}\label{NPsi}
  \nabla \Psi = (\partial_{\xi} \Psi,\, \partial_{\eta}\Psi) = \big(  A_+(\eta) \widehat{\delta{\theta}_+},\,  \frac{1-\overline{M}_+^2(\eta)}{\bar{\rho}_+^2(\eta) \bar{q}_+^3(\eta)} A_+(\eta) \widehat{\delta{p}_+}\big).
\end{align}
Substituting \eqref{NPsi} into \eqref{NLdoteq2}, one has
\begin{align}
 & \partial_{\xi}\big(\frac{\bar{q}_+(\eta)}{ A_+^2(\eta) } \partial_{\xi} \Psi \big) + \partial_{\eta} \big(\frac{1}{A_+^2(\eta)} \frac{\bar{\rho}_+^2(\eta) \bar{q}_+^3(\eta)}{1-\overline{M}_+^2(\eta)}\partial_{\eta} \Psi \big) + \frac{g^2}{ \bar{q}_+^3(\eta)A_+^2(\eta)}
 \Psi \notag\\
=&F_2 - \frac{g^2}{ \bar{q}_+^3(\eta)A_+(\eta)}\int_{\mathcal{K}}^{\xi} \delta{\theta}_+^{\sharp} (\tau, \eta)\dif \tau + \frac{g^2}{ \bar{q}_+^3(\eta)A_+^2(\eta)} \Psi(\mathcal{K},\eta)\defs \mathcal{F}_2.\label{eq2s}
\end{align}
Without loss of the generality, we assume that ${\Psi}(\mathcal{K}, 0)=0$. Then \eqref{NRHg1+}-\eqref{Neq:theta4+re} yield that
\begin{align}
\Psi(\xi,0) =& 0,\\
   \Psi(\mathcal{K}, \eta) =& \int_0^{\eta}\frac{A_+(t)}{\bar{\rho}_+(t) \bar{q}_+(t)}  \big(h_1(\mathcal{K}, t) - \sigma H\big)\dif t,\\
   \Psi(L, \eta) =& \int_0^{\eta}\frac{1-\overline{M}_+^2(t)}{\bar{\rho}_+^2(t) \bar{q}_+^3(t)} A_+(t) \sigma p_{ex}
   \big(\frac{m}{\overline{m}}\int_{0}^{t}\displaystyle\frac{1}{\rho_+ q_+\cos\theta_+ (L,s)} \dif s\big) \dif t,\\
   \Psi (\xi,\overline{m})
   =&\int_0^{\overline{m}}\frac{1-\overline{M}_+^2(\eta)}{\bar{\rho}_+^2(\eta) \bar{q}_+^3(\eta)} A_+(\eta) \sigma p_{ex}
   \big(\frac{m}{\overline{m}}\int_{0}^{\eta}\displaystyle\frac{1}{\rho_+ q_+\cos\theta_+ (L, s)} \dif s\big) \dif\eta\notag\\
   & -A_+(\eta) \int_{\xi}^L\sigma \Theta\big( \frac{L(\psi(\overline{m}) - \mathcal{K}) + (L-\psi(\overline{m}))t }{L-\mathcal{K}} \big)\dif t.\label{brym=}
\end{align}
By $(\widehat{\delta p_+},\, \widehat{\delta \theta_+})\in C_{1,\alpha}^{(-\alpha;\{Q_i\})}(\Omega_+^{\mathcal{K}})^2 $ and \eqref{NPsi}, it is easy to obtain the following continuous condition
\begin{align}\label{continuousK}
   \Psi (\xi,\overline{m})\Big|_{\xi = \mathcal{K}} = \Psi(\mathcal{K}, \eta)\Big|_{\eta = \overline{m}},
\end{align}
which is exactly \eqref{solvabilityhat}.

By applying \eqref{solvabilityhat},
we will prove that the unique existence of the weak solution $\Psi \in H^1(\overline{\Omega_+^{\mathcal{K}}})$.
By \eqref{solvabilityhat}, one can find a continuous function $\mathcal{G}_*$ of the following form:
\begin{align}
\mathcal{G}_*(\xi,\eta)
  \defs & \Psi(\mathcal{K},\eta) + \frac{\eta}{\overline{m}}\big( \Psi(\xi,\overline{m}) -  \Psi(\mathcal{K},\overline{m})\big) \notag\\
  &- \frac{\xi- \mathcal{K}}{L-\mathcal{K}} \Big(\frac{\eta}{\overline{m}} \big( \Psi(L,\overline{m})- \Psi(\mathcal{K},\overline{m})  \big) - \big(\Psi(L,\eta) - \Psi(\mathcal{K},\eta)\big) \Big).
\end{align}
Let
\begin{align}\label{widehatdef}
 \widehat{\Psi}(\xi,\eta)\defs {\Psi}(\xi,\eta) - \mathcal{G}_*(\xi,\eta).
\end{align}
Then \eqref{eq2s}-\eqref{brym=} become
\begin{align}\label{reeq2hat}
  &\partial_{\xi}\big(\frac{\bar{q}_+(\eta)}{ A_+^2(\eta) } \partial_{\xi} \widehat{\Psi} \big) + \partial_{\eta} \big(\frac{1}{A_+^2(\eta)} \frac{\bar{\rho}_+^2(\eta) \bar{q}_+^3(\eta)}{1-\overline{M}_+^2(\eta)}\partial_{\eta} \widehat{\Psi} \big) + \frac{g^2}{ \bar{q}_+^3(\eta)A_+^2(\eta)}
\widehat{\Psi}\notag\\
=& \mathcal{F}_2 - \frac{\bar{q}_+(\eta)}{ A_+^2(\eta) } \partial_{\xi}^2 \mathcal{G}_* - \partial_{\eta} \big(\frac{1}{A_+^2(\eta)} \frac{\bar{\rho}_+^2(\eta) \bar{q}_+^3(\eta)}{1-\overline{M}_+^2(\eta)}\partial_{\eta} \mathcal{G}_*\big) - \frac{g^2}{ \bar{q}_+^3(\eta)A_+^2(\eta)}
\mathcal{G}_*\notag\\
\defs & \widehat{F}(\xi, \eta),\quad \text{in}\quad {\Omega}_+^{\mathcal{K}},\\
&\widehat{\Psi}=0, \quad \text{on}\quad \partial{\Omega}_+^{\mathcal{K}}.\label{hatPhi=0}
\end{align}
Let $\zeta\in H_0^1({\Omega}_+^{\mathcal{K}})$ be the test function, $\widehat{\Psi}$ is a weak solution of the problem \eqref{reeq2hat}-\eqref{hatPhi=0} if
\begin{align}
  &- \int_{{\Omega}_+^{\mathcal{K}}}\Big( \big(  \frac{\bar{q}_+(\eta)}{ A_+^2(\eta) } \partial_{\xi} \widehat{\Psi}\big) \partial_{\xi} \zeta +  \big(\frac{1}{A_+^2(\eta)} \frac{\bar{\rho}_+^2(\eta) \bar{q}_+^3(\eta)}{1-\overline{M}_+^2(\eta)}\partial_{\eta} \widehat{\Psi} \big) \partial_{\eta}\zeta \Big)\dif \xi \dif \eta\notag\\
  & + \int_{{\Omega}_+^{\mathcal{K}}}\frac{g^2}{ \bar{q}_+^3(\eta)A_+^2(\eta)}
\widehat{\Psi} \zeta \dif \xi \dif \eta =  \int_{{\Omega}_+^{\mathcal{K}}} \widehat{F}\zeta \dif \xi \dif \eta.
\end{align}
For $\widehat{\Psi}, \zeta\in H_0^1({\Omega}_+^{\mathcal{K}})$,
let
\begin{align}
  \mathcal{B}[\widehat{\Psi}, \zeta] \defs& \int_{{\Omega}_+^{\mathcal{K}}}\Big( \big(  \frac{\bar{q}_+(\eta)}{ A_+^2(\eta) } \partial_{\xi} \widehat{\Psi}\big) \partial_{\xi} \zeta +  \big(\frac{1}{A_+^2(\eta)} \frac{\bar{\rho}_+^2(\eta) \bar{q}_+^3(\eta)}{1-\overline{M}_+^2(\eta)}\partial_{\eta} \widehat{\Psi}\big) \partial_{\eta}\zeta \Big)\dif \xi \dif \eta\notag\\
  & - \int_{{\Omega}_+^{\mathcal{K}}}\frac{g^2}{ \bar{q}_+^3(\eta)A_+^2(\eta)}
\widehat{\Psi}\zeta \dif \xi \dif \eta.
\end{align}
It is easy to check that $\mathcal{B}[\widehat{\Psi}, \zeta]$ is bounded. In addition, employing the Poincar\'{e} inequality (see \cite[Proposition 3.10]{GM}), we obtain
\begin{align}\label{coercive5D}
   \mathcal{B}[\widehat{\Psi}, \widehat{\Psi}] =& \int_{{\Omega}_+^{\mathcal{K}}}\Big( \frac{\bar{q}_+(\eta)}{ A_+^2(\eta) } (\partial_{\xi} \widehat{\Psi})^2+  \frac{1}{A_+^2(\eta)} \frac{\bar{\rho}_+^2(\eta) \bar{q}_+^3(\eta)}{1-\overline{M}_+^2(\eta)}(\partial_{\eta} \widehat{\Psi})^2 \Big)\dif \xi \dif \eta\notag\\
  & - \int_{{\Omega}_+^{\mathcal{K}}}\frac{g^2}{ \bar{q}_+^3(\eta)A_+^2(\eta)}
\widehat{\Psi}^2 \dif \xi \dif \eta\notag\\
\geq&  \min\limits_{\eta\in[0,\overline{m}]}\Big\{ \frac{\bar{q}_+(\eta)}{ A_+^2(\eta) } ,\,  \frac{1}{A_+^2(\eta)} \frac{\bar{\rho}_+^2(\eta) \bar{q}_+^3(\eta)}{1-\overline{M}_+^2(\eta)}    \Big\}\frac{1}{L^2}\int_{{\Omega}_+^{\mathcal{K}}}\widehat{\Psi}^2 \dif \xi \dif \eta\notag\\
  & - g^2 \max\limits_{\eta\in[0,\overline{m}]}\Big\{\frac{1}{ \bar{q}_+^3(\eta)A_+^2(\eta)}\Big\}\int_{{\Omega}_+^{\mathcal{K}}}
\widehat{\Psi}^2 \dif \xi \dif \eta\notag\\
\geq &\min\limits_{\eta\in[0,\overline{m}]}\Big\{ \frac{\bar{q}_+(\eta)}{ A_+^2(\eta) } ,\,  \frac{1}{A_+^2(\eta)} \frac{\bar{\rho}_+^2(\eta) \bar{q}_+^3(\eta)}{1-\overline{M}_+^2(\eta)}    \Big\}\frac{1}{2L^2}\int_{{\Omega}_+^{\mathcal{K}}}\widehat{\Psi}^2 \dif \xi \dif \eta,
\end{align}
where the last inequality is deduced by \eqref{assumpthm}.
Then one has $ \mathcal{B}[\widehat{\Psi}, \widehat{\Psi}]> \beta \|\widehat{\Psi}_+\|_{H_0^1(\mathcal{D})}^2$,
where the constant $\beta>0$ depends on $\overline{U}_+$ and $L$.
By the Lax-Milgram, there exists a unique solution $\widehat{\Psi}\in H_0^1({\Omega}_+^{\mathcal{K}})$ to the problem \eqref{reeq2hat}-\eqref{hatPhi=0}. Thus, applying $\Psi(\xi,\eta) =\widehat{\Psi}(\xi,\eta)+ \mathcal{G}_*(\xi,\eta)$, one can obtain the unique existence of the weak solution $\Psi\in H^1({\Omega}_+^{\mathcal{K}})$ to the problem \eqref{eq2s}-\eqref{brym=}.

Then we establish the $L^\infty$ estimate for $\widehat{\Psi}$. Let
\begin{align}
 L\widehat{\Psi}\defs  a_{11}(\eta)\partial_{\xi}^2 \widehat{\Psi} + a_{22}(\eta)\partial_{\eta}^2 \widehat{\Psi} + \partial_{\eta} a_{22}(\eta)\partial_{\eta} \widehat{\Psi} + a_0(\eta)
\widehat{\Psi}
\end{align}
be the left hand side of the equation \eqref{reeq2hat}, where
\begin{align}
  a_{11}(\eta) \defs& \frac{\bar{q}_+(\eta)}{ A_+^2(\eta) } >0, \quad a_{22}(\eta)\defs\frac{1}{A_+^2(\eta)} \frac{\bar{\rho}_+^2(\eta) \bar{q}_+^3(\eta)}{1-\overline{M}_+^2(\eta)}>0,\label{a1122>0}\\
a_0(\eta)\defs& \frac{g^2}{ \bar{q}_+^3(\eta)A_+^2(\eta)}>0.\label{a0>0}
\end{align}
Let
\begin{align}
  \vartheta(\xi,\eta) = a\big( 1+(\overline{m} -\eta)^2 + b\big( \xi^{\alpha} + (L-\xi)^{\alpha}  \big)  \big),
\end{align}
where the constant $0<\alpha <1$, the positive constants $a$ and $b$ will be determined later.
Direct calculations yield that
\begin{align}\label{Lvar=eq}
 L\vartheta =& a_{11}(\eta) \partial_{\xi}^2\vartheta + a_{22}(\eta)\partial_{\eta}^2 \vartheta + \partial_{\eta} a_{22}(\eta) \partial_{\eta}\vartheta+ a_0(\eta)\vartheta\notag\\
 =&-ab \alpha (1-\alpha)a_{11}(\eta)\big( \xi^{\alpha-2} + (L-\xi)^{\alpha-2} \big) + 2a a_{22}(\eta)\notag\\
 & -2a\partial_{\eta} a_{22}(\eta)(\overline{m} - \eta)+ a a_0(\eta)\big( 1+(\overline{m} -\eta)^2 + b\big( \xi^{\alpha} + (L-\xi)^{\alpha}  \big)  \big).
\end{align}
Note
\begin{align}
  \xi^{\alpha -2} + (L - \xi)^{\alpha-2}  > 2L^{\alpha-2},\quad \text{for}\quad 0<\alpha <1.
\end{align}
So \eqref{Lvar=eq} yields that
\begin{align}
  L\vartheta
 \leq & a \big( - b \alpha (1-\alpha)a_{11}(\eta)L^{\alpha-2} + 2 a_{22}(\eta) + 2|\partial_{\eta} a_{22}(\eta)|\overline{m}+ a_0(\eta)( 1+\overline{m}^2)\big)\notag\\
 &+ ab \big( -\alpha (1-\alpha)a_{11}(\eta)L^{\alpha-2} + 2a_0(\eta)L^{\alpha}\big).
\end{align}
Let
\begin{align}
  b= \frac{2\big( 2 \|a_{22}(\eta)\|_{L^\infty([0,\overline{m}])} + 2\overline{m}\|\partial_{\eta} a_{22}(\eta)\|_{L^\infty([0,\overline{m}])}+ ( 1+\overline{m}^2) \|a_0(\eta)\|_{L^\infty([0,\overline{m}])}\big)}{\alpha (1-\alpha)L^{\alpha-2} \min\limits_{\eta\in[0,\overline{m}]}a_{11}(\eta)}.
\end{align}
It is easy to check that
\begin{align}
- \frac12 b \alpha (1-\alpha)a_{11}(\eta)L^{\alpha-2} + 2 a_{22}(\eta) + 2|\partial_{\eta} a_{22}(\eta)|\overline{m}+ a_0(\eta)( 1+\overline{m}^2) < 0.\label{require1eq}
\end{align}
By \eqref{assumpthm}, one has
\begin{align}\label{require2eq}
 -\alpha (1-\alpha)a_{11}(\eta)L^{\alpha-2} + 2a_0(\eta)L^{\alpha} = \frac{2 L^{\alpha}\big(  g^2 -  \frac{ \alpha (1-\alpha)\bar{q}_+^4(\eta)}{ 2L^2}\big)}{ \bar{q}_+^3(\eta)A_+^2(\eta)} <0.
\end{align}
Thus
\begin{align}
L\vartheta\leq -\frac12 a b \alpha (1-\alpha)a_{11}(\eta)L^{\alpha-2}.
\end{align}
Let $a = C \|\widehat{F}\|_{L^\infty(\Omega_+^{\mathcal{K}})}$ for a large positive constant $C$, then one has
\begin{align}
  L\vartheta\leq -\frac12 a b \alpha (1-\alpha)a_{22}(\eta)\overline{m}^{\alpha -2} <  - \|\widehat{F}\|_{L^\infty(\Omega_+^{\mathcal{K}})}.
\end{align}
Therefore
 \begin{align}
   &L(\vartheta \pm \widehat{\Psi}_+) <0, \quad \text{in}\quad \Omega_+^{\mathcal{K}},\\
   &\vartheta \pm \widehat{\Psi}_+ > 0, \quad \text{on}\quad \partial\Omega_+^{\mathcal{K}}.
 \end{align}
 Applying the maximum principle, it follows that
 \begin{align}\label{interes}
   \|\widehat{\Psi}\|_{L^{\infty}(\Omega_+^{\mathcal{K}})} \leq C \vartheta \leq C\|\widehat{F}\|_{L^{\infty}(\Omega_+^{\mathcal{K}})},
 \end{align}
where the positive constant $C$ depends on $\overline{U}_+$.

Next, we consider the $L^\infty$ estimate for $\widehat{\Psi}$ near the corners. Without loss of generality, we only consider the estimate near the point $Q_4\defs (\mathcal{K},\overline{m})$, since the other corner points can be treated similarly. Let $B_{r_0}(Q_4)$ be the ball with the radius $r_0$ and the center $Q_4$, where $r_0$ is a small positive constant which will be determined to later.
Let
\begin{align}
  \xi - \mathcal{K} = r\cos \varpi, \quad \eta-\overline{m}  = r\sin \varpi,
\end{align}
where $ \varpi = \arctan\frac{\eta - \overline{m}}{\xi - \mathcal{K}} + 2\pi$ with $\varpi \in [\frac{3\pi}{2}, 2\pi]$ and $r\in(0, r_0)$.
Let the constant $\alpha \in (0,1)$ and
\begin{align}
   \upsilon (r,\varpi) = h r^{1+\alpha}\sin \big(\frac{3+\alpha}{2} (\varpi-\frac{3\pi}{2}) + \frac{1-\alpha}{8}\pi \big),
\end{align}
where $h$ is a large positive constant, which will be determined to later.
Direct calculations yield that
\begin{align}
\upsilon (r,\frac{3\pi}{2}) =& h r^{1+\alpha}\sin \big( \frac{1-\alpha}{8}\pi\big) >0,\\
 \upsilon (r,2\pi) =& h r^{1+\alpha}\sin \big( \frac{7+\alpha}{8}\pi\big) >0,\\
  \upsilon (r_0,\varpi) =& h r_0^{1+\alpha}\sin \big( \frac{3+\alpha}{2} (\varpi-\frac{3\pi}{2}) + \frac{1-\alpha}{8}\pi\big) \geq   h r_0^{1+\alpha} \sin\big( \frac{15+\alpha}{16}\pi \big).
\end{align}
Let
\begin{align}
   L_{\overline{m}} \upsilon \defs a_{11}(\overline{m}) \partial_{\xi}^2 \upsilon + a_{22}(\overline{m})\partial_{\eta}^2 \upsilon,
\end{align}
where
\begin{align}
    a_{11}(\overline{m}) = \frac{\bar{q}_+(\overline{m})}{ A_+^2(\overline{m}) },\quad a_{22}(\overline{m}) = \frac{1}{A_+^2(\overline{m})} \frac{\bar{\rho}_+^2(\overline{m}) \bar{q}_+^3(\overline{m})}{1-\overline{M}_+^2(\overline{m})}.
\end{align}
One may assume that $$ a_{11}(\overline{m}) = a_{22}(\overline{m})=1.$$ Otherwise, $L_{\overline{m}}$ can be changed into the Laplace operator by the transformation $(\tilde{\xi} , \tilde{\eta} ) = \big(\frac{\xi}{\sqrt{  a_{11}(\overline{m})}},  \frac{\eta}{\sqrt{ a_{22}(\overline{m})}}\big)$. Further calculations yield that
\begin{align}
   L\upsilon =& L_{\overline{m}}\upsilon +  \big( L \upsilon - L_{\overline{m}}\upsilon   \big)\notag\\
   \leq &  \big( \partial_{\xi}^2 \upsilon + \partial_{\eta}^2 \upsilon  \big) + Ch r^{\alpha -1}\big( | a_{11}(\eta) - a_{11}(\overline{m})| + | a_{22}(\eta) - a_{22}(\overline{m})|\big)\notag\\
    &+ C h \big( |\partial_{\eta}a_{22}(\eta)| r^{\alpha}  +   a_0(\eta) r^{1+\alpha}\big)\notag\\
    \leq & - h r^{\alpha -1}\frac{(1-\alpha)(5+3\alpha)}{4} \sin\big( \frac{15+\alpha}{16}\pi \big) \notag\\
    & + Ch r^{\alpha -1} r_0^{\alpha}+ C h\big(r^{\alpha}  + r^{\alpha +1}\big)\notag\\
    =& - h r^{\alpha -1} \big( \frac{(1-\alpha)(5+3\alpha)}{4} \sin\big( \frac{15+\alpha}{16}\pi \big) - Cr_0^{\alpha} - C(r_0+r_0^2)  \big)\notag\\
    \leq &  - \|\widehat{F}\|_{L^\infty(\Omega_+^{\mathcal{K}})},
\end{align}
where the positive constant $C$ depends on $\overline{U}_+$, and the small constant $r_0>0$ depends on $\alpha$ and $C$.
By applying \eqref{interes},
we can choose $h = C r_0^{-1-\alpha} \|\widehat{F}\|_{L^\infty(\Omega_+^{\mathcal{K}})}$, where $C$ is a large positive constant such that
\begin{align}
  &L(\upsilon \pm \widehat{\Psi}) <0, \quad \text{in}\quad B_{r_0}(Q_4)\cap \Omega_+^{\mathcal{K}},\\
   &\upsilon \pm \widehat{\Psi} > 0, \quad \text{on}\quad \partial(B_{r_0}(Q_4)\cap \Omega_+^{\mathcal{K}}) \cap \partial\Omega_+^{\mathcal{K}},\\
   & \upsilon \pm \widehat{\Psi} > 0, \quad \text{on}\quad \partial(B_{r_0}(Q_4)\cap \Omega_+^{\mathcal{K}}) \cap \{ r = r_0\}.
\end{align}
It follows from the maximum principle that
\begin{align}\label{PsiLinftycorner}
  \|\widehat{\Psi}\|_{L^\infty(B_{r_0}(Q_4)\cap \Omega_+^{\mathcal{K}})} \leq C r^{1+\alpha}\|\widehat{F}\|_{L^\infty(\Omega_+^{\mathcal{K}})}.
\end{align}
Employing \eqref{interes}, \eqref{PsiLinftycorner}, Schauder interior and boundary estimates in \cite[Chapter 8]{GT}, and the definition of $\widehat{F}$ in \eqref{reeq2hat}, one can obtain global $C^{1,\alpha}$ estimate for $\widehat{\Psi}$, \emph{i.e.}
\begin{align}\label{widehatPsi1alpha}
\|\widehat{\Psi}\|_{C^{1,\alpha}(\overline{\Omega_+^{\mathcal{K}}})} \leq C \|\widehat{F}\|_{0,\alpha;\Omega_+^{\mathcal{K}}}^{(1-\alpha;\{\overline{\Gamma_0^{\mathcal{K}}}\cup \overline{\Gamma_{\overline{m}}^{\mathcal{K}}}\})}.
\end{align}

Now we raise the regularity of $\widehat{\Psi}$ up to the weighted $C^{2, \alpha}$ norm. We only consider the solution $\widehat{\Psi}$ near point $Q_4\defs (\mathcal{K},\overline{m})$, since the other corner points $Q_i,(i=1,2,3)$ can be treated similarly.
For $0< R < \frac{L}{2}$, let
\begin{align}
  B_{R}^+ \defs B_R(Q_4) \cap \overline{\Omega_+^{\mathcal{K}}},\quad   \Gamma_{\overline{m}}^{(R)}\defs B_{R}^+ \cap \Gamma_{\overline{m}}^{\mathcal{K}},\quad
   \Gamma_s^{(R)}\defs B_{R}^+ \cap \Gamma_s^{\mathcal{K}}.
\end{align}
For any point $Q= (\xi,\eta)\in  B_{\frac{R}{2}}^+$, let $ d_{Q}\defs \dist (Q, Q_4)$, there will be at least one of the following three cases:
\begin{align}
  \text{(i)}\quad&  B_{\frac{d_Q}{10\mathcal{L}}}(Q) \subset B_R^+,\\
  \text{(ii)}\quad& Q \in  B_{\frac{d_{\widehat{Q}}}{2\mathcal{L}}}(\widehat{Q}), \quad \text{for}\quad \widehat{Q} \in \Gamma_{\overline{m}}^{(R)},\\
  \text{(iii)} \quad& Q \in  B_{\frac{d_{\widehat{Q}}}{2\mathcal{L}}}(\widehat{Q}), \quad \text{for }\quad \widehat{Q} \in \Gamma_s^{(R)},
\end{align}
for some constant $\mathcal{L}>1$.
Thus, it suffices to make the $C^{2,\alpha}$ estimates of $\widehat{\Psi}$ in the following domains:
\begin{align}
  \text{(I)}\quad&  B_{\frac{d_{Q}}{20\mathcal{L}}}(Q)\quad \text{when}\quad  B_{\frac{d_{Q}}{10\mathcal{L}}}(Q) \subset B_R^+,\\
  \text{(II)}\quad& B_{\frac{d_{\widehat{Q}}}{2\mathcal{L}}}(\widehat{Q}) \cap B_R^+\quad  \text{for}\quad \widehat{Q}\in  \Gamma_{\overline{m}}^{(\frac{R}{2})},\\
  \text{(III)}\quad& B_{\frac{d_{\widehat{Q}}}{2\mathcal{L}}}(\widehat{Q}) \cap B_R^+\quad  \text{for}\quad \widehat{Q}\in  \Gamma_s^{(\frac{R}{2})}.
\end{align}
We only discuss case $\text{(III)}$, and the other cases can be treated similarly. Let $\widehat{d}\defs \frac{d_{\widehat{Q}}}{2\mathcal{L}}$ and $\widehat{Q} = (\hat{\xi}, \hat{\eta})$. Let
\vspace{-0.3cm}
\begin{align}
 \mathbf{Z}\defs (Z_1, Z_2) \defs \frac{(\xi - \hat{\xi}, \eta - \hat{\eta})}{\widehat{d}},
\end{align}
then the domain $B_{\frac{d_{\widehat{Q}}}{2\mathcal{L}}}(\widehat{Q}) =  B_{\widehat{d}}(\widehat{Q})$ becomes $B_1(0,0)$. In addition, the shock front $\xi =\mathcal{K}$ becomes $Z_1 = \frac{\mathcal{K} - \hat{\xi}}{\widehat{d}}$. Then we consider the problem in the domain
\begin{align}
 \Omega_1^{\widehat{Q}}\defs B_1(0,0) \cap \big\{Z_1 > \frac{\mathcal{K} - \hat{\xi}}{\widehat{d}} \big\}.
\end{align}
Define
\begin{align}
  \Psi^{\aleph}(Z_1, Z_2) \defs \frac{\widehat{\Psi}(\hat{\xi} +\widehat{d} Z_1,\hat{\eta} +\widehat{d} Z_2)}{\widehat{d}^{1+\alpha}}, \quad F^{\aleph}(Z_1, Z_2)\defs \frac{\widehat{F}(\hat{\xi} +\widehat{d} Z_1,\hat{\eta} +\widehat{d} Z_2)}{\widehat{d}^{\alpha-1}}.
\end{align}
By \eqref{reeq2hat}, one has
\begin{align}
&a_{11}(\eta)  \Psi_{Z_1Z_1}^{\aleph} + a_{22}(\eta) \Psi_{Z_2Z_2}^{\aleph}+ (\partial_{\eta} a_{22}(\eta)) \widehat{d} \Psi_{Z_2}^{\aleph} + a_0(\eta)
\widehat{d}^2 \Psi^{\aleph}  = F^\aleph ,\,\text{in}\quad \Omega_1^{\widehat{Q}},\\
&\Psi^{\aleph} =0, \quad \text{on}\quad \partial  \Omega_1^{\widehat{Q}} \cap \big\{Z_1 = \frac{\mathcal{K} - \hat{\xi}}{\widehat{d}} \big\}.
\end{align}
Then by standard local estimates for linear elliptic boundary problem and \eqref{widehatPsi1alpha}, one has
\begin{align}
  \|\Psi^{\aleph}\|_{C^{2,\alpha}(\Omega_{\frac12}^{\widehat{Q}})} \leq C \big(  \|\Psi^{\aleph}\|_{C^0(\Omega_1^{\widehat{Q}})} + \|F^\aleph\|_{C^{0,\alpha}(\Omega_1^{\widehat{Q}}) }  \big)\leq  C \|\widehat{F}\|_{0,\alpha;\Omega_1^{\widehat{Q}} }^{(1-\alpha; Q_4)}.
\end{align}
Furthermore, one has
\begin{align}\label{weightes}
\|\widehat{\Psi}\|_{2,\alpha;B_{\frac{R}{2}}^+}^{(-1-\alpha;Q_4)} \leq
 C \|\widehat{F}\|_{0,\alpha;B_R^+}^{(1-\alpha; Q_4)}.
\end{align}
Then we modify the domain $B_R^+(Q_4)$ by smoothing out the corner at the point $Q_4$ using a mollification process. We denote the resulting domain as $D^+$. More precisely, $D^+$ represents an open domain that satisfies
\begin{align}
    D^+ \subset B_R^+(Q_4),\quad D^+ \backslash
   B_{\frac{R}{10}}^+(Q_4) = B_R^+(Q_4)\backslash
   B_{\frac{R}{10}}^+(Q_4),
\end{align}
and
\vspace{-0.3cm}
\begin{align}
   \partial D^+ \cap B_{\frac{R}{5}} (Q_4),\quad \text{is a }C^{2,\alpha}-\text{curve}.
\end{align}
In the smooth domain $D^+$, one can use the Schauder interior and boundary estimates in Chapter 8 of \cite{GT}, and obtain
\begin{align}
    \|\widehat{\Psi}\|_{C^{2,\alpha}(\overline{D^+})} \leq C \|\widehat{F}\|_{C^{0,\alpha}(\Omega_+^{\mathcal{K}})}.
\end{align}
Furthermore, one can obtain
\begin{align}\label{hatPsiWN}
\|\widehat{\Psi}\|_{2,\alpha; \Omega_+^{\mathcal{K}}}^{(-1-\alpha;\{Q_i\})} \leq C \|\widehat{F}\|_{0,\alpha;\Omega_+^{\mathcal{K}}}^{(1-\alpha;\{\overline{\Gamma_0^{\mathcal{K}}}\cup \overline{\Gamma_{\overline{m}}^{\mathcal{K}}}\})}.
\end{align}
By \eqref{hatPsiWN}, \eqref{widehatdef} and \eqref{NPsi}, one has \eqref{pthetaes}.
Thus, there exists a unique solution $(\widehat{\delta p_+},\, \widehat{\delta \theta_+})\in C_{1,\alpha}^{(-\alpha;\{Q_i\})}(\Omega_+^{\mathcal{K}})^2 $ to the problem \eqref{NLdoteq1}-\eqref{Neq:theta4+re} if and only if \eqref{solvabilityhat} holds.
 In addition,
  \begin{align}\label{pthetaes}
&\|\widehat{\delta p_+}\|_{1,\alpha;\Omega_+^{\mathcal{K}}}^{(-\alpha;\{Q_i\})}  +  \|\widehat{\delta \theta_+}\|_{1,\alpha;\Omega_+^{\mathcal{K}}}^{(-\alpha;\{Q_i\})}  \notag\\
\leq & C\Big(\sum_{i=1}^2 \|F_i \|_{0,\alpha;\Omega_+^{\mathcal{K}}}^{(1-\alpha;\{\overline{\Gamma_0^{\mathcal{K}}}\cup \overline{\Gamma_{\overline{m}}^{\mathcal{K}}}\})}  + \sigma \|p_{ex}\|_{1,\alpha;\Gamma_{ex}}^{(-\alpha;\{Q_2, Q_3\})} + \sigma \|\Theta\|_{C^{2,\alpha}(\Gamma_{\overline{m}}^{\mathcal{K}})} + \|h_1\|_{1,\alpha;\Gamma_s^{\mathcal{K}}}^{(-\alpha;\{Q_1, Q_4\})} \Big),
\end{align}
where the constant $C$ depends on $\overline{U}_{\pm}$, $L$ and $\alpha$.

Therefore, \eqref{S+=eq}-\eqref{q+=eq}, \eqref{pthetasharp} and \eqref{pthetaes} yield that \eqref{pthetaqses}.
\end{proof}

\section{Nonlinear free boundary problem}
In order to solve the nonlinear free boundary problem in the sense of Definition \ref{RSP}, we will design an iteration scheme such that the assumptions \eqref{Assumpf}-\eqref{assumebry} hold. Then, the Banach fixed point theorem will applied to establish the well-posedness of this nonlinear free boundary  problem.

\subsection{Linearized problem and approximate shock position}
The first step is to introduce a linearized problem in
$\Omega_+^{\mathcal{K}_0}$ by defining
 \begin{align}
 &\delta {U}_+(\xi,\eta)\defs\delta U_+^{(0)}(\xi,\eta) = \big( \delta p_+^{(0)}, \, \delta\theta_+^{(0)},\, \delta q_+^{(0)},\, \delta S_+^{(0)}\big)^{\top}(\xi,\eta),\label{letUpsi}\\
 &\psi(\eta) \equiv \mathcal{K}\defs \mathcal{K}_0,\,\, F_1 \defs 0, \,\, F_3 \defs 0,\label{letUpsix}\\
 &F_2\defs \frac{g}{\bar{q}_+(\eta)A_+(\eta)}\big(\frac{m-\overline{m}}{\overline{m}}+ \frac{\delta p_+^{(0)}(\mathcal{K}_0, \eta) + \bar{\rho}_+(\eta)\bar{q}_+(\eta) \delta q_+^{(0)} (\mathcal{K}_0, \eta)}{\bar{\rho}_+(\eta) \bar{q}_+^2(\eta)} \big),\\
&\delta p_+^{(0)} (L, \eta) \defs \sigma p_{ex} (\int_{0}^{\eta}\frac{1}{\bar{\rho}_+(t) \bar{q}_+(t)} \dif t),\,\,  \mathcal{H}_j^{\sharp}\defs -{\mathbf{\alpha}}_{j-} \cdot {U}_-^{(0)},(j = 1,2,3,4),\label{letGj=0}
\end{align}
where
\begin{align}
  &{\mathbf{\alpha}}_1^- = -\frac{[\bar{p}]}{\bar{\rho}_- \bar{q}_-}\big(-\frac{1}{\bar{\rho}_- \bar{c}_-^2},\, 0,\, -\frac{1}{\bar{q}_-},\, \frac{1}{\gamma c_v}\big)^{\top},\label{alpha1-}\\
&{\mathbf{\alpha}}_2^- = -\frac{[\bar{p}]}{\bar{\rho}_- \bar{q}_-} \big(1 -\frac{\bar{p}_-}{\bar{\rho}_- \bar{c}_-^2},\, 0,\, \bar{\rho}_- \bar{q}_- -\frac{\bar{p}_-}{\bar{q}_-},\,\frac{\bar{p}_-}{\gamma c_v}\big)^{\top},\\
&{\mathbf{\alpha}}_3^- = -\big(\frac{1}{\bar{\rho}_-},\,0,\, \bar{q}_-, \, \frac{1}{(\gamma -1)c_v} \frac{\bar{p}_-}{\bar{\rho}_-}\big)^{\top},\,\, {\mathbf{\alpha}}_4^- = -(\bar{q}_-,\, 0,\,0 ,\, 0)^{\top}.\label{alpha4-}
\end{align}
In \eqref{letGj=0}, $U_-^{(0)}=\bar{U}_-+\delta U_-^{(0)}$ with $\delta U_-^{(0)}=\big( \delta p_-^{(0)}, \, \delta\theta_-^{(0)},\, \delta q_-^{(0)},\, \delta S_-^{(0)}\big)^{\top}$ satisfying the following equations in $\Omega_-^{\mathcal{K}_0}$:
\begin{align}
&\partial_{\xi} \big({\overline{M}_-^2(\eta)}\delta p_-^{(0)} +\bar{\rho}_-(\eta) \bar{q}_-(\eta)\delta q_-^{(0)}\big) + \bar{\rho}_-^2(\eta) \bar{q}_-^3(\eta)\partial_{\eta}\delta \theta_-^{(0)} =0,\label{Ldoteq1-}\\
 & \partial_{\xi} \delta\theta_-^{(0)} + \frac{1}{\bar{q}_-(\eta)}\partial_{\eta}\delta p_-^{(0)} - \frac{g}{\bar{q}_-^3(\eta)} \delta q_-^{(0)} = \frac{m-\overline{m}}{\overline{m}}\frac{g}{\bar{q}_-^2(\eta)},\label{Ldoteq2-}\\
 &\partial_{\xi} \big(\bar{\rho}_-(\eta)\bar{q}_-(\eta)\delta q_-^{(0)} + \delta p_-^{(0)}\big) + g\bar{\rho}_-(\eta)\delta \theta_-^{(0)} =0,\label{Ldoteq3-}\\
 &\partial_{\xi} \delta S_-^{(0)} =0, \label{Ldoteq4-}
\end{align}
and the following initial-boundary value conditions:
\begin{align}\label{dotU-0}
\delta U_-^{(0)}(0,\eta)=(\sigma p_{en}(\eta), 0, 0,0)^\top,\,\,
\delta \theta_-^{(0)}(\xi,0) = 0, \,\,
\delta \theta_-^{(0)}(\xi, \overline{m}) = \sigma\Theta(\xi).
\end{align}
From the standard theorems for two-dimensional hyperbolic equations (see \cite{LY1985}), it is easy to check that the solutions of problem \eqref{Ldoteq1-}-\eqref{dotU-0} satisfy
\begin{align}
&\frac{1}{\bar{\rho}_-(\eta)\bar{q}_-^2(\eta)} \big(\overline{M}_-^2(\eta) \delta p_-^{(0)} + {\bar{\rho}_-(\eta)\bar{q}_-(\eta)} \delta q_-^{(0)}\big)\notag\\
= & \frac{\overline{M}_-^2(\eta)}{\bar{\rho}_-(\eta)\bar{q}_-^2(\eta)} \sigma p_{en} (\eta)
  -{\bar{\rho}_-(\eta)\bar{q}_-(\eta)}\int_0^{\xi} \partial_\eta \delta \theta_-^{(0)}(s,\eta) \dif s,\label{dotrh--}\\
&(\delta p_-^{(0)} +  \bar{\rho}_-(\eta) \bar{q}_-(\eta) \delta q_-^{(0)}) (\xi,\eta) =  \sigma p_{en}(\eta) -  g\bar{\rho}_- (\eta) \int_0^\xi \delta \theta_-^{(0)} (s,\eta)\dif s,\label{dotrh-}\\
&\|\delta U_-^{(0)}\|_{C^{2,\alpha}( \overline{\Omega_-^{\mathcal{K}_0}})} \leq C\sigma\big(\|\Theta\|_{C^{2,\alpha}(\Gamma_{\overline{m}}\cap \overline{\Omega_-^{\mathcal{K}_0}} )} + \|p_{en}\|_{C^{2,\alpha}(\Gamma_{en})} \big) \leq C_{\overline{U}_-} \sigma,\label{dotU-es}\\
&\| U- \overline{U}_- - \delta U_-^{(0)}\|_{C^{1,\alpha}( \overline{\Omega_-^{\mathcal{K}_0}})} \leq C_- \sigma^2,\label{itera-}
\end{align}
where the positive constants $C_{\overline{U}_-} $ and $C_-$ depend on $\overline{U}_-$, $L$ and $\alpha$.
Employing \eqref{As-1H}-\eqref{h4==}, \eqref{alpha1-}-\eqref{alpha4-} and \eqref{dotrh--}-\eqref{dotrh-}, further tedious calculations yield that on the shock $\xi=L$,
\begin{align}\label{As-1dot}
\big(\delta p_+^{(0)},\,\delta q_+^{(0)},\, \delta S_+^{(0)},\, (\psi^{(0)})'\big)\defs \big(\frac{\bar{\rho}_+\bar{q}_+^2 }{1 - \overline{M}_+^2}h_1^{(0)},\, \frac{1}{\bar{\rho}_+ \bar{q}_+} h_2^{(0)},\,h_3^{(0)},\, h_4^{(0)} \big)(\eta),
\end{align}
where
\begin{align}
h_1^{(0)} =&\Big(\frac{1}{\bar{\rho}_+\bar{q}_+^2 } -  \frac{\gamma-1}{\gamma\bar{p}_+} (\frac{\bar{\rho}_+}{\bar{\rho}_-} -1 )- \frac{\overline{M}_-^2}{\bar{\rho}_-\bar{q}_-^2}\big( 1- ( \frac{1}{\bar{\rho}_+\bar{q}_+^2 }+\frac{\gamma-1}{\gamma \bar{p}_+})[\bar{p}] \big)   \Big)\sigma p_{en}(\eta)\notag\\
  & -\big(\frac{1}{\bar{q}_-\bar{q}_+ } -  \frac{\gamma-1}{\gamma\bar{p}_+} (\bar{\rho}_+ - \bar{\rho}_- )\big)g\int_0^{\mathcal{K}_0} \delta\theta_-^{(0)} (\xi,\eta)\dif \tau\notag\\
  & + \big(1- ( \frac{1}{\bar{\rho}_+\bar{q}_+^2 }+\frac{\gamma-1}{\gamma \bar{p}_+})[\bar{p}]
\big){\bar{\rho}_-\bar{q}_-}\int_0^{\mathcal{K}_0} \partial_\eta \delta\theta_-^{(0)} (\xi,\eta) \dif \xi,\label{LLG10}\\
h_2^{(0)} =&\big( 1 - \frac{\bar{\rho}_+\bar{q}_+^2}{1 - \overline{M}_+^2} ( \frac{1}{\bar{\rho}_+\bar{q}_+^2}- \frac{\gamma-1}{\gamma\bar{p}_+}(\frac{\bar{\rho}_+}{\bar{\rho}_-}-1)  ) \big) \big( \sigma p_{en}(\eta) -  g\bar{\rho}_- \int_0^{\mathcal{K}_0} \delta\theta_-^{(0)} (\xi,\eta)\dif \xi  \big)\notag\\
 & + \big( [\bar{p}]+ \frac{\bar{\rho}_+\bar{q}_+^2}{1 - \overline{M}_+^2} ( 1 - (\frac{1}{\bar{\rho}_+\bar{q}_+^2}+\frac{\gamma-1}{\gamma \bar{p}_+} ) [\bar{p}] ) \big) \notag\\
 &\quad \times \big( \frac{\overline{M}_-^2}{\bar{\rho}_-\bar{q}_-^2} \sigma p_{en} (\eta)
  -{\bar{\rho}_-\bar{q}_-}\int_0^{\mathcal{K}_0} \partial_\eta \delta\theta_-^{(0)}(\xi,\eta) \dif \xi \big),\\
h_3^{(0)} =&   \frac{(\gamma-1)c_v}{\bar{p}_+}\big(\frac{\bar{\rho}_+}{\bar{\rho}_-} -1 -\frac{\overline{M}_-^2}{\bar{\rho}_- \bar{q}_-^2}[\bar{p}]\big)\sigma p_{en}(\eta)- \frac{(\gamma-1)c_v}{\bar{p}_+}\notag\\
& \quad \times \big((\bar{\rho}_+ - \bar{\rho}_-) g \int_0^{\mathcal{K}_0} \delta\theta_-^{(0)} (\xi,\eta)\dif \xi - \bar{\rho}_-\bar{q}_-[\bar{p}]\int_{0}^{\mathcal{K}_0} \partial_{\eta}\delta \theta_-^{(0)}(\xi,\eta)\dif \xi \big), \label{LLGRHs}\\
h_4^{(0)} =& \frac{\bar{q}_+ \delta\theta_+^{(0)}  - \bar{q}_- \delta \theta_-^{(0)}}{[\bar{p}]} \frac{\overline{m}}{m}.\label{dotH4=}
 \end{align}

Based on Theorem \ref{Rthm}, we have the following lemma.
\begin{lem}\label{lem(0)=}
Assume that \eqref{assumpthm}  holds. 
There exists a unique solution $(\delta p_+^{(0)}, \, \delta\theta_+^{(0)},\, \delta q_+^{(0)},\, \delta S_+^{(0)}; \mathcal{K}_0)$ for the problem in the sense of Definition \ref{RSP} with \eqref{letUpsix}-\eqref{letGj=0},
if and only if
\begin{align}\label{dotsolvability}
  &\int_0^{\overline{m}}\frac{A_+(\eta)}{\bar{\rho}_+(\eta) \bar{q}_+(\eta)} h_1^{(0)} (\mathcal{K}_0, \eta)\dif \eta +  A_+(\eta) \int_{\mathcal{K}_0}^L \sigma \Theta(\xi)\dif \xi \notag\\
  =& \int_0^{\overline{m}} \frac{1-\overline{M}_+^2(\eta)}{\bar{\rho}_+^2(\eta) \bar{q}_+^3(\eta)} A_+(\eta) \sigma p_{ex}(\int_{0}^{\eta}\frac{1}{\bar{\rho}_+(t) \bar{q}_+(t)} \dif t)\dif \eta.
\end{align}
In addition, the solution, if it exists, satisfies
  \begin{align}\label{pthetaqses}
&\|(\delta p_+^{(0)}, \, \delta\theta_+^{(0)}) \|_{1,\alpha;\Omega_+^{\mathcal{K}_0}}^{(-\alpha;\{Q_i\})} +  \|(\delta q_+^{(0)},\, \delta S_+^{(0)})\|_{1,\alpha;\Omega_+^{\mathcal{K}_0}}^{(-\alpha;\{\overline{\Gamma_0^{\mathcal{K}}}\cup \overline{\Gamma_{\overline{m}}^{\mathcal{K}}}\})}\notag\\
\leq & C\sigma \big(\|p_{en}\|_{C^{2,\alpha}(\Gamma_{en})} +  \|p_{ex}\|_{C^{2,\alpha}(\Gamma_{ex})} + \|\Theta\|_{C^{2,\alpha}([0,L])}\big),
\end{align}
where the constant $C$ depends on $\overline{U}_{\pm}$, $L$ and $\alpha$.
\end{lem}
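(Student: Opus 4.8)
The plan is to deduce the lemma directly from Theorem \ref{Rthm} applied to the linearized configuration \eqref{letUpsi}--\eqref{letGj=0}, after specializing the solvability condition and the estimate of that theorem. First I would record the simplifications forced by $\psi(\eta)\equiv\mathcal{K}_0$: one has $\psi'\equiv0$ and $\psi(\overline{m})-\mathcal{K}_0=0$, so $F_1=F_3=0$, and by the R.--H. relations \eqref{As-1H} the traces on the fixed shock front are prescribed, $\delta p_+^{(0)}(\mathcal{K}_0,\eta)=\frac{\bar{\rho}_+\bar{q}_+^2}{1-\overline{M}_+^2}h_1^{(0)}(\eta)$ and $\delta q_+^{(0)}(\mathcal{K}_0,\eta)=\frac{1}{\bar{\rho}_+\bar{q}_+}h_2^{(0)}(\eta)$, where $h_1^{(0)},\dots,h_4^{(0)}$ are the explicit functions \eqref{LLG10}--\eqref{dotH4=} built from the supersonic corrector $\delta U_-^{(0)}$ through \eqref{dotrh--}--\eqref{dotrh-}. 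Hence $F_2$ in \eqref{letGj=0} and the $h_j^{(0)}$ are known functions of $\overline{U}_\pm$, $\mathcal{K}_0$, $p_{en}$, $p_{ex}$, $\Theta$ and $\sigma$ alone, so the problem of Definition \ref{RSP} is a linear boundary value problem with explicit data, and the fifth component of the solution tuple is the prescribed number $\mathcal{K}_0$ itself — the assertion of the lemma being that this $\mathcal{K}_0$ is admissible precisely when \eqref{dotsolvability} holds.

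Next I would check the hypotheses \eqref{Assumpf}--\eqref{assumebry} of Theorem \ref{Rthm}. By \eqref{dotU-es}, $\delta U_-^{(0)}\in C^{2,\alpha}(\overline{\Omega_-^{\mathcal{K}_0}})$ with $\|\delta U_-^{(0)}\|_{C^{2,\alpha}}\le C\sigma(\|p_{en}\|_{C^{2,\alpha}}+\|\Theta\|_{C^{2,\alpha}})$, and by \eqref{assumexy}, $p_{en}\in C^{2,\alpha}(\Gamma_{en})$, $p_{ex}\in C^{2,\alpha}(\Gamma_{ex})$, $\Theta\in C^{2,\alpha}([0,L])$. Inserting these into \eqref{LLG10}--\eqref{dotH4=} and into the definition of $F_2$ in \eqref{letGj=0} shows that $F_1=F_3=0$, $F_2\in C^{0,\alpha}(\overline{\Omega_+^{\mathcal{K}_0}})$, each $h_j^{(0)}\in C^{1,\alpha}(\overline{\Gamma_s^{\mathcal{K}_0}})$, and $p_{ex}\big(\int_0^\eta\frac{\dif t}{\bar{\rho}_+(t)\bar{q}_+(t)}\big)\in C^{2,\alpha}(\overline{\Gamma_{ex}})$; these regular functions lie a fortiori in the weighted H\"{o}lder spaces of \eqref{Assumpf}--\eqref{assumebry}, and \eqref{assumpthm} is assumed, so Theorem \ref{Rthm} applies and yields a unique $(\delta p_+^{(0)},\delta\theta_+^{(0)},\delta q_+^{(0)},\delta S_+^{(0)})$ — with $\delta S_+^{(0)}$ and $\delta q_+^{(0)}$ recovered through \eqref{S+=eq}--\eqref{q+=eq} — solving \eqref{Rrefeq1}--\eqref{Rrefeq4}, \eqref{expre}--\eqref{eq:theta4+re} and \eqref{As-1H} if and only if \eqref{solvabilityhat} holds, together with the corresponding estimate.

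It then remains to specialize. In \eqref{solvabilityhat} the choice $\psi\equiv\mathcal{K}_0$ reduces the argument of $\Theta$ to $\xi$, so that term becomes $\int_{\mathcal{K}_0}^L\sigma\Theta(\xi)\dif\xi$; the double integral of $F_1$ disappears since $F_1=0$; the linearized exit datum in \eqref{letGj=0} replaces $\frac{m}{\overline{m}}\frac{1}{\rho_+q_+\cos\theta_+(L,\cdot)}$ by $\frac{1}{\bar{\rho}_+\bar{q}_+}$, so $p_{ex}$ is evaluated at $\int_0^\eta\frac{\dif t}{\bar{\rho}_+(t)\bar{q}_+(t)}$; and $h_1$ is $h_1^{(0)}(\mathcal{K}_0,\cdot)$. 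This is precisely \eqref{dotsolvability}, which gives the ``if and only if''. For the estimate, \eqref{dotU-es} gives $\|h_j^{(0)}\|_{1,\alpha;\Gamma_s^{\mathcal{K}_0}}^{(-\alpha;\{Q_1,Q_4\})}\le C\sigma(\|p_{en}\|_{C^{2,\alpha}}+\|\Theta\|_{C^{2,\alpha}})$; since $m-\overline{m}=O(\sigma)$ by the definitions of $m$ and $\overline{m}$ following \eqref{Lagrangexieta=}, the same bound holds for $\|F_2\|_{0,\alpha;\Omega_+^{\mathcal{K}_0}}^{(1-\alpha;\{\overline{\Gamma_0^{\mathcal{K}_0}}\cup\overline{\Gamma_{\overline{m}}^{\mathcal{K}_0}}\})}$, and $\|p_{ex}(\cdot)\|\le C\|p_{ex}\|_{C^{2,\alpha}(\Gamma_{ex})}$; substituting these and $F_1=F_3=0$ into the estimate of Theorem \ref{Rthm} yields \eqref{pthetaqses}. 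I expect the main obstacle to be nothing more than this bookkeeping — verifying that $F_2$ and the $h_j^{(0)}$ inherit both the $C^{1,\alpha}$-regularity and the $O(\sigma)$ smallness from $\delta U_-^{(0)}$ and the boundary data, and confirming that the reduced form of \eqref{solvabilityhat} coincides with \eqref{dotsolvability}; the genuine analytic work (the auxiliary problem, the weighted Schauder estimates near the corners, and the solvability mechanism) has already been carried out in Theorem \ref{Rthm}.
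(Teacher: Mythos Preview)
Your proposal is correct and follows essentially the same approach as the paper: the paper's treatment of this lemma is the remark immediately following it, which states that by Theorem~\ref{Rthm} and \eqref{dotU-es} the lemma reduces to specializing the solvability condition \eqref{solvabilityhat} to the linearized data \eqref{letUpsi}--\eqref{letGj=0}. Your bookkeeping---tracking that $F_1=F_3=0$, that $\psi\equiv\mathcal{K}_0$ collapses the $\Theta$-argument to $\xi$, that the exit datum becomes $p_{ex}\big(\int_0^\eta\frac{\dif t}{\bar\rho_+\bar q_+}\big)$, and that the $h_j^{(0)}$ inherit the $O(\sigma)$ bound from \eqref{dotU-es}---is exactly what is needed and matches the paper.
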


Next, we will find a unique $\mathcal{K}_0$ such that \eqref{dotsolvability} holds.

Substituting $h_1^{(0)}$ given in \eqref{LLG10} into \eqref{dotsolvability}, one has
\begin{align}\label{dotsolvability1}
& - \int_0^{\overline{m}}\int_0^{\mathcal{K}_0} K_1(\eta) \delta\theta_-^{(0)} (\xi,\eta)\dif \xi
\dif \eta + \int_0^{\overline{m}}\int_0^{\mathcal{K}_0}K_2(\eta) \partial_\eta \delta\theta_-^{(0)} (\xi,\eta) \dif \xi
\dif \eta \notag\\
   &+  A_+(\eta) \int_{\mathcal{K}_0}^L \sigma \Theta(\xi)\dif \xi \notag\\
  =& \sigma \int_0^{\overline{m}} K_3(\eta) p_{ex}(\int_{0}^{\eta}\frac{1}{\bar{\rho}_+(t) \bar{q}_+(t)} \dif t)\dif \eta -\sigma \int_0^{\overline{m}}
  K_4(\eta) p_{en}(\eta)\dif \eta,
\end{align}
where
\begin{align}
 K_1(\eta)\defs& \frac{gA_+(\eta) }{\bar{\rho}_+(\eta)\bar{q}_+(\eta)}\big(\frac{1}{\bar{q}_-(\eta)\bar{q}_+(\eta)} -  \frac{\gamma-1}{\gamma\bar{p}_+(\eta)} (\bar{\rho}_+(\eta) - \bar{\rho}_-(\eta) )\big),\label{K1eta=}\\
  K_2(\eta)\defs& A_+(\eta)\big(1- ( \frac{1}{\bar{\rho}_+(\eta)\bar{q}_+^2(\eta) }+\frac{\gamma-1}{\gamma \bar{p}_+(\eta)})[\bar{p}(\eta)]
\big),\\
 K_3(\eta)\defs& \frac{1- \bar{M}_+^2(\eta)}{\bar{\rho}_+^2(\eta)\bar{q}_+^3(\eta)}A_+(\eta),\\
K_4(\eta)\defs& \frac{-A_+(\eta)}{\bar{\rho}_+(\eta)\bar{q}_+(\eta)}
\Big(\frac{1}{\bar{\rho}_+(\eta)\bar{q}_+^2 (\eta)} -  \frac{\gamma-1}{\gamma\bar{p}_+(\eta)} \big(\frac{\bar{\rho}_+(\eta)} {\bar{\rho}_-(\eta)} -1 \big)\notag\\
&\qquad \qquad\qquad - \frac{\overline{M}_-^2(\eta)}{\bar{\rho}_-(\eta)\bar{q}_-^2(\eta)}
\big( 1- \big( \frac{1}{\bar{\rho}_+(\eta)\bar{q}_+^2 (\eta) }+\frac{\gamma-1}{\gamma \bar{p}_+(\eta)}\big)[\bar{p}(\eta)] \big)   \Big).\label{K4eta=}
\end{align}
Then it follows from \eqref{dotU-0} that, 
\begin{align}\label{dotsolvability2}
  & - \int_0^{\overline{m}}\int_0^{\mathcal{K}_0} \big(K_1(\eta)  + K_2'(\eta) \big) \delta\theta_-^{(0)} (\xi,\eta)\dif \xi
\dif \eta\notag\\
& + \sigma K_2(\eta) \int_0^{\mathcal{K}_0}\Theta(\xi)\dif \xi +  \sigma A_+(\eta) \int_{\mathcal{K}_0}^L \Theta(\xi)\dif \xi \notag\\
  =& \sigma \int_0^{\overline{m}} K_3(\eta) p_{ex}(\int_{0}^{\eta}\frac{1}{\bar{\rho}_+(t) \bar{q}_+(t)} \dif t)\dif \eta -\sigma \int_0^{\overline{m}}
  K_4(\eta) p_{en}(\eta)
\dif \eta.
\end{align}
Let $K(\eta) \defs K_1(\eta)  + K_2'(\eta) $ and
\begin{align}
  J_1(\xi)\defs &- \int_0^{\overline{m}}\int_0^{\xi} K(\eta) \frac{\delta\theta_-^{(0)}(s,\eta)}{\sigma} \dif s
\dif \eta\notag\\
& +  K_2(\eta) \int_0^{\xi}\Theta(s)\dif s +   A_+(\eta) \int_{\xi}^L \Theta(s)\dif s,\label{J1=eq}\\
J_2\defs & \int_0^{\overline{m}} K_3(\eta) p_{ex}(\int_{0}^{\eta}\frac{1}{\bar{\rho}_+(t) \bar{q}_+(t)} \dif t)\dif \eta - \int_0^{\overline{m}}
  K_4(\eta) p_{en}(\eta)
\dif \eta.
\end{align}

So to show \eqref{dotsolvability} holds is equivalent to show
\begin{align}\label{eq2:J1K0=J2}
  J_1(\mathcal{K}_0) =  J_2.
\end{align}
Then we have the following lemma.
\begin{lem}\label{determinemathcalK0}
 Assume
\begin{align}
&\int_0^{\overline{m}}\frac{K(\eta)}{ \bar{q}_-(\eta) }\big(p_{en}'(\eta)  - \frac{m-\overline{m}}{\sigma \overline{m}}\frac{g}{\bar{q}_-(\eta)}\big)\dif \eta >0,\label{assump1}\\
&0<  L_0< \min\{L_*,\,\,  L \},\label{L0lessmin}
\end{align}
where
$$
L_*\defs \frac{2}{\mathcal{I}}\int_0^{\overline{m}}\frac{K(\eta)}{ \bar{q}_-(\eta) }\big(p_{en}'(\eta)  - \frac{m-\overline{m}}{\sigma \overline{m}}\frac{g}{\bar{q}_-(\eta)}\big)\dif \eta
$$
and $\mathcal{I} \defs C_{\overline{U}_-}\int_0^{\overline{m}}|K(\eta)|
\dif \eta+ \|K_2(\eta) - A_+(\eta)\|_{L^\infty([0,\overline{m}])} \|\Theta''\|_{L^\infty([0,L])}$.
If
\begin{align}\label{eq:<J2<}
  J_1(0) < J_2 <  J_1(0) + \widehat{J}_1(L_0),
\end{align}
where $J_1(0) \defs A_+(\eta) \int_{0}^L \Theta(\tau) \dif \tau$ and
\begin{align*}
 \widehat{J}_1(L_0)\defs &  \frac{L_0^2}{6} \Big( 3\int_0^{\overline{m}}\frac{K(\eta)}{ \bar{q}_-(\eta) }\big( p_{en}'(\eta)  - \frac{m-\overline{m}}{\sigma \overline{m}}\frac{g}{\bar{q}_-(\eta)}\big)\dif \eta -  L_0\mathcal{I}\Big)>0,
\end{align*}
there exists a unique $\mathcal{K}_0\in (0,L_0)$ such that \eqref{eq2:J1K0=J2} holds.
\end{lem}

\begin{proof}
By \eqref{J1=eq}, one has
\begin{align}\label{J1'=}
  J_1'(\xi) =& - \int_0^{\overline{m}}K(\eta) \frac{\delta\theta_-^{(0)} (\xi,\eta)}{\sigma}
\dif \eta +  K_2(\eta) \Theta(\xi) - A_+(\eta) \Theta(\xi).
\end{align}
Applying $\Theta(0)=0$ and $\theta_-^{(0)} (0,\eta)=0$, it is easy to see that
\begin{align}
J_1(0) = A_+(\eta) \int_{0}^L \Theta(\xi) \dif \xi,\,
\quad J_1'(0) =0.
\end{align}
In addition, \eqref{Ldoteq2-} yields that
\begin{align}
&- \partial_{\xi}\int_0^{\overline{m}}K(\eta) \delta\theta_-^{(0)} (\xi,\eta)
\dif \eta\notag\\
=& \int_0^{\overline{m}}K(\eta)
\big(\frac{1}{\bar{q}_-(\eta)}\partial_{\eta}\delta p_-^{(0)}  - \frac{g}{\bar{q}_-^3(\eta)} \delta q_-^{(0)}  - \frac{m-\overline{m}}{\overline{m}}\frac{g}{\bar{q}_-^2(\eta)}\big)\dif \eta,
\end{align}
which implies that
\begin{align}\label{J1xi=}
  J_1''(\xi) =& - \partial_{\xi}\int_0^{\overline{m}}K(\eta) \delta \theta_-^{(0)} (\xi,\eta)
\dif \eta + K_2(\eta) \Theta'(\xi) - A_+(\eta) \Theta'(\xi)\notag\\
=&\int_0^{\overline{m}}K(\eta)\frac{1}{\sigma}\big(\frac{1}{\bar{q}_-(\eta)}\partial_{\eta} \delta p_-^{(0)}  - \frac{g}{\bar{q}_-^3(\eta)}\delta q_-^{(0)}  - \frac{m-\overline{m}}{\overline{m}}\frac{g}{\bar{q}_-^2(\eta)}\big)\dif \eta\notag\\
&  +  \big(K_2(\eta) - A_+(\eta)\big)\Theta'(\xi).
\end{align}
Employing $\Theta'(0) =0$ and \eqref{dotU-0}, \eqref{J1xi=} yields that
\begin{align}
J_1''(0) = \int_0^{\overline{m}}\frac{K(\eta)}{ \bar{q}_-(\eta) }\big(p_{en}'(\eta)  - \frac{m-\overline{m}}{\sigma \overline{m}}\frac{g}{\bar{q}_-(\eta)}\big)\dif \eta.
\end{align}
By applying \eqref{assump1}, one has
\begin{align}\label{I'xi>}
J_1'(\xi)=&
J_1''(0) {\xi} + \frac12 J_1'''(\hat{\xi}) {\xi}^2\notag\\
  =& {\xi} \int_0^{\overline{m}}\frac{K(\eta)}{ \bar{q}_-(\eta) }\big(p_{en}'(\eta)  - \frac{m-\overline{m}}{\sigma \overline{m}}\frac{g}{\bar{q}_-(\eta)}\big)\dif \eta\notag\\
 & -\frac{1}{2} \big( \int_0^{\overline{m}}K(\eta)\frac{\partial_{\xi}^2 \delta\theta_-^{(0)}  (\hat{\xi},\eta)}{\sigma}
\dif \eta - (K_2(\eta) - A_+(\eta) )\Theta''(\hat{\xi})\big) \xi^2\notag\\
\geq &  \xi\int_0^{\overline{m}}\frac{K(\eta)}{ \bar{q}_-(\eta) }\big(p_{en}'(\eta)  - \frac{m-\overline{m}}{\sigma \overline{m}}\frac{g}{\bar{q}_-(\eta)}\big)\dif \eta\notag\\
& - \frac{1}{2} \xi \big( \int_0^{\overline{m}}|K(\eta)|\big|\frac{\partial_{\xi}^2 \delta\theta_-^{(0)} (\hat{\xi},\eta)}{\sigma}\big|
\dif \eta+ \|K_2(\eta) - A_+(\eta)\|_{L^\infty[0,\overline{m}]} \|\Theta''\|_{L^\infty[0,L]} \big) \xi\notag\\
 > & \xi\int_0^{\overline{m}}\frac{K(\eta)}{ \bar{q}_-(\eta) }\big(p_{en}'(\eta)  - \frac{m-\overline{m}}{\sigma \overline{m}}\frac{g}{\bar{q}_-(\eta)}\big)\dif \eta\notag\\
& - \frac12 \xi \big(C_{\overline{U}_-} \int_0^{\overline{m}}|K(\eta)|
\dif \eta+ \|  K_2(\eta) - A_+(\eta)\|_{L^\infty[0,\overline{m}]} \|\Theta''\|_{L^\infty[0,L]} \big) \xi,
  \end{align}
where we use $\big|\frac{\partial_{\xi}^2 \delta\theta_-^{(0)} (\hat{\xi},\eta)}{\sigma}\big| < C_{\overline{U}_-}$(see \eqref{dotU-es}) and $\hat{\xi}\in(0,\xi)$.
Employing \eqref{L0lessmin},
it follows that
\begin{align}
J_1'(\xi) >0, \quad \text{for}\quad \xi\in(0,L_0).
\end{align}
Thus,
\vspace{-0.3cm}
\begin{align*}
&\min \limits_{\xi\in(0,L_0)} J_1(\xi) = J_1(0) = A_+(\eta) \int_{0}^L \Theta(\tau) \dif \tau,\\
&\max\limits_{\xi\in(0,L_0)} J_1(\xi) = J_1(L_0) = \int_{0}^{L_0} J_1'(\xi)\dif \xi + J_1(0)\notag\\
=&\int_0^{L_0}{\xi} \int_0^{\overline{m}}\frac{K(\eta)}{ \bar{q}_-(\eta) }\big(p_{en}'(\eta)  - \frac{m-\overline{m}}{\sigma\overline{m}}\frac{g}{\bar{q}_-(\eta)}\big)\dif \eta\dif \xi + J_1(0)\notag\\
 & \quad -\int_0^{L_0} \frac{\xi^2}{2} \big(\int_0^{\overline{m}}K(\eta)\frac{\partial_{\xi}^2 \delta\theta_-^{(0)} (\hat{\xi},\eta)}{\sigma}
\dif \eta - \big(  K_2(\eta) - A_+(\eta)\big)\Theta''(\hat{\xi}) \big)\dif \xi \notag\\
\geq& J_1(0) + \widehat{J}_1(L_0).
\end{align*}
Therefore, if \eqref{eq:<J2<} holds, there exists a unique $\mathcal{K}_0\in (0,L_0)$ such that \eqref{eq2:J1K0=J2} holds.
\end{proof}
In Lemma \ref{determinemathcalK0}, \eqref{assump1}-\eqref{eq:<J2<} are sufficient conditions for the determination of the approximate shock position $\mathcal{K}_0$. If $\int_0^{\overline{m}}\frac{K(\eta)}{ \bar{q}_-(\eta) }\big(p_{en}'(\eta)-\frac{m-\overline{m}}{\sigma \overline{m}}\frac{g}{\bar{q}_-(\eta)}\big)\dif \eta < 0$, we can also determine the approximate position $\mathcal{K}_0$, as the following lemma shows.
\begin{lem}\label{2suffxi0=}
Assume
\begin{align}
&\int_0^{\overline{m}}\frac{K(\eta)}{ \bar{q}_-(\eta) }\big(p_{en}'(\eta)  - \frac{m-\overline{m}}{\sigma \overline{m}}\frac{g}{\bar{q}_-(\eta)}\big)\dif \eta < 0,\label{assump1rem}\\
&0<  L_0^{\sharp}< \min\{L_*^{\sharp},\,\,  L \},\label{L0lessrem}
\end{align}
where $L_*^{\sharp} \defs \frac{-2}{\mathcal{I}^{\sharp}}\int_0^{\overline{m}}\frac{K(\eta)}{ \bar{q}_-(\eta) }\big(p_{en}'(\eta)  - \frac{m-\overline{m}}{\sigma \overline{m}}\frac{g}{\bar{q}_-(\eta)}\big)\dif \eta$ and
\begin{align*}
&\mathcal{I}^{\sharp}\defs C_{\overline{U}_-}\int_0^{\overline{m}}|K(\eta)|
\dif \eta+ \|K_2(\eta) - A_+(\eta)\|_{L^\infty[0,\overline{m}]} \|\Theta''\|_{L^\infty[0,L]}.
\end{align*}
If
\begin{align}\label{<J2<eq}
J_1(0) + \widehat{J}_1^{\sharp}(L_0)< J_2 < J_1(0),
\end{align}
where $  J_1(0) \defs  A_+(\eta) \int_{0}^L \Theta(\tau) \dif \tau$ and
\begin{align*}
  \widehat{J}_1^{\sharp}(L_0^{\sharp})\defs &  \frac{(L_0^{\sharp})^2}{6} \Big( 3\int_0^{\overline{m}}\frac{K(\eta)}{ \bar{q}_-(\eta) }\big( p_{en}'(\eta)  - \frac{m-\overline{m}}{\sigma \overline{m}}\frac{g}{\bar{q}_-(\eta)}\big)\dif \eta + L_0^{\sharp} \mathcal{I}^{\sharp}\Big)<0,
\end{align*}
there exists a unique $\mathcal{K}_0\in (0,L_0^{\sharp})$ such that \eqref{eq2:J1K0=J2} holds.
\end{lem}

\subsection{Nonlinear iteration scheme}
In this subsection, we will design an iteration scheme based on the solution $(\overline{U}_+(\eta) +\delta U_+^{(0)}(\xi,\eta); (\psi^{(0)})'(\eta), \psi^{(0)}(\overline{m}))$. Define
\begin{equation*}
\begin{array}{rcl}
& & \mathscr{N} (\delta U_+^{(0)}; (\psi^{(0)})', \mathcal{K}_0)\\
&\defs &
\begin{Bmatrix}
( \delta U_+; \psi', \psi(\overline{m}) ):\qquad \eqref{solvabilityhat} \,\, \text{holds}, \qquad |\psi(\overline{m}) - \mathcal{K}_0| \leq C\sigma,\\
\| (\delta p_+ , \delta \theta_+) \|_{1,\alpha;\Omega_+^{\mathcal{K}_0}}^{(-\alpha;\{Q_i\})} + \| (\delta q_+ , \delta S_+) \|_{1,\alpha;\Omega_+^{\mathcal{K}_0}}^{(-\alpha;\{\Gamma_0^{\mathcal{K}_0}\cup \Gamma_{\overline{m}}^{\mathcal{K}_0}\})} + \| \psi'\|_{1,\alpha;\Gamma_s^{\mathcal{K}_0}}^{(-\alpha;\{Q_1, Q_4\})}  \leq C\sigma,\\
\| (\delta p_+ - \delta p_+^{(0)}, \delta \theta_+ - \delta \theta_+^{(0)}) \|_{1,\alpha;\Omega_+^{\mathcal{K}_0}}^{(-\alpha;\{Q_i\})} +  \| (\delta q_+ - \delta q_+^{(0)}, \delta S_+ - \delta S_+^{(0)}) \|_{1,\alpha;\Omega_+^{\mathcal{K}_0}}^{(-\alpha;\{\Gamma_0^{\mathcal{K}_0}\cup \Gamma_{\overline{m}}^{\mathcal{K}_0}\})}\\
+ \| \psi' -(\psi^{(0)})'\|_{C^{1,\alpha}(\Gamma_s^{\mathcal{K}_0})}^{(-\alpha;\{Q_1, Q_4\})} \leq \sigma^{\frac32}
\end{Bmatrix}.
\end{array}
\end{equation*}

It is easy to see that $(\delta U_+^{(0)}; (\psi^{(0)})', \mathcal{K}_0) \in  \mathscr{N} (\delta U_+^{(0)}; (\psi^{(0)})', \mathcal{K}_0)$.

For any $(\delta U_+; \delta \psi',\psi(\overline{m}))\in \mathscr{N} (\delta U_+^{(0)}; (\psi^{(0)})', \mathcal{K}_0)$, define $U(\xi,\eta)=\overline{U}_+(\eta)  + \delta U_+(\xi,\eta)$ and $\psi'(\eta) = \delta\psi'(\eta)$. Let $(\delta {U}_+^*; \delta {\psi^*}', \psi^*(\overline{m}))$ be the solutions of equations \eqref{Rrefeq1}-\eqref{Rrefeq4} with boundary conditions \eqref{As-1H}-\eqref{eq:theta4+re}, where $F_k, (k=1,2,3)$, $\mathcal{H}_j^{\sharp}, (j=1,2,3,4)$ are the functions of $(\delta U_+; \delta \psi', \psi(\overline{m}))$ given by \eqref{defsF1}-\eqref{Lf32=} and \eqref{2.73}-\eqref{alpha4+}. Therefore, we construct a mapping
\begin{align}\label{Pimapping}
  \mathcal{T} : (\delta U_+; \delta \psi',\psi(\overline{m})) \mapsto (\delta {U}_+^*; \delta {\psi^*}'. \psi^*(\overline{m}))
\end{align}
We will show the mapping $\mathcal{T}$ is well defined and contractive on $\mathscr{N} (\delta U_+^{(0)}; (\psi^{(0)})', \mathcal{K}_0)$.


The well-defined of $\mathcal{T}$ is achieved in the following lemma.
\begin{lem}\label{WellP}
  There exists a positive constant $\sigma_1$ with $0 < \sigma_1<1$, such that for any $0 < \sigma \leq \sigma_1$, if $( \delta U_+; \psi', \psi(\overline{m}) )\in \mathscr{N}(\delta U_+^{(0)}; (\psi^{(0)})', \mathcal{K}_0)$, then $(\delta U_+^*; (\psi^*)', \psi^*(\overline{m}) )\in \mathscr{N}(\delta U_+^{(0)}; (\psi^{(0)})', \mathcal{K}_0)$.
\end{lem}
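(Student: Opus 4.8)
The plan is to show that the map $\mathcal{T}$ of \eqref{Pimapping} carries $\mathscr{N}$ into itself. Fix an admissible input $(\delta U_+;\psi',\psi(\overline{m}))\in\mathscr{N}(\delta U_+;\psi',\psi(\overline{m}))$. Recall that $\mathcal{T}$ is defined by: freezing $F_1,F_2,F_3$ and $\mathcal{H}_1^\sharp,\dots,\mathcal{H}_4^\sharp$ (hence $h_1,\dots,h_4$) at the input through \eqref{defsF1}--\eqref{Lf32=} and \eqref{reformulationRH123}--\eqref{h4==}; choosing the constant $\psi^*(\overline{m})$ so that the solvability identity \eqref{solvabilityhat} holds for the frozen data; invoking Theorem \ref{Rthm} to get $(\delta p_+^*,\delta\theta_+^*,\delta q_+^*,\delta S_+^*)$; and setting $(\psi^*)'=h_4$, $\psi^*(\eta)=\psi^*(\overline{m})-\int_\eta^{\overline{m}}(\psi^*)'(s)\,\dif s$. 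I would verify the three defining properties of $\mathscr{N}$ for the output in four steps.

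\emph{Step 1 (the frozen data is admissible).} Expanding each $f_{ij}$ of \eqref{Lf11=}--\eqref{Lf32=}, one separates: (a) the parts linear in $(\sigma p_{en},\sigma p_{ex},\sigma\Theta)$, which are not in $F_k,h_j$ at all but enter the linear problem as prescribed data on $\Gamma_{ex}$, $\Gamma_{\overline{m}}^{\mathcal{K}_0}$, $\Gamma_s^{\mathcal{K}_0}$; (b) genuinely quadratic expressions in $(\delta U_+,\delta U_-)$, which are $O(\sigma^2)$ by the $\mathscr{N}$-bounds and \eqref{dotU-es}--\eqref{itera-}; and (c) the geometric terms carrying a factor $\psi(\overline{m})-\mathcal{K}_0$, $\int_\eta^{\overline{m}}\psi'$ or $\psi'$ from the coordinate straightening, which at leading order are $\eta$-dependent but $\xi$-independent and therefore drop out of the $\partial_\xi$ appearing in the definitions of $F_1$ and $F_3$, leaving an $O(\sigma^2)$ remainder. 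The only $O(\sigma)$ contribution that survives is the shock-front term $\tfrac{g}{\bar\rho_+\bar q_+^3 A_+}(\delta p_+(\mathcal{K}_0,\eta)+\bar\rho_+\bar q_+\delta q_+(\mathcal{K}_0,\eta))$ in $F_2$, which is exactly the term already present in the reference datum $F_2^{(0)}$ of \eqref{letUpsi}--\eqref{letGj=0}. Using the compatibility conditions \eqref{assumexy}--\eqref{CC01=} to control Hölder norms near the corners $Q_i$ and the endpoints of $\Gamma_s^{\mathcal{K}_0}$, one obtains $F_k\in C_{1,\alpha}^{(1-\alpha;\{\overline{\Gamma_0^{\mathcal{K}_0}}\cup\overline{\Gamma_{\overline{m}}^{\mathcal{K}_0}}\})}(\Omega_+^{\mathcal{K}_0})$ and $h_j\in C_{1,\alpha}^{(-\alpha;Q_1\cup Q_4)}(\Gamma_s^{\mathcal{K}_0})$ with $\sum_k\|F_k\|+\sum_j\|h_j\|\le C\sigma$; together with $f_{33}=0$ this is precisely the hypotheses \eqref{Assumpf}--\eqref{assumebry}.

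\emph{Step 2 (solvability and the updated shock position).} This is the step I expect to be the main obstacle. Theorem \ref{Rthm} applies only if \eqref{solvabilityhat} holds for the frozen data, and this must be arranged by the remaining free constant $\psi^*(\overline{m})$, which enters the $f_{ij}$ and the $\Gamma_{\overline{m}}^{\mathcal{K}_0}$-datum. Writing the difference of the two sides of \eqref{solvabilityhat} as a function $\mathcal{G}$ of that constant, a computation parallel to the proof of Lemma \ref{determinemathcalK0} gives $\mathcal{G}(\kappa)=J_1(\kappa)-J_2+O(\sigma^{3/2})$, with $J_1(\mathcal{K}_0)=J_2$ by the definition of $\mathcal{K}_0$ and, on a neighbourhood of $\mathcal{K}_0$, $J_1'(\kappa)$ bounded below by a positive quantity whose leading part is $\kappa\int_0^{\overline{m}}\frac{K(\eta)}{\bar q_-(\eta)}\big(p_{en}'(\eta)-\frac{m-\overline{m}}{\sigma\overline{m}}\frac{g}{\bar q_-(\eta)}\big)\,\dif\eta$ — the large-gravity-dominated mechanism isolated in Lemma \ref{determinemathcalK0}. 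The intermediate value theorem then yields a unique $\psi^*(\overline{m})$ with $|\psi^*(\overline{m})-\mathcal{K}_0|\le C\sigma$ for which \eqref{solvabilityhat} holds. The delicate point is to make the uniform lower bound on $J_1'$ near $\mathcal{K}_0$ and the $O(\sigma^{3/2})$ control of the nonlinear correction to $\mathcal{G}$ hold uniformly over all inputs in $\mathscr{N}$; here \eqref{itera-} for $\delta U_-$ and the $\mathscr{N}$-bounds on $\psi'$ and on $\delta U_+-\delta U_+^{(0)}$ are used.

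\emph{Steps 3--4 (the two norm inequalities).} With solvability secured, Theorem \ref{Rthm} gives a unique $(\delta p_+^*,\delta\theta_+^*,\delta q_+^*,\delta S_+^*)$, and estimate \eqref{pthetaqses} together with Step 1 yields $\|(\delta p_+^*,\delta\theta_+^*)\|+\|(\delta q_+^*,\delta S_+^*)\|+\|(\psi^*)'\|\le C\sigma$, the second defining inequality of $\mathscr{N}$; $(\psi^*)'=h_4$ and the integration formula give the corresponding bound on $\psi^*$. For the last inequality I would subtract the reference problem \eqref{letUpsi}--\eqref{letGj=0} (solvable by Lemmas \ref{lem(0)=} and \ref{determinemathcalK0}) from the present one: $\delta U_+^*-\delta U_+^{(0)}$ solves the same linearized system with data $F_k-F_k^{(0)}$, $h_j-h_j^{(0)}$ and the corresponding boundary differences. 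Each of these is $O(\sigma^2)$: the quadratic parts are $O(\sigma^2)$ by the $\mathscr{N}$-bounds; the $\xi$-independent geometric terms drop out under $\partial_\xi$ in $F_1,F_3$; the surviving shock-front term of $F_2$ is controlled through \eqref{As-1H} by $h_1-h_1^{(0)}$, which is the difference between the nonlinear R.-H. relation and its linearization at $\delta U_-^{(0)}$ and hence $O(\sigma^2)$ by \eqref{itera-}; and the $\Gamma_{ex},\Gamma_{\overline{m}}^{\mathcal{K}_0}$ differences are $O(\sigma^2)$ because the arguments of $p_{ex}$ and $\Theta$ differ by $O(\sigma)$ from those in \eqref{letUpsi}--\eqref{letGj=0}. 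Applying the estimate of Theorem \ref{Rthm} to this difference problem gives $\|\delta U_+^*-\delta U_+^{(0)}\|+\dots\le C\sigma^2\le\sigma^{3/2}$ once $\sigma\le\sigma_1$ for a suitably small $\sigma_1>0$; combined with $|\psi^*(\overline{m})-\mathcal{K}_0|\le C\sigma$ from Step 2 this shows $(\delta U_+^*;(\psi^*)',\psi^*(\overline{m}))\in\mathscr{N}$ and completes the proof.
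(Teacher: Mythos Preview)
Your overall architecture is the same as the paper's, and Steps 1, 3, 4 are correct in outline. The real content of the lemma, however, is Step 2, and there your proposal has a genuine gap.

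You assert that ``a computation parallel to the proof of Lemma \ref{determinemathcalK0}'' gives $\mathcal{G}(\kappa)=J_1(\kappa)-J_2+O(\sigma^{3/2})$. But the free parameter $\psi^*(\overline{m})$ enters the solvability functional \eqref{solvabilityhat} through three \emph{different} channels: (i) the supersonic shock data $h_1$, via $U_-(\psi(\eta),\eta)$; (ii) the argument of $\Theta$ in the upper-wall datum \eqref{eq:theta4+re}; and (iii) the geometric factors $\frac{\psi(\overline{m})-\mathcal{K}_0-\int_\eta^{\overline{m}}\psi'}{L-\psi}$ inside $f_{11},f_{31}$. Only channel (i) is ``parallel'' to Lemma \ref{determinemathcalK0}. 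In the paper's proof, channel (iii) is computed explicitly: the $\xi$-dependent part of those geometric terms gives $\iint F_1=-\frac{\psi^*(\overline{m})-\mathcal{K}_0}{L-\psi^*(\overline{m})}\iint\partial_\eta\bigl(A_+\,\delta\theta_+^{(0)}\bigr)\,d\xi\,d\eta+O(\sigma^{3/2})$, which is of the \emph{same} order $(\psi^*(\overline{m})-\mathcal{K}_0)\sigma$ as the leading contribution from (i). This piece then cancels against part of (ii), and only after that cancellation does the combination (i)+(ii)+(iii) collapse to $\sigma J_1'(\mathcal{K}_0)(\psi^*(\overline{m})-\mathcal{K}_0)$, whence the implicit function theorem (not the intermediate value theorem) yields $|\psi^*(\overline{m})-\mathcal{K}_0|\le C\sigma$. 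Your Step 1 remark that the leading geometric pieces are $\xi$-independent and ``drop out under $\partial_\xi$'' is fine for the crude bound $\|F_1\|\le C\sigma^2$, but it is precisely the next-order $\xi$-dependent remainder that drives channel (iii), and you have not tracked it.

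In short: the identity $\partial_{\psi^*(\overline{m})}\mathcal{G}\big|_{\mathcal{K}_0}=\sigma J_1'(\mathcal{K}_0)$ is the nontrivial computation here, and it is not a repetition of Lemma \ref{determinemathcalK0} but a separate calculation showing that the coordinate-straightening contributions and the wall-condition contributions combine with the shock-data contribution in exactly the right way.
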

\begin{proof}
Because $( \delta U_+; \psi', \psi(\overline{m}) )\in \mathscr{N}(\delta U_+^{(0)}; (\psi^{(0)})', \mathcal{K}_0)$,
it is easy to see that \eqref{Assumpf}-\eqref{assumebry} hold. So by Theorem \ref{Rthm}, if  \eqref{solvabilityhat} holds, we have $(\delta U_+^*; (\psi^*)')$ exists and satisfies
\begin{align}\label{ptheta*es}
&\|(\delta p_+^*,\delta \theta_+^*) \|_{1,\alpha;\Omega_+^{\mathcal{K}_0}}^{(-\alpha;\{Q_i\})} +  \|(\delta q_+^*, \delta S_+^*) \|_{1,\alpha;\Omega_+^{\mathcal{K}_0}}^{(-\alpha;\{\Gamma_0^{\mathcal{K}_0}\cup \Gamma_{\overline{m}}^{\mathcal{K}_0}\})} + \| (\psi^*)'\|_{1,\alpha;\Gamma_s^{\mathcal{K}_0}}^{(-\alpha;\{Q_1,Q_4\})}\notag\\
\leq & C\Big(\sum_{k=1}^3 \|F_k \|_{0,\alpha;\Omega_+^{\mathcal{K}_0}}^{(1-\alpha;\{\Gamma_0^{\mathcal{K}_0}\cup \Gamma_{\overline{m}}^{\mathcal{K}_0}\})}  + \sigma \|p_{ex}\|_{1,\alpha;\Gamma_{ex}}^{(-\alpha;\{Q_2, Q_3\})}\notag\\
 &\qquad + \sigma \|\Theta\|_{C^{2,\alpha}(\Gamma_{\overline{m}}^{\mathcal{K}_0})} +\sum_{j=1}^4 \|h_j\|_{1,\alpha;\Gamma_s^{\mathcal{K}_0}}^{(-\alpha;\{Q_1, Q_4\})} \Big),
\end{align}
where the constant $C$ depends on $\overline{U}_{\pm}$, $L$ and $\alpha$.

Moreover, it is easy to see that for sufficiently small constant $\sigma>0$,
\begin{align}\label{eq013}
&\|\delta p_+^* - \delta p_+^{(0)}\|_{1,\alpha;\Omega_+^{\mathcal{K}_0}}^{(-\alpha;\{Q_i\})}  +  \|\delta \theta_+^* - \delta \theta_+^{(0)}\|_{1,\alpha;\Omega_+^{\mathcal{K}_0}}^{(-\alpha;\{Q_i\})} +  \|\delta q_+^* - \delta q_+^{(0)}\|_{1,\alpha;\Omega_+^{\mathcal{K}_0}}^{(-\alpha;\{\Gamma_0^{\mathcal{K}_0}\cup \Gamma_{\overline{m}}^{\mathcal{K}_0}\})}\notag\\
&+  \|\delta S_+^*-\delta S_+^{(0)}\|_{1,\alpha;\Omega_+^{\mathcal{K}_0}}^{(-\alpha;\{\Gamma_0^{\mathcal{K}_0}\cup \Gamma_{\overline{m}}^{\mathcal{K}_0}\})} +\| (\psi^*)' - (\psi^{(0)})' \|_{1,\alpha;\Gamma_s^{\mathcal{K}_0}}^{(-\alpha;\{Q_1,Q_4\})}\notag\\
  \leq & C \Big(\| F_1 \|_{0,\alpha;\Omega_+^{\mathcal{K}_0}}^{(1-\alpha;\{\Gamma_0^{\mathcal{K}_0}\cup \Gamma_{\overline{m}}^{\mathcal{K}_0}\})}  + \| F_2 -\frac{g}{A_+(\eta)}\frac{m-\overline{m}}{\overline{m}}
  \frac{1}{\bar{q}_+(\eta)} \|_{0,\alpha;\Omega_+^{\mathcal{K}_0}}^{(1-\alpha;\{\Gamma_0^{\mathcal{K}_0}\cup \Gamma_{\overline{m}}^{\mathcal{K}_0}\})}\notag \\
  & \qquad +\|F_3 +B(\overline{U}_+)\|_{0,\alpha;\Omega_+^{\mathcal{K}_0}}^{(1-\alpha;\{\Gamma_0^{\mathcal{K}_0}\cup \Gamma_{\overline{m}}^{\mathcal{K}_0}\})}+ \sum_{j=1}^{4}\| h_j - h_j^{(0)}\|_{1,\alpha;\Gamma_s^{\mathcal{K}_0}}^{(-\alpha;\{Q_1, Q_4\})}\notag \\
  &\qquad +\sigma \| p_{ex}(y(L,\eta;\overline{U}_+ + \delta U_+)) -p_{ex}(y(L,\eta;\overline{U}_+))\|_{1,\alpha;{\Gamma}_{ex}}^{(-\alpha;\{Q_2, Q_3\})} \notag\\
  & \qquad + \sigma \| \Theta\big( \frac{L(\psi(\overline{m}) - \mathcal{K}_0) + (L-\psi(\overline{m}))\xi }{L-\mathcal{K}_0} \big) - \Theta(\xi,\eta)\|_{C^{2,\alpha}(\Gamma_{\overline{m}}^{\mathcal{K}_0})}  \Big)\notag\\
  \leq & C\sigma^2 < \sigma^{\frac32}.
  \end{align}

Next, $\psi^*(\overline{m})$ will be determined from \eqref{solvabilityhat}.
Let
\begin{align}\label{defsJ*=}
  &J_*(\delta {U}_+^*, (\psi^*)',\psi^*(\overline{m}); U_-)\notag\\
  \defs &\int_0^{\overline{m}}\frac{A_+(\eta)}{\bar{\rho}_+(\eta) \bar{q}_+(\eta)} h_1(\mathcal{K}_0, \eta)\dif \eta + A_+(\eta) \int_{\mathcal{K}_0}^L\sigma \Theta \big( \frac{L(\psi^*(\overline{m}) - \mathcal{K}_0) + (L-\psi^*(\overline{m}))\xi }{L-\mathcal{K}_0} \big)\dif \xi\notag\\
 &-\int_0^{\overline{m}}
 \frac{1-\overline{M}_+^2(\eta)}{\bar{\rho}_+^2(\eta) \bar{q}_+^3(\eta)} A_+(\eta) \sigma p_{ex}
   \big(\frac{m}{\overline{m}}\int_{0}^{\eta}\frac{1}{\rho_+ q_+\cos\theta_+ (L,t)} \dif t\big) \dif\eta\notag\\
&+\int_{0}^{\overline{m}}\int_{\mathcal{K}_0}^{L} F_1(\delta U_+^*, (\psi^*)'; \mathcal{K}_0)\dif \xi \dif \eta.
\end{align}
To show \eqref{solvabilityhat} holds,  it suffices to show there exists a unique $\psi^*(\overline{m})$ such that
\begin{align}\label{Solva*=}
  J_*(\delta U_+^*,  (\psi^*)',\psi^*(\overline{m}); U_-) =0,
\end{align}
and
\begin{align}\label{location*}
  |\psi^*(\overline{m}) - \mathcal{K}_0|<C\sigma.
\end{align}
Applying \eqref{dotsolvability}, further calculations yield that
\begin{align}\label{J*=}
  &J_*(\delta {U}_+^*, (\psi^*)',\psi^*(\overline{m}); U_-)\notag\\
=& \int_0^{\overline{m}}\frac{A_+(\eta)}{\bar{\rho}_+(\eta) \bar{q}_+(\eta)} \big(h_1^*(\mathcal{K}_0, \eta) - h_1^{(0)}(\mathcal{K}_0, \eta)\big)\dif \eta\notag\\
& +  A_+(\eta)\sigma \int_{\mathcal{K}_0}^L \Theta \big( \frac{L(\psi^*(\overline{m}) - \mathcal{K}_0) + (L-\psi^*(\overline{m}))\xi }{L-\mathcal{K}_0} \big) - \Theta(\xi)\dif \xi\notag\\
 &-\sigma\int_0^{\overline{m}}
 \frac{1-\overline{M}_+^2(\eta)}{\bar{\rho}_+^2(\eta) \bar{q}_+^3(\eta)} A_+(\eta) \big( p_{ex}
   \big(y(L,\eta;\overline{U}_+ +\delta U_+^*)\big) - p_{ex}(\int_{0}^{\eta}\frac{1}{\bar{\rho}_+(t) \bar{q}_+(t)} \dif t )\big)\dif\eta\notag\\
    &+\int_{0}^{\overline{m}}\int_{\mathcal{K}_0}^{L} F_1(\delta U_+^* , (\psi^*)'; \mathcal{K}_0)\dif \xi \dif \eta.
\end{align}
Obviously,
\begin{align}
J_*(\delta U_+^{(0)}, (\psi^{(0)})',\mathcal{K}_0;\overline{U}_- + U_-^{(0)} ) =0.
\end{align}
 Then we analyze each of the terms in \eqref{J*=}. Firstly, by \eqref{2.73}, \eqref{itera-} and \eqref{eq013}, one has
\begin{align}
&\mathcal{H}_j^{\sharp}(\delta U_+^*,\delta U_-) = \alpha_j^+\cdot \delta U_+^* - \mathcal{H}_j(U_+^*, U_-((\psi^*)', \varphi^*(\overline{m})))\notag\\
   =& \big(\alpha_{j+} \cdot \delta U_+^* +\alpha_{j-} \cdot \delta U_-(\delta \psi',\overline{m}) - G_j (U, U_-((\psi^*)', \varphi^*(\overline{m})))\big)\\
   &-\alpha_{j-} \cdot \big(\delta U_-((\psi^*)', \varphi^*(\overline{m})) - \delta U_-^{(0)}(\mathcal{K}_0,\eta)\big)- \alpha_{j-} \cdot U_-^{(0)}(\mathcal{K}_0,\eta)\notag\\
=&- \alpha_{j-} \cdot \delta U_-^{(0)} (\mathcal{K}_0,\eta) + O(1)\sigma^2.
\end{align}
Further calculations yield that
  \begin{align}
    h_j^*(\mathcal{K}_0, \eta) - h_j^{(0)}(\mathcal{K}_0, \eta) = h_j^{(0)} (\psi^*(\overline{m}),\eta) - h_j^{(0)}(\mathcal{K}_0, \eta) + O(1)\sigma^2.
  \end{align}
Secondly,
  \begin{align}
&\int_{\mathcal{K}_0}^L \Theta \big( \frac{L(\psi^*(\overline{m}) - \mathcal{K}_0) + (L-\psi^*(\overline{m}))\xi }{L-\mathcal{K}_0} \big)- \Theta(\xi)\dif \xi\notag\\
=&\int_{\mathcal{K}_0}^{L}  \Theta(\xi) \dif \xi +\int_{\psi^*(\overline{m})}^{\mathcal{K}_0} \Theta(\xi)\dif \xi + \frac{\psi^*(\overline{m}) - \mathcal{K}_0}{L- \psi^*(\overline{m})}\int_{\psi^*(\overline{m})}^L \Theta(\xi)\dif \xi- \int_{\mathcal{K}_0}^L \Theta(\xi)\dif \xi\notag\\
=& \int_{\psi^*(\overline{m})}^{\mathcal{K}_0} \Theta(\xi)\dif \xi + \frac{\psi^*(\overline{m}) - \mathcal{K}_0}{L- \psi^*(\overline{m})}\int_{\psi^*(\overline{m})}^L \Theta(\xi)\dif \xi,\\
 &p_{ex}
\big(y(L,\eta;\overline{U}_+ +\delta U_+^*)\big) - p_{ex}(\int_{0}^{\eta}\frac{1}{\bar{\rho}_+(t) \bar{q}_+(t)} \dif t )\notag\\
=&p_{ex}
\big(\frac{m}{\overline{m}}\int_{0}^{\eta}\frac{1}{\rho_+^* q_+^*\cos\theta_+^* (L, t)} \dif t \big) - p_{ex}\big( \int_{0}^{\eta}\frac{1}{\bar{\rho}_+(t) \bar{q}_+(t)} \dif t  \big)\notag\\
   =& \int_0^1 p_{ex}' \Big(s \big(\frac{m}{\overline{m}}\int_{0}^{\eta}\frac{1}{\rho_+^* q_+^*\cos\theta_+^* (L, t)} \dif t\big)  + (1-s)  \big( \int_{0}^{\eta}\frac{1}{\bar{\rho}_+(t) \bar{q}_+(t)} \dif t\big) \Big) \dif s\notag\\
   & \times \Big(\frac{m-\overline{m}}{\overline{m}}\int_{0}^{\eta}
   \frac{1}{\rho_+^* q_+^* \cos\theta_+^* (L, t)} \dif t + \int_0^{\eta}\big( \frac{1}{\rho_+^* q_+^*\cos\theta_+^* (L,t)} -  \frac{1}{\bar{\rho}_+(t) \bar{q}_+(t)} \big)  \dif t \Big).
  \end{align}
Thirdly, by \eqref{defsF1}, one has
  \begin{align}
    &F_1 (\delta U_+^*, (\psi^*)'; \mathcal{K}_0)\notag\\
    =& - \frac{\psi^*(\overline{m})-\mathcal{K}_0 - \int_{\eta}^{\overline{m}} (\psi^*)'(t)\dif t}{L -\psi^*(\eta)}A_+(\eta)\partial_\xi \big( \frac{1}{\rho_+^* q_+^* \cos\theta_+^*}\big)\notag\\
    & - \frac{1}{\bar{\rho}_+(\eta)\bar{q}_+^3(\eta)}
    \frac{\psi^*(\overline{m}) - \mathcal{K}_0 -  \int_{\eta}^{\overline{m}} (\psi^*)'(t)\dif t}{L -\psi(\eta)}A_+(\eta)\partial_{\xi} (\frac12 (q_+^*)^2 + i_+^* ) + O(1)\sigma^2\notag\\
    =& - \frac{\psi^*(\overline{m})-\mathcal{K}_0}{L -\psi^*(\eta)}A_+(\eta)\partial_\xi \big( \frac{1}{\rho_+^* q_+^* \cos\theta_+^*}\big) \notag\\
    & - \frac{1}{\bar{\rho}_+(\eta)\bar{q}_+^3(\eta)}
    \frac{\psi^*(\overline{m}) - \mathcal{K}_0 }{L -\psi^*(\eta)}A_+(\eta)\partial_{\xi} (\frac12 (q_+^*)^2 + i_+^*) + O(1)\sigma^2\notag\\
    =&  - \frac{\psi^*(\overline{m})-\mathcal{K}_0}{L -\psi^*(\overline{m})} A_+(\eta)\Big( \partial_\xi \big(-\frac{1}{\bar{\rho}_+^2(\eta)\bar{q}_+(\eta)\bar{c}_+^2(\eta)}
    \delta{p}_+^{(0)} - \frac{1}{\bar{\rho}_+(\eta)\bar{q}_+^2(\eta)}\delta{q}_+^{(0)} \big) \notag\\
    & \qquad \qquad +  \frac{1}{\bar{\rho}_+(\eta)\bar{q}_+^3(\eta)}\partial_{\xi} \big( \frac{1}{\bar{\rho}_+(\eta)}\delta{p}_+^{(0)} + \bar{q}_+ \delta{q}_+^{(0)} + \bar{T}_+(\eta) \delta{S}_+^{(0)}\big)\Big)\notag\\
    & + O(1)\sigma^{\frac32} +O(1)\sigma^2\notag\\
    =&- \frac{\psi^*(\overline{m})-\mathcal{K}_0}{L -\psi^*(\overline{m})}A_+(\eta) \big( \partial_{\eta}\delta{\theta}_+^{(0)} - \frac{g}{\bar{\rho}_+(\eta)\bar{q}_+^3(\eta)}\delta{\theta}_+^{(0)}
    \big) + O(1)\sigma^{\frac32} +O(1)\sigma^2\notag\\
    =& - \frac{\psi^*(\overline{m})-\mathcal{K}_0}{L -\psi^*(\overline{m})}  \partial_{\eta}\big(A_+(\eta) \delta{\theta}_+^{(0)}\big) + O(1)\sigma^{\frac32} +O(1)\sigma^2,
  \end{align}
 where we use $A_+(\eta) =  e^{-g\int_0^\eta
\frac{1}{\bar{\rho}_+(s)\bar{q}_+^3(s)}\dif s}$.
 Thus,
 \begin{align}
 &\int_{0}^{\overline{m}}\int_{\mathcal{K}_0}^{L} F_1(\delta U_+^*, (\psi^*)'; \mathcal{K}_0)\dif \xi \dif \eta \notag\\
 =& - \frac{\psi^*(\overline{m})- \mathcal{K}_0}{L -\psi^*(\overline{m})}  \int_{0}^{\overline{m}}\int_{\mathcal{K}_0}^{L}\partial_{\eta}\big(A_+(\eta) \delta{\theta}_+^{(0)}\big)\dif \xi \dif \eta  + O(1)\sigma^{\frac32} +O(1)\sigma^2\notag\\
 =&- \frac{\psi^*(\overline{m})-{\xi}_0}{L -\psi^*(\overline{m})}  A_+(\eta) \sigma \int_{\mathcal{K}_0}^{L}\Theta(\xi) \dif \xi + O(1)\sigma^{\frac32} +O(1)\sigma^2.
 \end{align}
Therefore, \eqref{J*=} yields that
\begin{align}\label{J*==}
  &J_*(\delta {U}_+^*, (\psi^*)',\psi^*(\overline{m});U_-)\notag\\
  =& \int_0^{\overline{m}}\frac{A_+(\eta)}{\bar{\rho}_+(\eta) \bar{q}_+(\eta)} \partial_{\xi}h_1^{(0)}(\mathcal{K}_0, \eta) \dif \eta\cdot  \big( \psi^*(\overline{m}) - \mathcal{K}_0\big)\notag\\
  & +  A_+(\eta)\sigma \big( \int_{\psi^*(\overline{m})}^{\mathcal{K}_0} \Theta(\xi)\dif \xi + \frac{\psi^*(\overline{m}) - \mathcal{K}_0}{L- \psi^*(\overline{m})}\int_{\psi^*(\overline{m})}^L \Theta(\xi)\dif \xi \big)\notag\\
  & - \frac{\psi^*(\overline{m})- \mathcal{K}_0}{L -\psi^*(\overline{m})}  A_+(\eta) \sigma \int_{\mathcal{K}_0}^{L} \Theta(\xi) \dif \xi + O(1)\sigma^{\frac32} +O(1)\sigma^2\notag\\
  =&\int_0^{\overline{m}}\frac{A_+(\eta)}{\bar{\rho}_+(\eta) \bar{q}_+(\eta)} \partial_{\xi}h_1^{(0)}(\mathcal{K}_0, \eta) \dif \eta\cdot  \big( \psi^*(\overline{m}) - \mathcal{K}_0\big)\notag\\
  & +  A_+(\eta)\sigma \big( \int_{\psi^*(\overline{m})}^{\mathcal{K}_0} \Theta(\xi)\dif \xi + \frac{\psi^*(\overline{m}) - \mathcal{K}_0}{L- \psi^*(\overline{m})}\int_{\psi^*(\overline{m})}^{\mathcal{K}_0} \Theta(\xi)\dif \xi \big)+ O(1)\sigma^{\frac32} +O(1)\sigma^2\notag\\
  =& \int_0^{\overline{m}}\frac{A_+(\eta)}{\bar{\rho}_+(\eta) \bar{q}_+(\eta)} \partial_{\xi}h_1^{(0)}(\mathcal{K}_0, \eta) \dif \eta\cdot  \big( \psi^*(\overline{m}) - \mathcal{K}_0\big)\notag\\
  & -  A_+(\eta)\sigma \Theta(\mathcal{K}_0) \big( \psi^*(\overline{m}) - \mathcal{K}_0  \big)\big(1+  \big( \psi^*(\overline{m}) - \mathcal{K}_0  \big)\big)
  + O(1)\sigma^{\frac32} +O(1)\sigma^2\notag\\
  =& \big(\int_0^{\overline{m}}\frac{A_+(\eta)}{\bar{\rho}_+(\eta) \bar{q}_+(\eta)} \partial_{\xi}h_1^{(0)}(\mathcal{K}_0, \eta) \dif \eta - \sigma A_+(\eta) \Theta(\mathcal{K}_0) \big)\cdot  \big( \psi^*(\overline{m}) - \mathcal{K}_0\big)\notag\\
  & - A_+(\eta)\sigma \Theta(\mathcal{K}_0) \big( \psi^*(\overline{m}) - \mathcal{K}_0  \big)^2 + O(1)\sigma^{\frac32} +O(1)\sigma^2\notag\\
  =& \sigma J_1'(\mathcal{K}_0) \big( \psi^*(\overline{m}) - \mathcal{K}_0\big)+ O(1)\sigma \big( \psi^*(\overline{m}) - \mathcal{K}_0\big)^2 + O(1)\sigma^{\frac32} +O(1)\sigma^2,
  \end{align}
  see \eqref{J1'=} for the definition of $J_1'$.
By Lemma \ref{determinemathcalK0}, \eqref{J*==} implies that
 \begin{align}
   \partial_{\psi^*(\overline{m})} J_*(\delta U_+^{(0)}, (\psi^{(0)})',\mathcal{K}_0;\overline{U}_- + U_-^{(0)} ) =  \sigma J_1^{'} (\mathcal{K}_0) \neq 0.
  \end{align}
Thus, by the implicit function theorem, for sufficiently small positive $\sigma >0$, there exists a unique $\psi^*(\overline{m})$ such that \eqref{Solva*=} and \eqref{location*} hold.
\end{proof}

Theorem \ref{mainthm3} can be proven easily by the Banach fixed point theorem if the mapping $\mathcal{T}$ is contractive on $\mathscr{N}$. It is the following lemma.
\begin{lem}\label{Contra}
  There exists a positive constant $\sigma_2$ with $0<\sigma_2 \ll 1$, such that for any $0< \sigma \leq \sigma_2$, the mapping $\mathcal{T}$ defined on $\mathscr{N} (\delta U_+^{(0)}; (\psi^{(0)})', \mathcal{K}_0)$ is contractive.
 \end{lem}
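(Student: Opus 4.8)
The plan is to establish that $\mathcal{T}$ contracts the natural distance on $\mathscr{N}$; combined with Lemma~\ref{WellP} and the Banach fixed point theorem this will complete the proof of Theorem~\ref{mainthm3}. Fix two iterates $w^{(k)}\defs(\delta U_+^{(k)};(\psi^{(k)})',\psi^{(k)}(\overline m))\in\mathscr{N}$, $k=1,2$, with images $w^{(k)*}\defs\mathcal{T}w^{(k)}$, write $\mathring v\defs v^{(1)}-v^{(2)}$ for any quantity $v$, and measure the distance between iterates by
\begin{align*}
\mathrm{d}(w^{(1)},w^{(2)})\defs\;&\|(\mathring{\delta p_+},\mathring{\delta\theta_+})\|_{1,\alpha;\Omega_+^{\mathcal{K}_0}}^{(-\alpha;\{Q_i\})}
+\|(\mathring{\delta q_+},\mathring{\delta S_+})\|_{1,\alpha;\Omega_+^{\mathcal{K}_0}}^{(-\alpha;\{\overline{\Gamma_0^{\mathcal{K}_0}}\cup\overline{\Gamma_{\overline m}^{\mathcal{K}_0}}\})}\\
&+\|\mathring{\psi}'\|_{1,\alpha;\Gamma_s^{\mathcal{K}_0}}^{(-\alpha;\{Q_1,Q_4\})}+|\mathring{\psi}(\overline m)|.
\end{align*}
The decisive structural point is that the principal part of the reformulated system \eqref{Rrefeq1}--\eqref{Rrefeq4} and the matrix $A_s$ in \eqref{As-1H}--\eqref{h4==} have coefficients depending only on the background $\overline U_\pm$ and on the fixed position $\mathcal{K}_0$, while the whole dependence on the iterate sits in the sources $F_1,F_2,F_3$, the shock data $h_1,\dots,h_4$ and the exit/wall traces. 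Hence $\delta U_+^{(1)*}-\delta U_+^{(2)*}$ solves the \emph{same} linear boundary value problem treated in Theorem~\ref{Rthm}, with $\mathcal{K}\defs\mathcal{K}_0$ and with right-hand sides $\mathring F_k$, $\mathring h_j$ and the differences of the exit pressure and wall-angle data; since each $w^{(k)*}$ satisfies the solvability condition \eqref{solvabilityhat} (this is how $\psi^{(k)*}(\overline m)$ was selected in Lemma~\ref{WellP}), the difference satisfies the corresponding linear solvability condition, so Theorem~\ref{Rthm} applies verbatim and controls the difference of the subsonic parts by $C\big(\sum_k\|\mathring F_k\|+\sum_j\|\mathring h_j\|+\sigma\,\|\text{trace differences}\|\big)$ in the norms entering $\mathrm d$.

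Next I estimate the source differences. Inspecting \eqref{Lf11=}--\eqref{Lf32=}, \eqref{defsF1}, \eqref{2.73}--\eqref{xxl} and \eqref{As-1H}--\eqref{h4==}, every building block is of one of three types: (i) a remainder that is at least quadratic in the perturbations $(\delta U_+,\delta\psi',\delta U_-)$ — recall $\mathcal{H}_j^{\sharp}$ is the quadratic part of the Rankine--Hugoniot relations; (ii) a term carrying one of the small factors $\dfrac{\psi(\overline m)-\mathcal{K}_0-\int_\eta^{\overline m}\psi'}{L-\psi}$, $\dfrac{m-\overline m}{\overline m}$, $\dfrac{\psi'}{L-\psi}$ or $\sigma$; or (iii) the exit trace $p_{ex}\big(y(L,\eta;\overline U_+ +\delta U_+)\big)$, whose argument is Lipschitz in $\delta U_+$ with small constant. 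Moreover $U_-$ is frozen along the iteration up to an $O(\sigma^2)$ error by \eqref{itera-}. Since on $\mathscr{N}$ all perturbations are $O(\sigma)$, it follows that each of $\mathring F_k$, $\mathring h_j$ and the trace differences is bounded by $C\sigma\,\mathrm d(w^{(1)},w^{(2)})$, the $|\mathring\psi(\overline m)|$ part entering only through the $\psi$-dependence of the coordinate change $\pounds_{\mathcal{K}_0}$. Plugging this into the estimate from Theorem~\ref{Rthm} gives
$\|(\mathring{\delta p_+^{*}},\mathring{\delta\theta_+^{*}})\|+\|(\mathring{\delta q_+^{*}},\mathring{\delta S_+^{*}})\|+\|\mathring{(\psi^{*})'}\|\le C\sigma\,\mathrm d(w^{(1)},w^{(2)})$,
i.e.\ the $(\delta U_+,\psi')$-block of $\mathcal{T}$ contracts with factor $O(\sigma)$.

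It remains to handle $\mathring{\psi^{*}}(\overline m)$. Each $\psi^{(k)*}(\overline m)$ is the root of $t\mapsto J_*(\delta U_+^{(k)*},(\psi^{(k)*})',t;U_-)$ from Lemma~\ref{WellP}, and the expansion \eqref{J*==} shows this map has derivative $\sigma J_1'(\mathcal{K}_0)+O(\sigma^{3/2})$, which is bounded away from zero by Lemma~\ref{determinemathcalK0}. The key observation is that, because $U_-$ is fixed and $J_*$ depends on the iterate only through the RH data $h_j$, the source $F_1$ and the exit trace — all of which carry a factor $\sigma$ and enter $J_*$ through combinations whose iterate-dependence is $O(\sigma)\,\mathrm d$ — the implicit function theorem yields $|\mathring{\psi^{*}}(\overline m)|\le C\,\mathrm d(w^{(1)},w^{(2)})$, and, crucially, $\psi^{*}(\overline m)$ feeds back into the wall data \eqref{eq:theta4+re}, into $h_j$ and into $F_k$ again only through $O(\sigma)$-small terms, so the coupling does not spoil the contraction just established; using in addition the a priori closeness of every iterate to $(\delta U_+^{(0)};(\psi^{(0)})',\mathcal{K}_0)$ built into $\mathscr{N}$ (cf.\ \eqref{eq013}), one arrives at $\mathrm{d}(\mathcal{T}w^{(1)},\mathcal{T}w^{(2)})\le \kappa(\sigma)\,\mathrm{d}(w^{(1)},w^{(2)})$ with $\kappa(\sigma)\to0$ as $\sigma\to0$. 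Choosing $\sigma_2\le\sigma_1$ with $\kappa(\sigma_2)<\tfrac12$ finishes the proof.

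The step I expect to be the real obstacle is the second one, upgrading mere boundedness to the $\sigma$-factor for $\mathring F_2$ and $\mathring h_1,\mathring h_2$: these carry the nonlocal operators $\int_{\mathcal{K}_0}^{\xi}(\cdot)\,\dif\tau$ and the gravity-generated lower-order couplings of the elliptic--hyperbolic composite system, and the traces on $\Gamma_s^{\mathcal{K}_0}$ must be estimated in the corner-weighted norms $\|\cdot\|_{1,\alpha;\Gamma_s^{\mathcal{K}_0}}^{(-\alpha;\{Q_1,Q_4\})}$ forced by the optimal $C^{0,\alpha}$ regularity at the shock--wall intersections; entangled with this is the feedback of the shock-position variable $\psi(\overline m)$, whose control requires the non-degeneracy of $\partial_t J_*=\sigma J_1'(\mathcal{K}_0)(1+o(1))$ from Lemma~\ref{determinemathcalK0} to survive the division by $\sigma$, and it is precisely keeping the factor $\sigma$ (not just finiteness) through all of these terms that makes $\mathcal{T}$ a contraction.
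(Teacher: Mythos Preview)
Your overall strategy coincides with the paper's: exploit that the principal part of \eqref{Rrefeq1}--\eqref{Rrefeq4} and the matrix $A_s$ depend only on the background, apply Theorem~\ref{Rthm} to the difference of two iterates, verify that all source and boundary differences carry an extra factor $\sigma$ on $\mathscr{N}$, and close with the implicit function theorem applied to $J_*=0$ for the shock-position component. The paper's proof is in fact far terser than yours --- it merely records \eqref{psi2-1} and \eqref{ees} --- so on the $(\delta U_+,\psi')$-block your write-up is more complete than the original.

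The gap is in how you close the $\psi^*(\overline m)$-component. You correctly compute that $\partial_{\psi^*(\overline m)}J_*=\sigma J_1'(\mathcal{K}_0)(1+o(1))$ while the variation of $J_*$ in the iterate is $O(\sigma)\,\mathrm d$, so the implicit function theorem delivers only $|\mathring{\psi^*}(\overline m)|\le C\,\mathrm d$ with $C$ of order one --- the two $\sigma$'s cancel. Since your distance $\mathrm d$ carries $|\mathring{\psi}(\overline m)|$ with unit weight, this bound alone does \emph{not} give $\mathrm d(\mathcal Tw^{(1)},\mathcal Tw^{(2)})\le\kappa(\sigma)\,\mathrm d$ with $\kappa(\sigma)\to0$; the sentence ``$\psi^*(\overline m)$ feeds back only through $O(\sigma)$-small terms, so the coupling does not spoil the contraction'' is the assertion to be proved, not an argument for it. The paper avoids this by organizing the estimate differently: \eqref{psi2-1} bounds $|\psi^{*(2)}(\overline m)-\psi^{*(1)}(\overline m)|$ not by the input distance but by the \emph{output} traces $\|(\delta U_+^*)^{(2)}-(\delta U_+^*)^{(1)}\|_{1,\alpha;\Gamma_s^{\mathcal{K}_0}}^{(-\alpha;\{Q_1,Q_4\})}+\|(\psi^{*(2)})'-(\psi^{*(1)})'\|_{1,\alpha;\Gamma_s^{\mathcal{K}_0}}^{(-\alpha;\{Q_1,Q_4\})}$; since these have already been shown to be $\le C\sigma\,\mathrm d$ by the linear estimate, substitution gives $|\mathring{\psi^*}(\overline m)|\le C\sigma\,\mathrm d$ and the contraction \eqref{ees} follows. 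To repair your argument either reproduce this two-step bound (first output in terms of input via Theorem~\ref{Rthm}, then $\psi^*(\overline m)$ in terms of output via $J_*=0$), or else work with a weighted distance that assigns $|\mathring{\psi}(\overline m)|$ a small weight so that an $O(1)$-Lipschitz map in that component becomes contractive relative to the genuinely $O(\sigma)$-contracting block.
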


\begin{proof}
Suppose that $(\delta U_+^{(k)}; (\psi^{(k)})', \psi^{(k)}(\overline{m}))\in \mathscr{N} (\delta U_+^{(0)}; (\psi^{(0)})', \mathcal{K}_0)$, $(k=1,2)$.
By Lemma \ref{WellP}, $((\delta U_+^*)^{(k)};  (\psi^{*(k)})', \psi^{*(k)}(\overline{m}))\in \mathscr{N} (\delta U_+^{(0)}; (\psi^{(0)})', \mathcal{K}_0)$.
Applying \eqref{Solva*=}, direct calculations imply that
\begin{align*}
&|\psi^{*(2)} (\overline{m}) - \psi^{*(1)}(\overline{m}) | \notag\\
\leq& C \big(\| (\delta U_+^*)^{(2)} - (\delta U_+^*)^{(1)} \|_{1,\alpha;\Gamma_s^{\mathcal{K}_0}}^{(-\alpha;\{Q_1,Q_4\})} + \|(\psi^{*(2)})' - (\psi^{*(1)})' \|_{1,\alpha;\Gamma_s^{\mathcal{K}_0}}^{(-\alpha;\{Q_1,Q_4\})}\big),
\end{align*}
where the constant $C$ depends on $\overline{U}_{\pm}$, $L$ and $\alpha$.
Then for sufficiently small $\sigma>0$, it holds that
\begin{align*}
&\|((\delta p_+^*)^{(2)} -  ((\delta p_+^*)^{(1)}\|_{1,\alpha;\Omega_+^{\mathcal{K}_0}}^{(-\alpha;\{Q_i\})} +\|((\delta \theta_+^*)^{(2)} -  ((\delta \theta_+^*)^{(1)}\|_{1,\alpha;\Omega_+^{\mathcal{K}_0}}^{(-\alpha;\{Q_i\}}  \notag\\
& +\|((\delta q_+^*)^{(2)} -  ((\delta q_+^*)^{(1)}\|_{1,\alpha;\Omega_+^{\mathcal{K}_0}}^{(-\alpha;\{\Gamma_0^{\mathcal{K}_0}\cup \Gamma_{\overline{m}}^{\mathcal{K}_0}\})} +\|((\delta S_+^*)^{(2)} -  ((\delta S_+^*)^{(1)}\|_{1,\alpha;\Omega_+^{\mathcal{K}_0}}^{(-\alpha;\{\Gamma_0^{\mathcal{K}_0}\cup \Gamma_{\overline{m}}^{\mathcal{K}_0}\})} \notag\\
&+
\|(\delta \psi_{(2)}^*)' - (\delta \psi_{(1)}^*)'\|_{1,\alpha;\Gamma_s^{\mathcal{K}_0}}^{(-\alpha;\{Q_1,Q_4\})}
+ |\psi_{(2)}^*(\overline{m}) - \psi_{(1)}^*(\overline{m})|\notag \\
\leq& \frac12\Big(\|\delta p_+^{(2)}  - \delta p_+^{(1)}\|_{1,\alpha;\Omega_+^{\mathcal{K}_0}}^{(-\alpha;\{Q_i\})}  + \|\delta \theta_+^{(2)}  - \delta \theta_+^{(1)}\|_{1,\alpha;\Omega_+^{\mathcal{K}_0}}^{(-\alpha;\{Q_i\})}  \notag\\
&\quad + \|\delta q_+^{(2)}  - \delta q_+^{(1)}\|_{1,\alpha;\Omega_+^{\mathcal{K}_0}}^{(-\alpha;\{\Gamma_0^{\mathcal{K}_0}\cup \Gamma_{\overline{m}}^{\mathcal{K}_0}\})} + \|\delta S_+^{(2)}  - \delta S_+^{(1)}\|_{1,\alpha;\Omega_+^{\mathcal{K}_0}}^{(-\alpha;\{\Gamma_0^{\mathcal{K}_0}\cup \Gamma_{\overline{m}}^{\mathcal{K}_0}\})} \notag\\
&\quad + \|  \delta \psi_{(2)}' - \delta \psi_{(1)}' \|_{1,\alpha;\Gamma_s^{\mathcal{K}_0}}^{(-\alpha;\{Q_1,Q_4\})}+ |\psi_{(2)}(\overline{m}) - \psi_{(1)}(\overline{m})|\Big).
\end{align*}
So the mapping is contractive.
\end{proof}

\begin{proof}[Proof of Theorem \ref{mainthm3}]
Because the mapping $\mathcal{T}$ is well defined and contractive on $\mathscr{N} (\delta U_+^{(0)}; (\psi^{(0)})', \mathcal{K}_0)$, there exists a fixed point $( \delta U_+; \psi', \psi(\overline{m}) )\in \mathscr{N}(\delta U_+^{(0)}; (\psi^{(0)})', \mathcal{K}_0)$ of $\mathcal{T}$. Let $\psi=\psi(\overline{m})+\int_0^{\eta}\psi'(\eta)d\eta$. So it is easy to see that $( \delta U_+; \psi)$ is a solution in the sense of Definition \ref{RSP}. With the definition of $\mathscr{N} (\delta U_+^{(0)}; (\psi^{(0)})', \mathcal{K}_0)$ and the unique existence of the supersonic solution $U_-$ ahead of the shock front can be easily established by employing the method of characteristic line (cf. Section 2 - Section 3 of the book \cite{LY1985}),Theorem \ref{mainthm3} follows.
\end{proof}

{\bf Acknowledgments.}
The research of Beixiang Fang was supported in part by Natural Science
Foundation of China under Grant Nos. 12331008 and 11971308, and the Fundamental Research Funds for the Central Universities. The research of Xin Gao was supported in part by
Scholarship of Shanghai Jiao Tong University.
The research of Wei Xiang was supported in part by the
Research Grants Council of the HKSAR, China (Project No. CityU 11304820, CityU 11300021, CityU 11311722 and CityU 11305523).
The research of Qin Zhao was supported in part by Natural Science Foundation of China under Grant Nos. 12101471 and 12272297.

\end{document}